\theoremstyle{plain} 
\newtheorem{theorem}{Theorem}[section]  
\newtheorem{lemma}[theorem]{Lemma}
\newtheorem{corollary}[theorem]{Corollary}
\newtheorem{proposition}[theorem]{Proposition}
\newtheorem{sublemma}[theorem]{Sublemma}
\theoremstyle{definition} 
\newtheorem{definition}[theorem]{Definition}
\newtheorem{remark}[theorem]{Remark}
\newcommand{\affine}{\mathbb{C}}
\newcommand{\const}{\mathrm{const}}
\newcommand{\norm}[1]{\left|\!\left|#1\right|\!\right|}
\newcommand{\ad}{\mathrm{ad}}
\newcommand{\supp}{\mathrm{supp}}
\newcommand{\vol}{\mathrm{vol}}
\newcommand{\moduli}{\mathcal{M}}
\newcommand{\conf}{\mathrm{Conf}}
\begin{document}

\title[An open four-manifold having no instanton]
{An open four-manifold having no instanton} 

\author[M. Tsukamoto]{Masaki Tsukamoto}

\date{\today}

\keywords{Yang-Mills theory, instanton, open four-manifold, infinite connected sum}

\subjclass[2000]{53C07}

\maketitle

\begin{abstract}
Taubes proved that all compact oriented four-manifolds admit non-flat instantons.
We show that there exists a non-compact oriented four-manifold having no non-flat instanton.
\end{abstract}

%%%%%%%%%%%%%%%%%%%%%%%%%%%%%%%%%%%%%%%%%%%%%%%%%%%%%%%%%%%%%%%%%%%%%%%%%%%%%%%%%%%%%%%%%
%%%%%%%%%%%%%%%%%%%%%%%%%%%%%%%%%%%%%%%%%%%%%%%%%%%%%%%%%%%%%%%%%%%%%%%%%%%%%%%%%%%%%%%%%

\section{introduction}\label{section: introduction}
Taubes \cite{Taubes} proved that all compact oriented Riemannian $4$-manifolds admit non-flat instantons.
To be precise, if $X$ is a compact oriented Riemannian $4$-manifold then 
there exists a principal $SU(2)$-bundle $E$ on $X$ which admits a non-flat anti-self-dual (ASD) connection.
(Taubes \cite{Taubes} considered self-dual connections.
But recently people usually study anti-self-dual ones.
So we consider anti-self-dual connections in this paper.)
The purpose of this paper is to show that an analogue of this striking existence theorem does \textit{not} hold for 
general non-compact $4$-manifolds.

Let $(\affine P^2)^{\sharp \mathbb{Z}}$ be the connected sum of the infinite copies of the complex projective plane
$\affine P^2$ indexed by integers.
(The precise definition of this infinite connected sum will be given in Section \ref{subsection: construction}.)
$(\affine P^2)^{\sharp \mathbb{Z}}$ is a non-compact oriented $4$-manifold.
\begin{theorem} \label{thm: main theorem in introduction}
There exists a complete Riemannian metric $g$ on $(\affine P^2)^{\sharp\mathbb{Z}}$ satisfying the following.
If $A$ is a $g$-ASD connection on 
a principal $SU(2)$-bundle over $(\affine P^2)^{\sharp \mathbb{Z}}$ satisfying 
\begin{equation} \label{eq: finite energy condition in the introduction}
 \int_X |F_A|_g^2d\vol_g < +\infty,
\end{equation}
then $A$ is flat.
Here $F_A$ is the curvature of $A$. $|\cdot|_g$ and $d\vol_g$ are the norm and the volume form with respect to 
the metric $g$. 
A connection $A$ is said to be $g$-ASD if it satisfies $*_g F_A = -F_A$ where $*_g$ is the Hodge star with respect to $g$.
\end{theorem}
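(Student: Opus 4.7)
The plan is to construct $g$ so that $(\affine P^2)^{\sharp \mathbb{Z}}$ presents itself as a chain of Fubini--Study copies of $\affine P^2$ connected by long, essentially cylindrical necks, and then to combine Uhlenbeck's removable singularity theorem with a careful gluing/matching analysis across the necks to force any finite-energy ASD connection to be flat on every summand.

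Concretely, on the $n$-th copy of $\affine P^2$ I would fix two disjoint small balls around points $p_n, q_n$, excise them, and glue the resulting boundary $S^3$'s to the neighbors via tubes of the form $(S^3 \times [0, L_n], ds^2 + r_n(s)^2 g_{S^3})$ with the neck lengths $L_n$ chosen to grow rapidly and the radii $r_n$ arranged so that the result is a complete smooth metric $g$. Given a $g$-ASD connection $A$ on some $SU(2)$-bundle with $\int_X |F_A|_g^2 d\vol_g < \infty$, the integrability of the total energy forces the $L^2$-norm of $F_A$ in the mid-section of every sufficiently long neck to be arbitrarily small. A standard application of Uhlenbeck's removable singularity theorem (after conformally rescaling each neck end to a punctured ball) then extends $A$ to an ASD connection $A_n$ on an $SU(2)$-bundle $E_n \to \affine P^2_n$ with respect to the Fubini--Study metric. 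Chern--Weil yields $\int_{\affine P^2_n} |F_{A_n}|^2 d\vol = 8\pi^2 k_n$ with $k_n = c_2(E_n) \in \mathbb{Z}_{\geq 0}$, and summing up gives $\sum_n k_n < \infty$, so $k_n = 0$ for all but finitely many $n$.

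The main obstacle is to upgrade this to $k_n = 0$ for \emph{every} $n$ --- i.e., to rule out the Taubes-type instantons that do exist on each compact summand individually. My plan is to exploit the rigidity of ASD connections on long cylinders: each such connection converges exponentially (as $L_n \to \infty$) to a flat --- hence, since $\pi_1(S^3) = 0$, trivial --- connection on $S^3$, and the asymptotic data of this convergence (not just the flat limit but the subleading correction, with its framing) must be compatible on the two sides of every neck with the Taylor data of the extensions $A_n$ and $A_{n+1}$ at the corresponding removed points. By choosing the gluing data (neck lengths $L_n$, neck-point locations, and possibly relative gauge twists) generically, one should arrange that no non-trivial framed instanton on any $\affine P^2_n$ can be matched to its two neighbors simultaneously, forcing $k_n = 0$. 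The hardest step will be to make this matching argument quantitative enough to handle the infinite chain: one needs uniform exponential-decay estimates for ASD connections on cylinders, a precise dimension count for the framed moduli spaces $\moduli_k(\affine P^2)$, and a genericity argument robust under the countably many simultaneous gluing constraints --- the rigidity must propagate along the chain and block any non-flat configuration from closing up at infinity.
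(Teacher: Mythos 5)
Your first two paragraphs run parallel to Section 4 of the paper (decay of finite-energy instantons along the necks, convergence to a flat limit on the cross-sectional $S^3$'s, and the resulting quantization $\tfrac{1}{8\pi^2}\int_X|F_A|^2=\sum_n k_n\in\mathbb{Z}_{\geq 0}$); modulo the small inaccuracy that the extension over the excised balls is a cut-off to the trivial connection rather than a literal application of the removable singularity theorem (so the $A_n$ are adapted connections, not ASD connections on $\affine P^2$), this part is sound and is essentially the paper's classification of adapted connections.

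The third paragraph, however, contains the genuine gap, and it is the heart of the matter. Your proposed mechanism --- that the asymptotic matching conditions across each neck are generically \emph{unsolvable} for non-trivial framed instantons --- is the opposite of what gluing theory says. Taubes--Donaldson gluing shows that, given instantons on the summands adjacent to a long neck, the matching data (flat limit, framing, gluing parameter in $SU(2)$) can generically be \emph{solved}, producing an ASD connection on the connected sum; this is precisely how Taubes proves the existence theorem you are trying to contradict. A dimension count confirms this: the framed charge-$k$ moduli space on $\affine P^2$ has positive dimension $8k-3$ for $k\geq 1$, and gluing across a neck introduces a free $SU(2)$ parameter rather than a codimension-$3$ constraint, so no finite collection of neck conditions, however generically twisted, rules out $k_n\neq 0$. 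The true obstruction is global, not per-neck: each of the \emph{infinitely many} summands contributes (via $b_+(\affine P^2)=1$) to the cokernel of the linearized operator, making it infinite dimensional while the kernel stays finite dimensional, so the formal dimension of the moduli space is $-\infty$. Making this rigorous requires the machinery your proposal omits: a weighted Sobolev Fredholm theory on the infinite connected sum, Atiyah's $\Gamma$-index theorem to exhibit the infinite-dimensional space of $L^2$ self-dual harmonic forms, the exclusion of reducible instantons (using $b_-(\affine P^2)=0$, without which the perturbation argument breaks down), and a Freed--Uhlenbeck conformal perturbation on a fixed compact set combined with Sard--Smale and Baire category over the countably many adapted classes. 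Varying only neck lengths, attaching points, and gauge twists gives far too small a perturbation space, and ``rigidity propagating along the chain'' is not a mechanism that can be made to work.
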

For a more general and precise statement, see Theorem \ref{thm: main theorem}.
As far as I know, this is the first example of oriented Riemannian 
$4$-manifolds which cannot admit any non-flat instanton.
\begin{remark}
I think that the following question is still open:
Is there an oriented Riemannian $4$-manifold which does not have any non-flat ASD connection (not 
necessarily satisfying the finite energy condition (\ref{eq: finite energy condition in the introduction}))?
We studied infinite energy ASD connections and their infinite dimensional moduli spaces
in \cite{Matsuo-Tsukamoto}, \cite{Tsukamoto 1}, \cite{Tsukamoto 2}.
\end{remark}
A naive idea toward the proof of Theorem \ref{thm: main theorem in introduction} is as follows.
Let $g$ be a Riemannian metric on $(\affine P^2)^{\sharp \mathbb{Z}}$.
For each integer $n\geq 0$, let $M(n,g)$ be the moduli space of $SU(2)$ $g$-ASD connections on $(\affine P^2)^{\sharp \mathbb{Z}}$ 
satisfying $\int_{(\affine P^2)^{\sharp \mathbb{Z}}} |F_A|_g^2 d\vol_g =8\pi^2 n$.
We have $b_1((\affine P^2)^{\sharp \mathbb{Z}})=0$ and, formally, $b_+((\affine P^2)^{\sharp \mathbb{Z}}) = +\infty$.
Therefore, if we formally apply the usual virtual dimension formula 
\cite[Section 4.2.5]{Donaldson-Kronheimer} to $M(n,g)$, we get 
\[ \dim M(n,g) = 8n -3(1-b_1((\affine P^2)^{\sharp \mathbb{Z}})+b_+((\affine P^2)^{\sharp \mathbb{Z}})) = 8n -\infty = -\infty.\]
This suggests the following observation:
If we can achieve the transversality of the moduli spaces $M(n,g)$ by choosing the metric $g$ sufficiently generic,
then all $M(n,g)$ $(n\geq 1)$ become empty.
($M(0,g)$ is the moduli space of flat $SU(2)$ connections, and it does not depend on the choice of a Riemannian metric.)

\textbf{Acknowledgement.}
I wish to thank Professor Kenji Fukaya most sincerely for his help and encouragement.
I was supported by Grant-in-Aid for Young Scientists (B) (21740048).

\section{Infinite connected sum} \label{section: infinite connected sum}
\subsection{Construction}\label{subsection: construction}
Let $Y$ be a simply-connected compact oriented $4$-manifold. 
Let $x_1,x_2\in Y$ be two distinct points, and set $\hat{Y}:=Y \setminus \{x_1,x_2\}$.
Choose a Riemannian metric $h$ on $\hat{Y}$ which becomes a tubular metric on the end (i.e. around $x_1$ and $x_2$).
This means that there is a compact set $K\subset \hat{Y}$ such that $\hat{Y} \setminus K =Y_- \sqcup Y_+$ with
$Y_- = (-\infty, -1)\times S^3$ and $Y_+ =(1,+\infty)\times S^3$. 
Here ``$=$'' means that they are isomorphic as oriented Riemannian manifolds.
($S^3 = S^3(1) = \{x\in \mathbb{R}^4|\, |x|=1\}$ is endowed with the Riemannian metric induced by the 
standard Euclidean metric on $\mathbb{R}^4$.)
We can suppose that there is a smooth function $p:\hat{Y} \to \mathbb{R}$ satisfying the following conditions:
$p(K) = [-1,1]$. 
$p$ is equal to the projection to $(-\infty,-1)$ on $Y_-= (-\infty,-1)\times S^3$,
and $p$ is equal to the projection to $(1,+\infty)$ on $Y_+= (1,+\infty)\times S^3$.
For $T>2$, we set $Y_T := p^{-1}(-T+1,T-1) =(-T+1, -1)\times S^3\cup K\cup (1,T-1)\times S^3$.
(Later we will choose $T$ large.)

Let $Y^{(n)}$ be the copies of $Y$ indexed by integers $n\in \mathbb{Z}$.
We denote $K^{(n)}$, $Y_-^{(n)}$, $Y_+^{(n)}$, $p^{(n)}$, $Y^{(n)}_T$ as the copies of
$K$, $Y_-$, $Y_+$, $p$, $Y_T$.
($K^{(n)}, Y^{(n)}_-, Y^{(n)}_+, Y^{(n)}_T \subset Y^{(n)}$ and $p^{(n)}:Y^{(n)}\to \mathbb{R}$.)
We define $X = Y^{\sharp \mathbb{Z}}$ by 
\[ X := \bigsqcup_{n\in \mathbb{Z}} Y_T^{(n)} /\sim ,\]
where we identify $Y^{(n)}_T \cap Y^{(n)}_+$ with $Y^{(n+1)}_T \cap Y^{(n+1)}_-$ by 
\begin{equation} \label{eq: definition of infinite connected sum}
 \begin{split}
 Y^{(n)}_T \cap Y^{(n)}_+ = (1,T-1)\times S^3 &\ni (t, \theta) \\
  &\sim (t-T, \theta)\in  (-T+1, -1) \times S^3
   = Y^{(n+1)}_T \cap Y^{(n+1)}_-.
 \end{split}
\end{equation}
We define $q:X\to \mathbb{R}$ by setting $q(x) := nT +p^{(n)}(x)$ on $Y^{(n)}_T$.
This is compatible with the above identification (\ref{eq: definition of infinite connected sum}).
The identification (\ref{eq: definition of infinite connected sum}) is an orientation preserving isometry.
Hence $X$ has an orientation 
and a Riemannian metric which coincide with the given ones over $Y_T^{(n)}$.
We denote the Riemannian metric on $X$ (given by this procedure) by $g_0$. 
$g_0$ depends on the Riemannian metric $h$ on $Y$ and the parameter $T$.

Since $Y$ is simply-connected, $X$ is also simply-connected.
The homology groups of $X$ are given as follows:
\begin{equation*}
 H_0(X) =\mathbb{Z},\quad H_1(X) = 0, \quad H_2(X) \cong H_2(Y)^{\oplus\mathbb{Z}},  \quad
 H_3(X) = \mathbb{Z},\quad  H_4(X)=0.
\end{equation*}
$H_2(X)$ is of infinite rank if $b_2(Y)\geq 1$.
For every $n\in \mathbb{Z}$, the inclusion $Y^{(n)}_T\cap Y^{(n)}_+ \subset X$ induces an isomorphism 
$H_3(Y^{(n)}_T\cap Y^{(n)}_+) \cong H_3(X)$.
The fundamental class of the cross-section $S^3\subset Y^{(n)}_T\cap Y^{(n)}_+ = (1,T-1)\times S^3$ becomes a generator of $H_3(X)$.

\subsection{Statement of the main theorem}\label{subsection: statement of the main theorem}
Theorem \ref{thm: main theorem in introduction} in Section \ref{section: introduction} follows from the following theorem.
\begin{theorem} \label{thm: main theorem}
Suppose $b_-(Y)=0$ and $b_+(Y)\geq 1$.
If $T$ is sufficiently large, then there exists a complete Riemannian metric $g$ on 
$X = Y^{\sharp \mathbb{Z}}$ satisfying the following conditions (a) and (b).

\noindent 
(a) $g$ is equal to the periodic metric $g_0$ (defined in Section \ref{subsection: construction}) outside a compact set.

\noindent 
(b) If $A$ is a $g$-ASD connection on a principal $SU(2)$ bundle $E$ on $X$ satisfying 
\begin{equation} \label{eq: finite energy condition in the statement of the main theorem}
 \int_X |F_A|^2_g d\vol_g < \infty,
\end{equation}
then $A$ is flat.
\end{theorem}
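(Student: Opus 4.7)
The plan is to prove emptiness of each moduli space $M(n,g)$ of $SU(2)$ $g$-ASD connections with $\int_X |F_A|_g^2\,d\vol_g = 8\pi^2 n$ for $n \geq 1$, via parametric transversality on large compact truncations of $X$. First, $M(0,g)$ consists only of the trivial flat connection (since $X$ is simply connected, any $SU(2)$ bundle admitting a flat connection is trivial), and so is automatically metric-independent. Uhlenbeck's removable singularity theorem, applied after compactifying each of the two ends of each factor $Y^{(k)}$ via the marked points $x_1,x_2$, yields the standard quantization $\int_X |F_A|^2_g\,d\vol_g \in 8\pi^2 \mathbb{Z}_{\geq 0}$ for any finite-energy $SU(2)$ ASD connection, so only $n \geq 1$ needs to be ruled out. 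In the Banach manifold $\mathcal{G}$ of complete metrics on $X$ equal to $g_0$ outside a compact set, it suffices by Baire category to show, for each fixed $n \geq 1$, that $\{g \in \mathcal{G} : M(n,g) = \emptyset\}$ is residual.

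The central device is a truncation. For each integer $N \geq 1$, I construct a closed oriented $4$-manifold $X_N$ by cutting $X$ along the two cross-sections $\{q = \pm(N+\tfrac12)T\}$ and capping off with two round $4$-balls, so that $X_N$ contains the central block $Y^{(-N)}\sharp\cdots\sharp Y^{(N)}$. A Mayer--Vietoris computation gives $\pi_1(X_N) = 1$, $b_-(X_N) = 0$, and $b_+(X_N) \geq (2N+1)\,b_+(Y)$. The $SU(2)$ virtual dimension on $X_N$ with $c_2 = n$ is therefore bounded by
\begin{equation*}
  8n - 3\bigl(1 + b_+(X_N)\bigr) \;\leq\; 8n - 3\bigl(1 + (2N+1)\,b_+(Y)\bigr),
\end{equation*}
which is negative once $N$ is large relative to $n$. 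Standard parametric transversality then guarantees that for metrics in a residual subset of those extending $g_0$ outside $X_N$ and equal to a fixed cap metric on the caps, the $c_2 = n$ moduli space on $X_N$ is empty; reducibles are excluded because $b_-(X_N) = 0$ and $b_+(X_N) > 0$.

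The crux of the proof is transferring this emptiness back to $X$. Suppose $A$ is a finite-energy $g$-ASD connection on $X$ with $c_2 = n \geq 1$. Since $\sum_{k \in \mathbb{Z}} \int_{Y^{(k)}_T} |F_A|^2 \leq 8\pi^2 n$, at most $n/\epsilon$ of the necks $(1,T-1)\times S^3$ can carry energy $\geq \epsilon$. For $\epsilon$ sufficiently small and $T$ sufficiently large, Uhlenbeck's $\epsilon$-regularity on a low-energy long neck forces $A$ to be close, in a suitable gauge, to the trivial flat connection on a middle sub-cylinder (using simple-connectedness of $S^3$). Cutting $A$ there and inserting the trivial connection on a $D^4$ cap produces a connection $A'$ on $X_N$ that is ASD outside a small gluing region and whose self-dual curvature $F_{A'}^+$ has arbitrarily small $L^2$ norm. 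The standard contraction-mapping argument (Donaldson--Kronheimer, Chapter~7) then perturbs $A'$ to a genuine ASD connection on $X_N$ with the same $c_2 = n$, contradicting the emptiness established in the previous step.

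The main obstacle is this gluing/approximation step, together with its uniformity: the truncation index $N$ must be chosen depending only on $n$, $g$, and $\epsilon$, independently of the hypothetical $A$, so that a single closed manifold $X_N$ captures every candidate connection with $c_2 = n$. The uniform energy bound $8\pi^2 n$ delivers exactly this, since the number of necks carrying energy $\geq \epsilon$ is bounded by $n/\epsilon$. A diagonal Baire argument over $n \geq 1$ then produces a single $g \in \mathcal{G}$ for which $M(n,g) = \emptyset$ holds simultaneously for every $n \geq 1$, completing the proof.
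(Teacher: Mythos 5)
Your strategy---truncate $X$ to closed manifolds $X_N\cong Y^{\sharp(2N+1)}$ with $b_+(X_N)=(2N+1)b_+(Y)$, empty out the $c_2=n$ moduli space there by negative virtual dimension and genericity, and then transport a hypothetical finite-energy instanton on $X$ into an instanton on $X_N$---is genuinely different from the paper, which never leaves the non-compact manifold: it proves exponential decay of finite-energy instantons toward adapted connections, sets up weighted Sobolev spaces on $X$ in which the linearization $D_A$ has finite-dimensional kernel, closed range and \emph{infinite}-dimensional cokernel, and runs a Freed--Uhlenbeck/Sard--Smale perturbation of the conformal structure directly on $X$. The problem is that your transfer step has a genuine gap, and it is not a technicality. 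After cutting $A$ on a low-energy neck and capping, you obtain $A'$ on $X_N$ with $\norm{F^+(A')}_{L^2}$ small, and you invoke the contraction-mapping gluing argument to deform $A'$ to an exact ASD connection. That argument requires a uniformly bounded right inverse for $d_{A'}^+$ on the Coulomb slice, i.e.\ vanishing (or uniform smallness) of the obstruction space $H^2_{A'}$, whose dimension here is at least $3\bigl(1+b_+(X_N)\bigr)-8n$ --- exactly the quantity you arranged to be enormous. The linearization is very far from surjective, the equation $F^+(A'+a)=0$ carries an obstruction valued in a huge space with no reason to vanish, and so no exact solution, hence no contradiction, is produced. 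Indeed, if ``small self-dual curvature implies a nearby ASD connection'' held on closed $4$-manifolds, Taubes' grafting construction would yield $c_2=1$ instantons for every generic metric with $b_+>0$, contradicting the very emptiness you established in the previous step. The argument therefore collapses precisely at its crux; the large cokernel has to be \emph{used} (as the target of a parametrized Sard--Smale argument, which is what the paper does), not circumvented by gluing into compact pieces.

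Two secondary points. First, the charge quantization $\int_X|F_A|_g^2\,d\vol_g\in 8\pi^2\mathbb{Z}_{\ge 0}$ does not follow from Uhlenbeck's removable singularity theorem ``at the marked points $x_1,x_2$'': the two ends of $X$ are end-periodic (each passes through infinitely many copies of $Y$), not punctured balls or single cylinders, and establishing integrality is exactly what the paper's decay estimates and the classification of adapted connections (Proposition \ref{prop: classification of adapted connections}, Corollary \ref{cor: existence of adapted connection to which instanton converges exponentially}) are for. Second, your Baire argument quantifies over a space of metrics whose perturbation region grows with $N(n)$, so the ambient ``Banach manifold $\mathcal{G}$'' is really an increasing union of Banach manifolds and residuality is not preserved across levels; this is repairable by perturbing the conformal structure only on one fixed small open set and invoking unique continuation, as in Section \ref{subsection: metric perturbation}, but it needs to be said.
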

The proof of this theorem will be given in Section \ref{section: proof of the main theorem}.
\begin{remark}\label{remark: remark on main theorem}
(i) If a Riemannian metric $g$ on $X$ satisfies the condition (a), then it is complete.

\noindent
(ii)
From the condition (a), 
the above (\ref{eq: finite energy condition in the statement of the main theorem}) is
equivalent to 
\[ \int_X |F_A|^2_{g_0} d\vol_{g_0} < \infty.\]
\noindent
(iii) Since $X$ is non-compact, all principal $SU(2)$-bundles on it are isomorphic to the 
product bundle $X\times SU(2)$.
Hence we can assume that the principal $SU(2)$-bundle $E$ in the condition (b)
is equal to the product bundle $X\times SU(2)$.
\end{remark}

\subsection{Ideas of the proof of Theorem \ref{thm: main theorem}} \label{subsection: idea of the proof of main theorem}
In this subsection we explain the ideas of the proof of Theorem \ref{thm: main theorem}.
Here we ignore several technical issues.
Hence the real proof is different from the following argument in many points.

Let $g$ be a Riemannian metric on $X$ which is equal to $g_0$ outside a compact set.
Let $E=X\times SU(2)$ be the product principal $SU(2)$-bundle over $X$.
If a $g$-ASD connection $A$ on $E$ satisfies 
$\int_X |F_A|_g^2d\vol_g < \infty$, then 
we can show that $\frac{1}{8\pi^2}\int_X|F_A|_g^2 d\vol_g$ is a non-negative integer.
For each integer $n\geq 0$, we define 
$M(n,g)$ as the moduli space of $g$-ASD connections $A$ on $E$ satisfying 
$\frac{1}{8\pi^2}\int_X|F_A|_g^2d\vol_g = n$.
Take $[A]\in M(n,g)$.
We want to study a local structure of $M(n, g)$ around $[A]$.

Set $D_A := -d_A^{*}+ d_A^{+_g} :\Omega^1(\ad E)\to (\Omega^0\oplus \Omega^{+_g})(\ad E)$.
Here $d_A^{*}$ is the formal adjoint of $d_A:\Omega^0(\ad E)\to \Omega^1(\ad E)$ with respect to $g_0$, and
$d_A^{+_g}$ is the $g$-self-dual part of $d_A:\Omega^1(\ad E) \to \Omega^2(\ad E)$.
(Indeed we need to use appropriate weighted Sobolev spaces, and the definition of $D_A$ should be 
modified with the weight. But here we ignore these points.)
The equation $d_A^{*}a=0$ for $a\in \Omega^1(\ad E)$ is the Coulomb gauge condition, and the equation 
$d_A^{+_g}a=0$ is the linearization of the ASD equation $F^{+_g}(A+a)=0$. 
Therefore we expect that we can get an information on the local structure of $M(n, g)$ 
from the study of the operator $D_A$.
The most important point of the proof is to show the following three properties of $D_A$.
(In other words, we need to choose an appropriate functional analysis setup in order to establish these 
properties.)

\noindent 
(i) The kernel of $D_A$ is finite dimensional.

\noindent 
(ii) The image of $D_A$ is closed in $(\Omega^0\oplus \Omega^{+_g})(\ad E)$.

\noindent 
(iii) The cokernel of $D_A$ is infinite dimensional.

Then the local model (i.e. the Kuranishi description) of $M(n,g)$ around $[A]$ is given by the zero set of a 
map 
\[ f: \mathrm{Ker}D_A\to \mathrm{Coker} D_A.\]
(Rigorously speaking, the map $f$ is defined only in a small neighborhood of the origin.)
From the conditions (i) and (iii), this is a map from the finite dimensional space to the infinite dimensional one.
Therefore (we can hope that) if we perturb the map $f$ appropriately, then the zero set disappear. 
The parameter $g$ gives sufficient perturbation, and we can prove that $M(n,g)$ is empty for $n\geq 1$ and generic $g$.

\textbf{Organization of the paper:}
In Section \ref{subsection: anti-self-duality and conformal structure}, 
we review the basic facts on anti-self-duality and conformal structure.
In the above arguments we consider the moduli space $M(n,g)$ parametrized by Riemannian metrics $g$.
But ASD equation depends only on conformal structures, and hence technically it is better to parameterize
ASD moduli spaces by conformal structures.
Section \ref{subsection: anti-self-duality and conformal structure} is a preparation for this consideration.
In Section \ref{subsection: Eigenvalues of Laplacians on differential forms over S^3}, we prepare some estimates 
relating to the Laplacians.

In Section \ref{section: decay estimate of instatons}
we study the decay behavior of instantons over $X$, and show that 
they decay ``sufficiently fast''.
This is important in showing that all instantons can be ``captured'' by the functional analysis setups constructed in 
Sections \ref{section: linear theory} and \ref{subsection: metric perturbation}.

Section \ref{section: preliminaries for linear theory} is a preparation for Section 
\ref{section: linear theory}.
In Section \ref{section: linear theory} we study a (modified version of) operator
$D_A = -d_A^{*} + d_A^{+_d}$ and establish the above mentioned properties (i), (ii), (iii).

In Section \ref{section: non-existence of reducible instantons} we show that 
there is no non-flat reducible instantons on $E$.
Here the condition $b_-(Y)=0$ is essentially used.

Sections \ref{subsection: Sard-Smale's theorem} and
\ref{subsection: Review of Floer's functional space}
are preparations for the perturbation argument in Section 
\ref{subsection: metric perturbation}.
In Section \ref{subsection: metric perturbation} we establish a transversality
by using Freed-Uhlenbeck's metric perturbation.
Here we use the results established in Sections \ref{section: linear theory} and \ref{section: non-existence of reducible instantons}.
Combining the results in Sections \ref{section: decay estimate of instatons} and \ref{subsection: metric perturbation}, 
we prove Theorem \ref{thm: main theorem} in Section \ref{section: proof of the main theorem}.

\section{Some preliminaries}  \label{section: some preliminaries}
\subsection{Anti-self-duality and conformal structure} \label{subsection: anti-self-duality and conformal structure}
In this subsection we review some well-known facts on the relation between anti-self-duality and conformal 
structure.
Specialists of the gauge theory don't need to read the details of the arguments in this subsection.
The references are Donaldson-Sullivan \cite[pp. 185-187]{Donaldson-Sullivan} and 
Donaldson-Kronheimer \cite[pp. 7-8]{Donaldson-Kronheimer}.

We start with a linear algebra.
Let $V$ be an oriented real $4$-dimensional linear space.
We fix an inner product $g_0$ on $V$.
The orientation and inner product give a natural isomorphism 
$\Lambda^4(V)\cong \mathbb{R}$, and we define 
a quadratic form $Q: \Lambda^2(V)\times \Lambda^2(V)\to \mathbb{R}$
by $Q(\xi, \eta) := \xi\wedge \eta \in \Lambda^4(V)\cong \mathbb{R}$.
The dimensions of maximal positive subspaces and maximal negative subspaces with respect to $Q$ are both $3$  

Let $g$ and $g'$ be two inner products on $V$.
They are said to be conformally equivalent if there is $c>0$ such that 
$g_2 = cg_1$.
Let $\conf(V)$ be the set of all conformal equivalence classes of inner-products on $V$.
$\conf(V)$ naturally admits a smooth manifold structure.

We define $\conf'(V)$ as the set of all $3$-dimensional subspaces $U\subset \Lambda^2(V)$ satisfying 
$Q(\omega, \omega) <0$ for all non-zero $\omega\in U$.
$\conf'(V)$ depends on the orientation of $V$, but it is independent of the choice of the 
inner product $g_0$.
$\conf'(V)$ is an open set of the Grassmann manifold $Gr_3(\Lambda^2(V))$, and hence it is also 
a smooth manifold.

Let $\Lambda^+$ be the space of $\omega\in \Lambda^2(V)$ which is self-dual with respect to $g_0$,
and $\Lambda^-$ be the space of $\omega\in \Lambda^2(V)$ which is anti-self-dual with respect to $g_0$.
We define $\conf''(V)$ be the set of linear map $\mu:\Lambda^-\to \Lambda^+$ satisfying 
$|\mu|<1$ (i.e. $|\mu(\omega)| < |\omega|$ for all non-zero $\omega\in \Lambda^-$ where 
the norm $|\cdot|$ is defined by $g_0$).
This is also a smooth manifold as an open set of $\mathrm{Hom}(\Lambda^-, \Lambda^+)$.
The map 
\[ \conf''(V) \to \conf'(V), \quad \mu \mapsto \{\omega + \mu(\omega)|\, \omega\in \Lambda^-\} \]
is a diffeomorphism.
Hence $\conf'(V)$ is contractible. (In particular it is connected.)
\begin{lemma}\label{lemma: the description of conformal structures, linear algebra}
The map 
\begin{equation} \label{eq: diffeomorphism between conf(V) and conf'(V)}
 \conf(V)\to \conf'(V), \quad [g]\mapsto 
 \{\omega\in \Lambda^2(V)|\, \text{$\omega$ is anti-self-dual with respect to $g$}\} ,
\end{equation}
is a diffeomorphism.
\end{lemma}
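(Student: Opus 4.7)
The plan is to establish that the map in~(\ref{eq: diffeomorphism between conf(V) and conf'(V)}) is a well-defined smooth bijection with smooth inverse between smooth manifolds of common dimension $9$. Well-definedness comes down to noting that if $*_g\omega=-\omega$, then $Q(\omega,\omega)=\omega\wedge\omega=-g(\omega,\omega)\,d\vol_g$, which is negative under the identification $\Lambda^4(V)\cong\mathbb{R}$ since both $d\vol_g$ and $d\vol_{g_0}$ are positive multiples of the orientation form; dependence only on $[g]$ is the standard conformal invariance of the Hodge star on $2$-forms in dimension four, and smoothness is clear from the smooth dependence of $*_g$ on $g$.

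For injectivity, if $[g_1]$ and $[g_2]$ yield the same anti-self-dual subspace $U$, they also share the $Q$-orthogonal complement $U^{\perp_Q}$ as their self-dual subspace, and hence the same Hodge involution on $\Lambda^2 V$. A direct linear-algebra argument then recovers the conformal class of $g$ from this involution, for instance by using that the inner product on simple $2$-vectors $v\wedge w$ is controlled by $*$, and this in turn determines the inner product on $V$ up to a positive scalar.

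Surjectivity and smoothness of the inverse I would handle via $GL^+(V)$-equivariance. The group $GL^+(V)$ acts transitively on $\conf(V)$, and acts on $\conf'(V)$ through $\Lambda^2 L$ on $\Lambda^2 V$ (this scales $Q$ by the positive factor $\det L$ and hence preserves $\conf'(V)$); our map is equivariant for these actions. The stabilizer of $[g_0]$ in $GL^+(V)$ is the $7$-dimensional conformal group $\mathbb{R}_{>0}\cdot SO(V,g_0)$. The main obstacle, and the technical heart of the argument, is showing that the stabilizer of $\Lambda^-_{g_0}$ in $GL^+(V)$ coincides with this conformal group: any $L$ preserving $\Lambda^-_{g_0}$ also preserves $\Lambda^+_{g_0}=(\Lambda^-_{g_0})^{\perp_Q}$, so $\Lambda^2 L$ commutes with $*_{g_0}$, and a further linear-algebra argument then forces $L\in \mathbb{R}_{>0}\cdot SO(V,g_0)$. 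With matching stabilizers, the equivariant map is an injective immersion between $9$-dimensional homogeneous spaces; combined with the connectedness of $\conf'(V)$ (noted in the text as a consequence of the diffeomorphism to the convex open set $\conf''(V)\subset\mathrm{Hom}(\Lambda^-,\Lambda^+)$), its image is both open and closed in $\conf'(V)$ and therefore fills it, giving the desired diffeomorphism.
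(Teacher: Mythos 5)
Your well-definedness check, your injectivity argument (same ASD subspace $\Rightarrow$ same $Q$-orthocomplement $\Rightarrow$ same Hodge involution on $\Lambda^2$ $\Rightarrow$ same conformal class), and your identification of the stabilizer of $\Lambda^-_{g_0}$ in $GL^+(V)$ with $\mathbb{R}_{>0}\cdot SO(V,g_0)$ are all sound, and your overall homogeneous-space strategy is close in spirit to the paper's. The gap is in the last step: surjectivity. You correctly get that the image of $\conf(V)$ --- which is exactly the $GL^+(V)$-orbit of $\Lambda^-_{g_0}$ in $\conf'(V)$ --- is \emph{open}, since an injective immersion between $9$-dimensional manifolds is an open map. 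But you then assert the image is also \emph{closed} and invoke connectedness of $\conf'(V)$. Closedness is never justified, and ``open subset of a connected manifold'' is of course not automatically closed (an open ball in $\mathbb{R}^n$ is the standard counterexample). The open-and-closed argument would work if you knew that \emph{every} $GL^+(V)$-orbit in $\conf'(V)$ is open (orbits partition $\conf'(V)$, so each would then be closed as the complement of the union of the others), but you only establish openness for the one orbit through $\Lambda^-_{g_0}$. A priori the action of $GL^+(V)$ on $\conf'(V)$ through $\Lambda^2$ could fail to be transitive, and that is precisely the nontrivial content here.

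The paper fills exactly this hole in two steps: first it shows that the \emph{larger} group $SO(\Lambda^2(V),Q)_0$ acts transitively on $\conf'(V)$ (given $U\in\conf'(V)$, take $Q$-orthonormal bases of $U$ and $U^{\perp_Q}$ to build an isometry of $(\Lambda^2 V,Q)$ carrying $\Lambda^-$ to $U$, then use connectedness of $\conf'(V)$ to pass to the identity component); second, it shows that the natural homomorphism $SL(V)\to SO(\Lambda^2(V),Q)_0$ is a Lie algebra isomorphism, hence a surjective covering, so the $SL(V)$-action on $\conf'(V)$ is already transitive. If you want to keep your route and avoid the covering map, you would instead need a properness argument: show that if a sequence of conformal classes degenerates (eigenvalue ratios of $SL(V)$-normalized representatives blow up), then the corresponding subspaces $\Lambda^-_{g_n}$ converge to a $3$-plane containing a $Q$-null vector such as $e^1\wedge e^2$, hence leave every compact subset of $\conf'(V)$; properness plus openness of the image then does give closedness. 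Either way, an additional argument is required --- as stated, the surjectivity step does not go through.
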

\begin{proof}
For $A\in SL(V)$ and $[g]\in \conf(V)$ we define $[Ag]\in \conf(V)$ by setting 
$(Ag)(u, v) := g(A^{-1}u, A^{-1}v)$.
In this manner $SL(V)$ transitively acts on $\conf(V)$, and the isotropy subgroup at $[g_0]$ is equal to 
$SO(V) = SO(V, g_0)$.
Hence $\conf(V) \cong SL(V)/SO(V)$.
On the other hand, the Lie group $SO(\Lambda^2(V), Q)$ ($\cong SO(3, 3)$) naturally acts on $\conf'(V)$.
This action is transitive.
(For $U\in \conf'(V)$ set $U' := \{\omega\in \Lambda^2(V)|\, Q(\omega, \eta)=0 \>(\forall \eta\in U)\}$.
$\Lambda^2(V) = U\oplus U'$. $Q$ is negative definite on $U$ and positive definite on $U'$.
By choosing orthonormal bases on $U$ and $U'$ with respect to $Q$, we can construct $A\in SO(\Lambda^2(V), Q)$
satisfying $A(\Lambda^-)=U$.)

Let $SO(\Lambda^2(V),Q)_0$ be the identity component of $SO(\Lambda^2(V), Q)$.
Since $\conf'(V)$ is connected, $SO(\Lambda^2(V),Q)_0$ also transitively acts on $\conf'(V)$.
The isotropy group of this action at $\Lambda^-\in \conf'(V)$ is equal to 
$SO(\Lambda^+)\times SO(\Lambda^-)$.
(It is easy to see that if $A\in SO(\Lambda^2(V),Q)_0$ fixes $\Lambda^-$ then it also 
fixes $\Lambda^+$. Hence $A \in O(\Lambda^+)\times O(\Lambda^-)$.
Since $\conf'(V)$ is contractible, the isotropy subgroup
must be connected. Therefore $A \in SO(\Lambda^+)\times SO(\Lambda^-)$.)
Thus $\conf'(V)\cong SO(\Lambda^2(V), Q)_0/ SO(\Lambda^+)\times SO(\Lambda^-)$

$SL(V)$ naturally acts on $\Lambda^2(V)$, and it preserves the quadratic form $Q$.
Hence we have a homomorphism $f:SL(V)\to SO(\Lambda^2(V), Q)_0$.
A direct calculation shows that it induces an isomorphism between their Lie algebras.
Hence the homomorphism $f:SL(V)\to SO(\Lambda^2(V), Q)_0$ is a (surjective) covering map.
$f^{-1}(SO(\Lambda^+)\times SO(\Lambda^-))$ is equal to $SO(V)$.
(It is easy to see that $SO(V)\subset f^{-1}(SO(\Lambda^+)\times SO(\Lambda^-))$ and their dimensions are both $6$.
$SL(V)$ is connected and  
$SL(V)/f^{-1}(SO(\Lambda^+)\times SO(\Lambda^-)) \cong SO(\Lambda^2(V), Q)_0/ SO(\Lambda^+)\times SO(\Lambda^-)
\cong \conf'(V)$ is contractible.
Hence $f^{-1}(SO(\Lambda^+)\times SO(\Lambda^-))$ must be connected.
Therefore it is equal to $SO(V)$.)
Thus $SL(V)/SO(V)\cong SO(\Lambda^2(V), Q)_0/SO(\Lambda^+)\times SO(\Lambda^-)$.
This gives a diffeomorphism $\conf(V)\cong \conf'(V)$, and this diffeomorphism coincides with the above 
map (\ref{eq: diffeomorphism between conf(V) and conf'(V)}).
\end{proof}
Let $M$ be an oriented $4$-manifold (not necessarily compact), and $g_0$
be a smooth Riemannian metric on $M$.
Two Riemannian metrics $g$ and $g'$ on $M$ are said to be conformally equivalent if 
there is a positive function $\varphi:M\to \mathbb{R}$ satisfying $g' = \varphi g$.
Let $\conf(M)$ be the set of all conformal equivalence classes of $\mathcal{C}^\infty$-Riemannian metrics on $M$.

Let $\Lambda^+$ and $\Lambda^-$ be the sub-bundles of $\Lambda^2 := \Lambda^2(T^*M)$ consisting of self-dual and 
anti-self-dual $2$-forms with respect to $g_0$.
For $[g]\in \conf(M)$ we define a sub-bundle
$\Lambda^-_g\subset \Lambda^2$ as the set of anti-self-dual $2$-forms with respect to $g$.
There is a $\mathcal{C}^\infty$-bundle map $\mu_g:\Lambda^-\to \Lambda^+$ such that 
$|(\mu_g)_x|<1$ $(x\in M)$ and that $\Lambda_g^-$ is equal to the graph 
$\{\omega + \mu_g(\omega)|\, \omega\in \Lambda^-\}$.
Here $|(\mu_g)_x|<1$ $(x\in M)$ means that $|\mu_g(\omega)|< |\omega|$ for all non-zero $\omega\in \Lambda^-$.
($|\cdot|$ is the norm defined by $g_0$.)
From the previous argument, we get the following result.
\begin{corollary}  \label{cor: description of conformal structures, manifold case}
The map 
\[ \conf(M)\to \{\mu: \Lambda^-\to \Lambda^+: \text{$\mathcal{C}^\infty$-bundle map}|\, 
   |\mu_x|< 1 \> (x\in M)\}, \quad  [g]\mapsto \mu_g ,  \]
is bijective.
\end{corollary}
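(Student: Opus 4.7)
The plan is to reduce Corollary \ref{cor: description of conformal structures, manifold case} to the pointwise statement of Lemma \ref{lemma: the description of conformal structures, linear algebra} and then take care of the smoothness (and, for the surjectivity, globalization) of the resulting correspondence. Fix $x\in M$ and apply the lemma to $V=T_x^*M$ with the inner product induced by $g_0$; this gives, at each point, a bijection between conformal classes of inner products on $T_xM$ and linear maps $\Lambda^-_x\to \Lambda^+_x$ of operator norm $<1$, and under this bijection the class of $g_x$ corresponds to the linear map $(\mu_g)_x$ whose graph cuts out $(\Lambda^-_g)_x$.

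For injectivity of $[g]\mapsto \mu_g$, I would argue that if $\mu_g=\mu_{g'}$ then $\Lambda^-_g=\Lambda^-_{g'}$ pointwise, so by the linear lemma $g$ and $g'$ are pointwise conformally equivalent. The conformal factor $\varphi:M\to\mathbb{R}_{>0}$ with $g'=\varphi g$ is then uniquely determined pointwise and, because $g,g'$ are smooth, $\varphi$ can be recovered smoothly from the ratio of any nonzero component (e.g., $\varphi = g'(v,v)/g(v,v)$ for a local nowhere-zero vector field $v$), hence $\varphi\in\mathcal{C}^\infty(M)$ and $[g]=[g']$ in $\conf(M)$.

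For surjectivity, given a smooth bundle map $\mu:\Lambda^-\to\Lambda^+$ with $|\mu_x|<1$ everywhere, I would construct a smooth Riemannian metric $g$ with $\mu_g=\mu$. Pointwise existence is Lemma \ref{lemma: the description of conformal structures, linear algebra}. To globalize, I would work in a local orthonormal frame for $(M,g_0)$: in such a frame the linear isomorphism $SL(V)/SO(V)\cong SO(\Lambda^2(V),Q)_0/SO(\Lambda^+)\times SO(\Lambda^-)$ from the proof of the lemma depends smoothly on the frame, so the local inverse map, which sends $\mu$ to a representative inner product, produces a smooth local metric. Cover $M$ by such charts and patch using a partition of unity; since $\conf(V)$ is convex (a symmetric space of non-compact type, or directly since one may average inner products in a fixed conformal class after rescaling), the pointwise conformal class is preserved under convex combinations, yielding a global smooth $g$ with $\mu_g=\mu$.

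The main technical obstacle is the smooth globalization in the surjectivity step: one must verify that the pointwise inversion of Lemma \ref{lemma: the description of conformal structures, linear algebra} is smooth in parameters (which follows from the fact that the homogeneous space identification in its proof is a diffeomorphism, hence depends smoothly on the underlying splitting $\Lambda^2=\Lambda^+\oplus\Lambda^-$), and that the partition-of-unity patching stays inside the correct conformal class at each point. Smoothness of $[g]\mapsto \mu_g$ is automatic from the smooth dependence of $\Lambda^-_g$ on $g$, so once the above is carried out the corollary follows.
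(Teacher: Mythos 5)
Your proof is correct and follows the same route as the paper, which derives the corollary simply by applying Lemma \ref{lemma: the description of conformal structures, linear algebra} fiberwise (``from the previous argument''); you merely make explicit the smoothness and globalization details that the paper leaves implicit. In particular, your key observation for surjectivity --- that the set of inner products in a fixed conformal class of $T_xM$ is a ray, hence convex, so a partition-of-unity average of local smooth representatives remains in the prescribed pointwise class --- is exactly the point needed to patch the local inverses of the pointwise bijection.
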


\subsection{Eigenvalues of the Laplacians on differential forms over $S^3$} 
\label{subsection: Eigenvalues of Laplacians on differential forms over S^3}
We will sometimes need estimates relating to lower bounds on the eigenvalues of the Laplacians on $S^3$.
Here the $3$-sphere $S^3$ is endowed with the Riemannian metric induced by the inclusion
$S^3 = \{x\in \mathbb{R}^4|\, x_1^2+x_2^2+x_3^2+x_4^2=1\} \subset \mathbb{R}^4$.
($\mathbb{R}^4$ has the standard Euclidean metric.)
The formal adjoint of $d:\Omega^i\to \Omega^{i+1}$ is denoted by $d^*:\Omega^{i+1}\to \Omega^i$.
\begin{lemma} \label{lemma: first non-zero eigenvalue of Laplacians on functions}
The first non-zero eigenvalue of the Laplacian $\Delta = d^*d$ acting on functions over $S^3$ is $3$.
\end{lemma}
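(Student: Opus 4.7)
The plan is to realize eigenfunctions of $\Delta$ on $S^3$ as restrictions of homogeneous harmonic polynomials on $\mathbb{R}^4$ and read off the eigenvalues from the degree. First I would write the flat Laplacian on $\mathbb{R}^4$ in polar coordinates $(r,\theta)$ with $\theta\in S^3$:
\[ \Delta_{\mathbb{R}^4} = -\frac{\partial^2}{\partial r^2} - \frac{3}{r}\frac{\partial}{\partial r} + \frac{1}{r^2}\Delta_{S^3}. \]
For a homogeneous polynomial $P(x)=r^k Y(\theta)$ of degree $k$ with $Y:=P|_{S^3}$, a direct substitution yields
\[ \Delta_{\mathbb{R}^4} P = r^{k-2}\bigl(\Delta_{S^3} Y - k(k+2)Y\bigr). \]
In particular, if $P$ is harmonic on $\mathbb{R}^4$, then $Y$ is an eigenfunction of $\Delta_{S^3}$ with eigenvalue $k(k+2)$.

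Next I would argue that these restrictions exhaust the spectrum. By the Stone--Weierstrass theorem, polynomial functions are dense in $C(S^3)$ and hence in $L^2(S^3)$. Using the standard algebraic decomposition of polynomials on $\mathbb{R}^4$ into harmonic components --- every polynomial of degree at most $k$ can be written as $\sum_{j\geq 0} |x|^{2j} H_{k-2j}(x)$ with $H_m$ a homogeneous harmonic polynomial of degree $m$, proved by induction on $k$ via the surjectivity of $\Delta_{\mathbb{R}^4}$ on spaces of homogeneous polynomials --- one sees on restricting to $\{r=1\}$ that harmonic polynomial restrictions already span a dense subspace of $L^2(S^3)$. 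Since $\Delta_{S^3}$ is an elliptic self-adjoint operator on a compact manifold, its eigenspaces are mutually orthogonal and together span $L^2(S^3)$; combined with the density statement, this forces the spectrum to be exactly $\{k(k+2) : k=0,1,2,\dots\}$.

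Finally, the first non-zero eigenvalue corresponds to $k=1$, giving $\lambda_1 = 1\cdot 3 = 3$, with the eigenspace spanned by the four linear coordinate functions $x_1,x_2,x_3,x_4$ restricted to $S^3$ (these are obviously harmonic on $\mathbb{R}^4$, and a direct check confirms that each satisfies $\Delta_{S^3} x_i = 3\, x_i$ on $S^3$).

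The main technical point is the completeness step --- verifying that restrictions of harmonic polynomials are dense in $L^2(S^3)$ and therefore that no eigenvalues are missed. The polar-coordinates computation and the degree-to-eigenvalue relation $k\mapsto k(k+2)$ are elementary; the harmonic decomposition of polynomials is a well-known linear-algebra fact that one could either invoke from the literature or establish in a few lines by induction.
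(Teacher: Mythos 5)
Your argument is correct and complete: the polar-coordinate identity, the degree-to-eigenvalue relation $k\mapsto k(k+2)$, and the density of harmonic-polynomial restrictions (via Stone--Weierstrass plus the decomposition $P=\sum_j |x|^{2j}H_{k-2j}$) together pin down the spectrum of $\Delta_{S^3}$ as $\{k(k+2)\}_{k\geq 0}$, so the first non-zero eigenvalue is $3$. The paper itself offers no argument here --- it simply cites Sakai's textbook (Proposition 3.13, p.~272), which establishes the spectrum of the round sphere by exactly this spherical-harmonics method --- so you have in effect written out the standard proof that the reference contains.
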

\begin{proof}
See Sakai \cite[p. 272, Proposition 3.13]{Sakai}.
\end{proof}
\begin{lemma}
Let $\mathrm{Ker}(d^*)\subset \Omega^1$ be the space of $1$-forms $a$ over $S^3$ satisfying 
$d^* a=0$. 
Then the first eigenvalue of the Laplacian $\Delta = d^*d + d d^*$ acting on $\mathrm{Ker}(d^*)$ is 
$4$.
\end{lemma}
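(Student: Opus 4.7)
The plan is to combine an explicit eigenvalue calculation on $S^3 = SU(2)$ with a spectral decomposition under the isometric $SO(4)$-action. First I would reduce to a convenient form. Since $H^1(S^3) = 0$, the Hodge decomposition gives $\mathrm{Ker}(d^*) \cap \Omega^1 = d^*\Omega^2$, and on this subspace $\Delta = d^*d$. The Hodge star $*\colon \Omega^1 \to \Omega^2$ is a pointwise isometry commuting with $\Delta$ and satisfying $** = \mathrm{id}$; together with the sign conventions $d^* = -*d*$ on $1$-forms and $d^* = *d*$ on $2$-forms of an oriented $3$-manifold, the operator $P := *d$ preserves $\mathrm{Ker}(d^*)$, is formally self-adjoint, and satisfies $P^2 = d^*d = \Delta$ on this subspace. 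Hence the positive eigenvalues of $\Delta|_{\mathrm{Ker}(d^*)}$ are exactly the squares of the non-zero eigenvalues of $P|_{\mathrm{Ker}(d^*)}$.

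Next I would show that $4$ is attained. Identifying $S^3$ with $SU(2)$ under the bi-invariant metric of sectional curvature $1$, pick an orthonormal left-invariant coframe $\omega_1, \omega_2, \omega_3$ with structure equations $d\omega_i = 2\,\omega_j \wedge \omega_k = 2\,{*}\omega_i$ for $(i,j,k)$ a cyclic permutation of $(1,2,3)$ (up to a global orientation sign). Each $\omega_i$ is coclosed since $d\,{*}\omega_i = \tfrac12 d(d\omega_i) = 0$, and $P\omega_i = {*}d\omega_i = 2\,{*}{*}\omega_i = 2\omega_i$, giving $\Delta\omega_i = 4\omega_i$. Equivalently, each $\omega_i$ is dual to a Killing field on the Einstein manifold $S^3$ with $\mathrm{Ric} = 2g$, and the standard identity $\nabla^*\nabla X = \mathrm{Ric}(X)$ for Killing fields combined with the Weitzenb\"ock formula $\Delta = \nabla^*\nabla + \mathrm{Ric}$ yields $\Delta\omega_i = 2\,\mathrm{Ric}(\omega_i) = 4\omega_i$.

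The main obstacle is the matching lower bound $\lambda \ge 4$, as Weitzenb\"ock alone gives only $\lambda \ge 2$, which is not sharp. My approach would be to decompose $L^2(\Omega^1(S^3))$ under the isometric action of $\mathrm{Spin}(4) = SU(2)_L \times SU(2)_R$ by left and right translation. Trivializing $T^*S^3$ via the left-invariant coframe gives $L^2(\Omega^1) \cong L^2(SU(2)) \otimes \mathfrak{su}(2)^*$, and Peter-Weyl writes $L^2(SU(2)) \cong \bigoplus_{m \ge 0} V_m \boxtimes V_m^*$; with $SU(2)_R$ acting on $\mathfrak{su}(2)^* \cong V_2$ via the adjoint, Clebsch-Gordan decomposes each summand and $\Delta$ acts by a Casimir scalar on every isotype. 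Matching the exact part $d\Omega^0$ with the scalar eigenvalues $k(k+2)$ of Lemma~\ref{lemma: first non-zero eigenvalue of Laplacians on functions} pins down the coexact complement, and a direct Casimir bookkeeping then shows its smallest positive eigenvalue is $4$, attained precisely by the $SO(4)$-isotype containing the $\omega_i$. Alternatively one may quote the explicit Hodge spectrum of $S^n$ due to Ikeda and Taniguchi.
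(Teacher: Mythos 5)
Your argument is essentially correct, but note that the paper does not prove this lemma at all: it is quoted verbatim from Donaldson--Kronheimer, Lemma (7.3.4), so you are supplying a proof where the paper supplies a citation. Your upper bound is complete and clean: the reduction to the self-adjoint first-order operator $P=*d$ with $P^2=d^*d$ on $\mathrm{Ker}(d^*)$ is correct (on an oriented $3$-manifold $**=1$ in every degree, $d^*=*d*$ on $2$-forms, and $P$ preserves coclosed forms), and either the Maurer--Cartan computation $P\omega_i=\pm 2\omega_i$ or the Killing-field identity on the Einstein manifold $(S^3,\mathrm{Ric}=2g)$ exhibits the eigenvalue $4$. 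You are also right that Weitzenb\"ock alone only gives $\lambda\geq 2$, so a genuine input is needed for the lower bound. The one place where your write-up is a sketch rather than a proof is the ``direct Casimir bookkeeping'': in the left-invariant trivialization $L^2(\Omega^1)\cong L^2(SU(2))\otimes\mathfrak{su}(2)^*$ the rough Laplacian is \emph{not} simply the right-regular Casimir (the Levi--Civita connection differs from the Maurer--Cartan connection by a nonzero tensor), so you must either invoke the symmetric-space identity $\Delta=\mathrm{Cas}$ for the Hodge Laplacian on homogeneous bundles over $S^3=\mathrm{Spin}(4)/\mathrm{Spin}(3)$, or carry out the Frobenius-reciprocity count showing that the coexact isotypes are exactly $V_a\boxtimes V_{a+2}$ and $V_{a+2}\boxtimes V_a$ $(a\geq 0)$ with Casimir eigenvalues $\tfrac12\bigl(a(a+2)+(a+2)(a+4)\bigr)=(a+2)^2\geq 4$, or simply cite Ikeda--Taniguchi as you suggest. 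Any of these closes the gap; as written, the lower bound is asserted rather than derived. A shorter alternative in the spirit of your own setup: every $\Delta$-eigenspace in $\mathrm{Ker}(d^*)$ is $P$-invariant, and one can bound the eigenvalues of $P$ directly by the integral inequality $\bigl|\int_{S^3}a\wedge da\bigr|\leq\tfrac12\int_{S^3}|da|^2$ of Corollary \ref{cor: estimate relating to the first eigenvelue of Laplacian on 1-forms}, since $Pa=\lambda a$ with $d^*a=0$ forces $\lambda\int|a|^2=\int\langle a,*da\rangle=\int a\wedge da$ and $\int|da|^2=\lambda^2\int|a|^2$, whence $|\lambda|\geq 2$ --- though of course that corollary is itself derived from the present lemma in the paper, so one must make sure the logic is not circular.
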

\begin{proof}
See Donaldson-Kronheimer \cite[p. 310, Lemma (7.3.4)]{Donaldson-Kronheimer}.
\end{proof}
As a corollary we get the following. (This is given in \cite[p. 310, Lemma (7.3.4)]{Donaldson-Kronheimer}.)
\begin{corollary} \label{cor: estimate relating to the first eigenvelue of Laplacian on 1-forms}
(i)
Let $a$ be a smooth $1$-form over $S^3$ satisfying $d^*a=0$. Then 
\[ \int_{S^3}|a|^2 d\vol \leq \frac{1}{4}\int_{S^3}|da|^2d\vol.\]

\noindent
(ii) For any smooth $1$-form $a$ on $S^3$, we have
\[   \left|\int_{S^3}a\wedge da\right| \leq \frac{1}{2}\int_{S^3}|da|^2 d\vol.\]
\end{corollary}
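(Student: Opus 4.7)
The plan is to derive both parts directly from the preceding lemma on the first eigenvalue of $\Delta$ on $\ker(d^*)$, together with Hodge theory on $S^3$.

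For (i), I would argue via the Rayleigh quotient. Since $\Delta = d^*d + dd^*$ commutes with $d^*$, the space $\ker(d^*)\subset \Omega^1(S^3)$ is $\Delta$-invariant, and there is an orthonormal basis of $\ker(d^*)$ consisting of $\Delta$-eigenforms whose eigenvalues are, by the preceding lemma, all $\geq 4$. Expanding $a=\sum_k c_k \varphi_k$ with $\Delta\varphi_k=\lambda_k\varphi_k$ and $\lambda_k\geq 4$, one gets
\[
  \int_{S^3}|da|^2\,d\vol = \langle d^*da, a\rangle = \langle \Delta a, a\rangle = \sum_k \lambda_k c_k^2 \geq 4\sum_k c_k^2 = 4\int_{S^3}|a|^2\,d\vol,
\]
which is the desired inequality. (I am using $d^*a=0$ to drop $dd^*a$ and to justify $\langle d^*da,a\rangle=\langle da,da\rangle$.)

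For (ii), I would reduce to the coclosed case using the Hodge decomposition. Since $H^1(S^3;\mathbb{R})=0$, every smooth $1$-form $a$ on $S^3$ decomposes as $a=df+b$ with $f\in\Omega^0(S^3)$ and $d^*b=0$. Then $da=db$, so
\[
  \int_{S^3} a\wedge da = \int_{S^3} df\wedge db + \int_{S^3} b\wedge db.
\]
The first term vanishes: $df\wedge db = d(f\,db)$ because $d(db)=0$, and $S^3$ is closed, so Stokes' theorem gives $\int_{S^3} d(f\,db)=0$. For the remaining term I would use the pointwise estimate $|b\wedge db|\leq |b|\,|db|$ (which comes from $b\wedge db=\langle b,*db\rangle\,d\vol$ and Cauchy--Schwarz applied to $1$-forms on the fibers), then Cauchy--Schwarz on $S^3$, then part (i) applied to the coclosed form $b$:
\[
  \Bigl|\int_{S^3} b\wedge db\Bigr| \leq \|b\|_{L^2}\,\|db\|_{L^2} \leq \tfrac{1}{2}\|db\|_{L^2}^2 = \tfrac{1}{2}\int_{S^3}|da|^2\,d\vol,
\]
using $\|b\|_{L^2}\leq \tfrac{1}{2}\|db\|_{L^2}$ from (i) and $db=da$.

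None of the steps look genuinely hard; the one small point that needs care is the pointwise inequality $|b\wedge db|\leq |b|\,|db|$, which I would justify either by the $\langle b,*db\rangle\,d\vol$ identification above or, equivalently, by picking an oriented orthonormal coframe at a point and expanding $b\wedge db$ explicitly. The Hodge decomposition step is standard on the closed manifold $S^3$, and the vanishing of $H^1(S^3)$ makes it particularly clean (no harmonic component to worry about).
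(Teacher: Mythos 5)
Your proposal is correct and follows essentially the same route as the paper: part (i) is the Rayleigh-quotient consequence of the eigenvalue lower bound $4$ on $\ker(d^*)$, and part (ii) reduces to the coclosed piece $b=a-df$ via the Hodge decomposition and then applies Cauchy--Schwarz together with (i). The extra details you supply (the vanishing of $\int df\wedge db$ by Stokes and the pointwise bound $|b\wedge db|\leq|b|\,|db|$) are exactly the steps the paper leaves implicit.
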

\begin{proof}
(i)
$\int |da|^2 = \int \langle a, \Delta a\rangle  \geq 4\int |a|^2$.

\noindent
(ii)
There is a smooth function $f$ on $S^3$ such that $b := a-df$ satisfies $d^*b=0$.
Then  
$\left|\int a\wedge da\right| = \left|\int b\wedge db\right| \leq \sqrt{\int |b|^2}\sqrt{\int |db|^2} 
   \leq \frac{1}{2}\int|db|^2 = \frac{1}{2}\int |da|^2$.
\end{proof}

\section{Decay estimate of instantons} \label{section: decay estimate of instatons}
\subsection{Classification of adapted connections} \label{subsection: classification of adapted connections}
Let us go back to the situation of Section \ref{subsection: construction}.
$Y$ is a simply connected, compact oriented $4$-manifold, and 
$X = Y^{\sharp \mathbb{Z}}$ is the connected sum of the infinite copies of $Y$ indexed by $\mathbb{Z}$. 
Since $X$ is non-compact, every principal $SU(2)$-bundle on it is 
isomorphic to the product bundle $E=X\times SU(2)$.
Following Donaldson \cite[Definition 3.5]{Donaldson}, we make the following definition.
\begin{definition} \label{def: adapted connection}
An adapted connection $A$ on $E$ is a connection on $E$ which is flat outside a compact set.
(That is, there is a compact set $L \subset X$ such that $F_A=0$ over $X\setminus L$.)
Two adapted connections $A_1$ and $A_2$ on $E$ are said to be equivalent as adapted connections if 
there is a gauge transformation $u:E\to E$ such that $u(A_1)$ is equal to $A_2$ outside a compact set.
\end{definition}
For $m\in \mathbb{Z}$, let $u_m:X\to SU(2)$ be a smooth map such that 
$(\rho_m)_* :H_3(X)\to H_3(SU(2))$ satisfies 
$(\rho_m)_*([S^3]) = m [SU(2)]$.
(Here $[S^3]$ is the fundamental class of the cross-section $S^3\subset  Y^{(n)}_T\cap Y^{(n)}_+$, 
and it is a generator of $H_3(X) \cong \mathbb{Z}$.
See Remark \ref{remark: orientation convention} below.)
This means that the restriction of $u_m$ to the cross-section $S^3\subset  Y^{(n)}_T\cap Y^{(n)}_+$
becomes a map of degree $m$ from $S^3$ to $SU(2)$ (for every $n\in \mathbb{Z}$).
\begin{remark} \label{remark: orientation convention}
The cross-section $S^3\subset  Y^{(n)}_T\cap Y^{(n)}_+$ is endowed with the orientation so that 
the identification $Y^{(n)}_T\cap Y^{(n)}_+ = (1,T-1)\times S^3$ is orientation preserving.
(The interval $(1,T-1)$ has the standard orientation.)
The orientation on the Lie group $SU(2)$ is chosen as follows:
Let $\theta\in \Omega^1\otimes su(2)$ be the left invariant $1$-form (on $SU(2)$) valued in 
the Lie algebra $su(2)$ satisfying 
$\theta(X)=X$ for all $X\in su(2) =T_1SU(2)$.
(In the standard notation, we can write $\theta = g^{-1}dg$ for $g\in SU(2)$.)
We choose the orientation on $SU(2)$ so that 
\begin{equation} \label{eq: orientation convention on SU(2)}
 \frac{1}{8\pi^2}\int_{SU(2)} tr\left(\theta\wedge d\theta + \frac{2}{3}\theta^3\right) = 
 \frac{-1}{24\pi^2}\int_{SU(2)} tr (\theta^3)= 1. 
\end{equation}
\end{remark}
Since $E$ is the product bundle, $u_m$ becomes a gauge transformation of $E$.
Let $\rho$ be the product flat connection on $E=X\times SU(2)$, and
set $\rho_m :=  u_m^{-1}(\rho)$.
Let $A(m)$ be a connection on $E$ which is equal to $\rho$ over $q^{-1}(-\infty,-1)$ and equal to 
$\rho_m$ over $q^{-1}(1,+\infty)$.
$A(m)$ is an adapted connection on $E$.
For $t>1$ we have 
\begin{equation} \label{eq: second Chern number of adapted connection A(m)}
 \frac{1}{8\pi^2}\int_X tr(F(A(m))^2) 
  = \frac{1}{8\pi^2}\int_{q^{-1}(t)} u_m^{*}\left(tr(\theta\wedge d\theta + \frac{2}{3}\theta^3)\right) = m.
\end{equation}
Here we have used (\ref{eq: orientation convention on SU(2)}) and 
$\deg(u_m|_{q^{-1}(t)}:q^{-1}(t) \to SU(2)) =m$.
\begin{proposition} \label{prop: classification of adapted connections}
For $m_1\neq m_2$, $A(m_1)$ and $A(m_2)$ are not equivalent as adapted connections.
If $A$ is an adapted connection on $E$, then $A$ is equivalent to $A(m)$ as an adapted connection where
\[ m = \frac{1}{8\pi^2}\int_X tr F_A^2 .\]
(An important point for us is that there are only countably many equivalence classes of adapted connections.)
\end{proposition}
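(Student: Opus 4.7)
The plan is to prove the two parts of the proposition --- inequivalence of distinct $A(m)$'s, and representability of every adapted connection by some $A(m)$ --- using the gauge-invariance of the second Chern number $c(A) := \frac{1}{8\pi^2}\int_X \mathrm{tr}(F_A^2)$, which is a well-defined real number for every adapted connection $A$ because $F_A$ is compactly supported. For the inequivalence, suppose $u:E\to E$ is a gauge transformation with $u(A_1) = A_2$ outside a compact set $L$ (which I may choose with smooth boundary). Pointwise conjugation gives $\mathrm{tr}(F_{u(A_1)}^2) = \mathrm{tr}(F_{A_1}^2)$ throughout $X$, whereas outside $L$ we have $F_{u(A_1)} = F_{A_2}$, so the two forms coincide there. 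The Chern-Simons transgression $\mathrm{tr}(F^2) = d\,\mathrm{CS}(A)$ and Stokes' theorem reduce the difference of the integrals over $L$ to $\int_{\partial L}(\mathrm{CS}(u(A_1))-\mathrm{CS}(A_2))$, which vanishes because $u(A_1) = A_2$ on $\partial L$. Hence $c(A_1) = c(A_2)$, and since $c(A(m)) = m$ by (\ref{eq: second Chern number of adapted connection A(m)}), we conclude $A(m_1)\not\sim A(m_2)$ whenever $m_1\neq m_2$.

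For the converse, let $A$ be an adapted connection with $c(A) = m$. First I would observe that $X$ has exactly two ends, $q^{-1}(\pm\infty)$, each simply connected by an iterated application of van Kampen's theorem (it is an infinite connected sum of the simply-connected pieces $Y^{(n)}_T$ glued along $3$-spheres). Enlarging the compact set on which $A$ fails to be flat and choosing $C>0$ inside a neck region, I may assume $A$ is flat on the two ends and the middle $W := q^{-1}([-C,C])$ is a compact, simply-connected, oriented 4-manifold with $\partial W \cong S^3 \sqcup S^3$. On each simply-connected end, flatness of $A$ produces a gauge $s_\pm$ with $s_\pm(A) = \rho$. Extending $s_-$ to a smooth global gauge $s:X\to SU(2)$ and replacing $A$ by $s(A)$, I may assume $A = \rho$ on the left end while $A = h^{-1}dh$ on the right end for some smooth $h : q^{-1}(C,+\infty) \to SU(2)$. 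Running the Chern-Simons computation of (\ref{eq: second Chern number of adapted connection A(m)}) on this normalized $A$ identifies $\deg(h|_{S^3}) = m$.

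The remaining task is to produce a global gauge $\tilde g : X \to SU(2)$ with $\tilde g(A) = A(m)$ outside a compact set. A direct gauge-action calculation shows that on the right end an explicit product of $h^{\pm 1}$ and $u_m^{\pm 1}$ (such as $\tilde g = u_m^{-1}h$, with the exact formula dictated by sign conventions) sends $h^{-1}dh$ to $u_m^{-1}du_m$, while on the left end the constant choice $\tilde g \equiv 1$ preserves $\rho$; these are precisely the corresponding values of $A(m)$. It remains to verify that this boundary data extends smoothly across $W$. Since $\pi_1(SU(2)) = \pi_2(SU(2)) = 0$ and $\pi_3(SU(2)) = \mathbb{Z}$, obstruction theory for the extension of a map $\partial W \to SU(2)$ across $W$ collapses to a single obstruction in $H^4(W,\partial W;\mathbb{Z}) \cong \mathbb{Z}$, equal to the signed sum of the degrees of $\tilde g$ on the two boundary spheres. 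On the left boundary $\tilde g \equiv 1$ has degree $0$; on the right boundary $h|_{S^3}$ and $u_m|_{S^3}$ both have degree $m$ and are therefore homotopic as maps to $SU(2) = S^3$, so the combination $u_m^{-1}h|_{S^3}$ is null-homotopic and has degree $0$. The obstruction therefore vanishes, $\tilde g$ extends globally, and composing with $s$ yields the desired equivalence $A \sim A(m)$.

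The main hurdle I anticipate is the obstruction-theoretic step in the last paragraph: confirming that $W$ really can be chosen simply connected with two $S^3$ boundary components (so the lower obstructions automatically vanish by the connectivity properties of $SU(2)$), and tracking the orientation conventions from Remark \ref{remark: orientation convention} carefully enough that the right-boundary degree really evaluates to $0$ rather than, say, $\pm 2m$. Once these topological inputs are secure, the other ingredients --- the trivialization of flat connections on simply-connected open sets, the routine gauge-action identity on the right end, and the Chern-Simons transgression --- are all standard.
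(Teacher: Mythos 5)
Your proposal is correct and follows essentially the same route as the paper: trivialize the flat connection on each simply-connected end, measure the degree $m$ of the resulting transition map on a cross-sectional $S^3$ (identified with $\frac{1}{8\pi^2}\int_X \mathrm{tr}\,F_A^2$ via the Chern--Simons transgression), and use $u_m$ to reduce to a degree-zero map. The only divergence is the final extension step --- the paper extends the null-homotopic map $u_m^{-1}u'$ to the constant $1$ across a single collar $q^{-1}[M-1,M]$ inside the end, whereas you extend boundary data across the whole compact middle piece $W$ by obstruction theory in $H^4(W,\partial W;\pi_3(SU(2)))$; both succeed for the same reason (namely that $SU(2)$ is $2$-connected and the relevant degrees vanish), with the paper's collar trick being slightly more elementary.
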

\begin{proof}
The first statement follows from the equation (\ref{eq: second Chern number of adapted connection A(m)}).

Let $A$ be an adapted connection on $E$.
There is $M>0$ such that $A$ is flat on $q^{-1}(-\infty,-M]$ and $q^{-1}[M,\infty)$.
We choose $M>1$ so that $q^{-1}(M) = S^3\subset Y^{(n)}_T\cap Y^{(n)}_+$ and 
$q^{-1}(-M)=S^3\subset Y^{(-n)}_T\cap Y^{(-n)}_-$ for some $n>0$.
Since $q^{-1}(-\infty,-M]$ and $q^{-1}[M,\infty)$ are simply connected,
there are gauge transformations $u$ on $q^{-1}(-\infty,-M]$ and $u'$ on $q^{-1}[M,\infty)$ such that 
$u(A)=\rho$ and $u'(A)=\rho$.
We can extend $u$ all over $X$. Hence we can suppose that $u=1$ and that $A$ is equal to $\rho$ over $q^{-1}(-\infty,-M]$.
Set $m := \mathrm{deg}(u'|_{q^{-1}(M)}:q^{-1}(M)\to SU(2))$.
The degree of the map $(u_m^{-1}u')|_{q^{-1}(M)} :q^{-1}(M)\to SU(2)$ is zero.
Then there is a gauge transformation $u''$ of $E$ such that $u'' = u_m^{-1}u'$ on $q^{-1}[M,+\infty)$ 
and $u'' = 1$ on $q^{-1}(-\infty, M-1)$.
Then $u''(A)$ is equal to $\rho$ over $q^{-1}(-\infty,-M]$ and equal to $u_m^{-1}(\rho)$ over $q^{-1}[M,+\infty)$.
Hence $u''(A)$ is equal to $A(m)$ outside a compact set.
We have
\[ m= \frac{1}{8\pi^2}\int_X tr F(A(m))^2 = \frac{1}{8\pi^2}\int_X tr F_A^2 .\]
\end{proof}

\subsection{Preliminaries for the decay estimate}
We need the following. (This is a special case of \cite[Proposition 3.1, Remark 3.2]{Fukaya}.)
\begin{proposition} \label{prop: almost flat ASD connections}
Let $Z$ be a simply-connected compact Riemannian $4$-manifold with (or without) boundary, 
and $W \subset Z$ be a compact subset with $W \cap \partial Z=\emptyset$.
Then there are positive numbers $\varepsilon_1(W, Z)$ and $C_{1,k}(W, Z)$ $(k\geq 0)$ satisfying the following:
Let $A$ be an ASD connection on the product principal $SU(2)$-bundle over $Z$ satisfying 
$\norm{F_A}_{L^2(Z)}\leq \varepsilon_1(W, Z)$.
Then $A$ can be represented by a connection matrix $\tilde{A}$ over a neighborhood of $W$ satisfying 
\[ \norm{\tilde{A}}_{\mathcal{C}^k(W)} \leq C_{1,k}\norm{F_A}_{L^2(Z)}, \]
for all $k\geq 0$.
\end{proposition}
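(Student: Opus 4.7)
The plan is to combine Uhlenbeck's small-energy Coulomb gauge fixing on small balls with a patching argument that exploits the simply-connectedness of $Z$. First, since $W$ is compactly contained in $Z\setminus\partial Z$, I would choose a finite open cover $\{B_i\}$ of a neighborhood of $W$ by geodesic balls whose closures lie in $\mathrm{Int}(Z)$, with radii so small that the hypotheses of Uhlenbeck's local gauge theorem are satisfied on each $B_i$ as soon as $\norm{F_A}_{L^2(Z)}\leq \varepsilon_1$. On each $B_i$ I obtain a local gauge $s_i$ in which the connection matrix $a_i=s_i^{-1}As_i+s_i^{-1}ds_i$ satisfies the Coulomb condition $d^*a_i=0$ together with $\norm{a_i}_{L^2_1(B_i)}\leq C\norm{F_A}_{L^2(B_i)}\leq C\norm{F_A}_{L^2(Z)}$.

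Next I would upgrade these to $\mathcal{C}^k$ bounds by elliptic bootstrapping. In the Coulomb gauge the ASD equation becomes the elliptic quasilinear system $d^*a_i=0$, $d^+a_i=-(a_i\wedge a_i)^+$. Standard interior elliptic regularity, iterated with Sobolev embedding and with $\varepsilon_1$ small enough to absorb the quadratic nonlinearity, yields $\norm{a_i}_{\mathcal{C}^k(B_i')}\leq C_k\norm{F_A}_{L^2(Z)}$ on slightly shrunk balls $B_i'$ whose union still covers $W$.

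The final step is to glue the local gauges into a single trivialization. On each overlap $B_i'\cap B_j'$ the transition $g_{ij}=s_is_j^{-1}$ into $SU(2)$ satisfies $dg_{ij}=a_ig_{ij}-g_{ij}a_j$, so $g_{ij}$ is $\mathcal{C}^k$-close to some constant $c_{ij}\in SU(2)$ with error controlled by $\norm{F_A}_{L^2(Z)}$. After refining the cover to be a good cover whose nerve has $\pi_1=1$ (which is possible because $\pi_1(Z)=1$), the \v{C}ech $1$-cocycle $\{c_{ij}\}$ with values in the constant sheaf $SU(2)$ is a coboundary; replacing each $s_i$ by $c_is_i$ for suitable constants $c_i\in SU(2)$, I may assume every $c_{ij}$ is arbitrarily close to $1$. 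A non-linear implicit function / \v{C}ech fixed-point argument, whose linearization is exactly the elliptic problem treated above, then produces a single gauge $\tilde{s}$ on a neighborhood of $W$ yielding the desired connection matrix $\tilde{A}$ with the required $\mathcal{C}^k$ estimate. The main obstacle is precisely this gluing step: the non-abelian nature of $SU(2)$ makes trivialization of the cocycle more delicate than in the abelian case, and one must simultaneously use the simply-connectedness of $Z$ to kill the cohomology class and the $L^2$-smallness of $F_A$ to run the non-linear correction. The other steps are standard interior elliptic analysis, so the geometric content of the proposition lives entirely in this global patching.
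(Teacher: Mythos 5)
The paper does not actually prove this proposition: it is quoted verbatim from Fukaya \cite{Fukaya}, so there is no internal argument to compare with line by line. Your overall strategy --- Uhlenbeck Coulomb gauges on small balls with $\norm{a_i}_{L^2_1}\leq C\norm{F_A}_{L^2}$, elliptic bootstrapping of the system $d^*a_i=0$, $d^+a_i=-(a_i\wedge a_i)^+$, and patching of the local gauges --- is the standard route, and it is the same patching scheme the paper itself carries out in the linear-chain situation of Lemma \ref{lemma: gauge over the tube}. The first two steps are fine.

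The gap is in the patching step, at the precise point where the hypotheses ``$Z$ simply connected'' and ``$\norm{F_A}_{L^2(Z)}$ small'' must enter. You cover only a neighborhood $N$ of $W$ and assert that the good cover can be refined so that its nerve is simply connected ``because $\pi_1(Z)=1$''. This is false: the nerve of a good cover of $N$ has the homotopy type of $N$, and a neighborhood of a compact subset $W$ of a simply connected $Z$ need not be simply connected (take $W$ a circle in $S^4$). Worse, the statement your argument would then establish --- control of $\tilde{A}$ near $W$ using only the smallness of $F_A$ on $N$ --- is genuinely false: a connection on the trivial bundle over $S^4$ can be flat on a tubular neighborhood of a circle and still have nontrivial holonomy around that circle, and such a connection admits no small connection matrix there. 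What actually makes the cocycle trivializable is quantitative rather than soft: every loop of the nerve bounds a disk in $Z$ (this is where $\pi_1(Z)=1$ is used), and the defect of the approximate cocycle $\{c_{ij}\}$ around that loop is bounded by the curvature integrated over the spanning disk, hence by $C\norm{F_A}_{L^2(Z)}$ --- the norm over all of $Z$, not over $N$. Equivalently, one should run the covering argument over a compact deformation retract $Z_\delta\subset \mathrm{Int}(Z)$ of $Z$ containing $W$ in its interior (which \emph{is} simply connected), rather than over a small neighborhood of $W$. With that correction, your iteration/fixed-point argument passing from the approximate cocycle to an exact single trivialization is the right, and indeed the genuinely delicate non-abelian, final step.
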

\begin{proof}
See Fukaya \cite[Proposition 3.1, Remark 3.2]{Fukaya}.
\end{proof}
\begin{lemma}\label{lemma: gauge over the tube}
Let $L>2$.
There exist positive numbers $\varepsilon_2$ and $C_{2,k}$ $(k\geq 0)$ independent of $L$ satisfying the following.
If $A$ is an ASD connection on the product principal $SU(2)$-bundle $G$ over $(0,L)\times S^3$ satisfying 
$\norm{F_A}_{L^2((0,L) \times S^3)} \leq \varepsilon_2$,
then $A$ can be represented by a connection matrix $\tilde{A}$ over a neighborhood of $[1,L-1]\times S^3$
satisfying 
\begin{equation} \label{eq: gauge over the tube}
 |\nabla^k \tilde{A}(t,\theta)| \leq C_{2,k}\norm{F_A}_{L^2((t-1,t+1)\times S^3)}, 
\end{equation}
for $(t, \theta) \in [1,L-1]\times S^3$ and $k\geq 0$.
\end{lemma}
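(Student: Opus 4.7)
The plan is to reduce to Proposition \ref{prop: almost flat ASD connections} applied to pieces of the cylinder of fixed size (independent of $L$), exploiting the translation invariance of the metric on $(0,L)\times S^3$ to keep all constants uniform, and then to patch the resulting local gauges using the fact that transition functions on overlaps are approximately constant.

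Fix a small $\delta > 0$ (say $\delta = 1/10$) and cover $[1, L-1] \times S^3$ by windows $W_n := [2n\delta, 2(n+1)\delta] \times S^3$, each contained in a slightly larger frame $Z_n := (2(n-1)\delta, 2(n+2)\delta) \times S^3$. By the choice of $\delta$ we have $Z_{n-1} \cup Z_n \cup Z_{n+1} \subset (t-1, t+1) \times S^3$ for every $(t, \theta) \in W_n$, while each pair $(W_n, Z_n)$ is a translate of the single model $([0, 2\delta] \times S^3, (-2\delta, 4\delta) \times S^3)$. Hence Proposition \ref{prop: almost flat ASD connections} supplies uniform constants $\varepsilon_1$ and $C_{1,k}$. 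Setting $\varepsilon_2 := \varepsilon_1$, I obtain on each $Z_n$ a gauge $u_n$ such that $\tilde{A}_n := u_n^{-1} A u_n + u_n^{-1} du_n$ satisfies
\[
\norm{\tilde{A}_n}_{C^k(W_n)} \le C_{1,k}\, \norm{F_A}_{L^2(Z_n)} \le C_{1,k}\, \norm{F_A}_{L^2((t-1, t+1) \times S^3)}
\]
for all $(t, \theta) \in W_n$.

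Next, on the overlap $W_n \cap W_{n+1}$ the two local gauges differ by $h_n := u_n u_{n+1}^{-1}$, which satisfies $h_n^{-1} d h_n = \tilde{A}_{n+1} - h_n^{-1} \tilde{A}_n h_n$ and hence has small derivative. Integrating along the overlap, $h_n$ is uniformly close to a constant $c_n \in SU(2)$; right-multiplying each $u_n$ by a suitable product of the $c_m$'s (which does not affect $\tilde{A}_n$) makes each $h_n$ close to the identity. Finally I would splice the $u_n$ into a single gauge $u$ on a neighborhood of $[1, L-1] \times S^3$ using a partition of unity $\{\chi_n\}$ subordinate to $\{W_n\}$, e.g.\ by $u = \exp\bigl(\sum_n \chi_n \log u_n\bigr)$, where the logarithms are well defined by the previous step. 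The resulting connection matrix $\tilde{A}$ differs from $\tilde{A}_n$ on $W_n$ by terms bounded pointwise by derivatives of $\chi_n$ times $|u_m - u_n|$ for the at-most-two neighbors $m = n \pm 1$, and these are in turn bounded by $\norm{F_A}_{L^2(Z_{n-1} \cup Z_n \cup Z_{n+1})} \le \norm{F_A}_{L^2((t-1, t+1) \times S^3)}$.

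The main obstacle will be the $C^k$ bookkeeping in the patching step: I need to upgrade the $C^0$-smallness of $h_n - c_n$ and of $u$ to $C^{k+1}$ bounds by bootstrapping the ODE $d u_n = A u_n - u_n \tilde{A}_n$ from the $C^k$ bounds on $\tilde{A}_n$, and then verify that every error term introduced by the partition of unity is controlled pointwise by $\norm{F_A}_{L^2((t-1, t+1) \times S^3)}$ rather than by some larger neighborhood. All constants remain independent of $L$ and $n$ by the translation invariance of the cylindrical metric.
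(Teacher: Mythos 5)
Your overall strategy is the same as the paper's: apply Proposition \ref{prop: almost flat ASD connections} to translates of a fixed-size model piece of the cylinder to get local connection matrices with constants uniform in $L$, normalize the transition functions by constant gauge rotations so that they are $\mathcal{C}^0$-close to the identity, upgrade to $\mathcal{C}^k$-closeness from the relation $h_n^{-1}dh_n=\tilde{A}_{n+1}-h_n^{-1}\tilde{A}_nh_n$, and splice. The window/frame decomposition and the check that the relevant frames sit inside $(t-1,t+1)\times S^3$ are correct and correspond to the paper's choice $I_n=[n/4,n/4+1/2]\subset J_n=[n/4-1/4,n/4+3/4]$.

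The one step that fails as written is the splicing formula $u=\exp\bigl(\sum_n\chi_n\log u_n\bigr)$. What your normalization makes small is the transition function $h_n=u_nu_{n+1}^{-1}$, not the local gauges themselves: a gauge $u_n:Z_n\to SU(2)$ produced by Proposition \ref{prop: almost flat ASD connections} has no reason to lie near the identity, so $\log u_n$ is in general undefined and the sum $\sum_n\chi_n\log u_n$ is meaningless. The repair is what the paper does: write each normalized transition function as $h_n=e^{v_n}$ with $|\nabla^k v_n|$ controlled by $\norm{F_A}_{L^2}$ over the two adjacent frames, and interpolate the \emph{transition functions} rather than the gauges, i.e.\ replace $u_n$ by $e^{-\chi v_n}u_n$ on the overlap, where $\chi$ is a cutoff running from $0$ to $1$; this equals $u_n$ at one end and $h_n^{-1}u_n=u_{n+1}$ at the other, so the modified gauges glue to a single trivialization, and the error introduced in the connection matrix is $d(\chi v_n)$ plus lower-order terms, which obeys exactly the pointwise bound by $\norm{F_A}_{L^2((t-1,t+1)\times S^3)}$ you want. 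A minor further point: consecutive windows $W_n=[2n\delta,2(n+1)\delta]\times S^3$ meet only along a slice, so you need genuinely overlapping intervals (as with the paper's $I_n$, which overlap in intervals of length $1/4$) for the cutoff to have room to interpolate.
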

\begin{proof}
Proposition \ref{prop: almost flat ASD connections} implies the following.
There exist positive numbers $\varepsilon'_2$ and $C'_{2,k}$ $(k\geq 0)$ such that 
if $B$ is an ASD connection on the product principal $SU(2)$-bundle over $[0,1]\times S^3$ 
satisfying $\norm{F_B}_{L^2([0,1]\times S^3)}\leq \varepsilon'_2$ then 
$B$ can be represented by a connection matrix $\tilde{B}$ over a neighborhood of $[1/4, 3/4]\times S^3$
satisfying 
\[ |\nabla^k \tilde{B}(x)| \leq C'_{2,k} \norm{F_B}_{L^2([0,1]\times S^3)} 
    \quad (x\in [1/4, 3/4]\times S^3,\, k\geq 0).\]
Let $\varepsilon_2$ be a small positive number with $\varepsilon_2<\varepsilon_2'$.
We will fix $\varepsilon_2$ later.
Suppose that $A$ is an ASD connection on the product principal $SU(2)$-bundle $G$ over $(0,L)\times S^3$ satisfying 
$\norm{F_A}_{L^2((0,L)\times S^3)}\leq \varepsilon_2$.
For $2 \leq n\leq [4L-4]$, set $I_n:=[n/4, n/4+1/2]$ and $J_n:=[n/4-1/4,n/4+3/4]$.
We have $I_n\subset J_n$.
For each $n$, there is a local trivialization $h_n$ of $G$ over a neighborhood of $I_n\times S^3$ such that 
the connection matrix $A_n := h_n(A)$ satisfies
\[ |\nabla^k A_n(x)|\leq C_{2,k}'\norm{F_A}_{L^2(J_n\times S^3)} \quad (x\in I_n\times S^3,\, k\geq 0).\]
Set $g_n:=h_{n+1}h_n^{-1}:(I_n\cap I_{n+1})\times S^3\to SU(2)$. 
Then 
$g_n(A_n) = A_{n+1}$ (i.e. $dg_n = g_n A_n -A_{n+1}g_n$).
In particular $|dg_n|\leq 4C_{2,0}' \varepsilon_2$. 
Fix a reference point $x_0\in S^3$.
By multiplying some constant gauge transformations on $h_n$'s, we can assume that 
$g_n(n/4+1/4,x_0)=1$.
Then
$|g_n-1|\leq \const \cdot \varepsilon_2$ over $(I_n\cap I_{n+1})\times S^3$ where $\const$ is independent of $L,n$.
Since the exponential map $\exp:su(2)\to SU(2)$ is locally diffeomorphic around $0\in su(2)$, 
if $\varepsilon_2$ is sufficiently small (but independent of $L, n$), we have 
$u_n := (\exp)^{-1}g_n: (I_n\cap I_{n+1})\times S^3\to su(2)$.
(Here we have fixed $\varepsilon_2>0$.)
Then
$g_n = e^{u_n}$ over $(I_n\cap I_{n+1})\times S^3$ with 
\[ |\nabla^k u_n(x)|\leq C_{2,k}''\norm{F_A}_{L^2((J_n\cup J_{n+1})\times S^3)} \quad 
   (x\in (I_n\cap I_{n+1})\times S^3, \, k\geq 0).\]
Let $\varphi$ be a smooth function in $\mathbb{R}$ such that $\supp (d\varphi)\subset (1/4, 1/2)$,
$\varphi(t) =0$ for $t\leq 1/4$ and $\varphi=1$ for $t\geq 1/2$.   
Set $\varphi_n(t) := \varphi(t-n/4)$. ($\supp(d\varphi_n)\subset \mathrm{Interior}(I_n\cap I_{n+1})$.)
We define a trivialization $h$ of $G$ over the union of $(I_n\cap I_{n+1})\times S^3$ ($2\leq n\leq [4L-4]$)
by setting $h:= e^{\varphi_n u_n}\circ h_n$ on $(I_n\cap I_{n+1})\times S^3$.
Then $h$ is smoothly defined over a neighborhood of $[1,L-1]\times S^3$, and the connection matrix 
$\tilde{A} := h(A)$ satisfies (\ref{eq: gauge over the tube}).
\end{proof}
Let us go back to the given manifolds $Y$ and $Y_T = p^{-1}(-T+1, T-1)$.
\begin{lemma}\label{lemma: gauge over Y_T}
Let $T> 4$.
There exist positive numbers $\varepsilon_3$ and $C_{3,k}$ $(k\geq 0)$ independent of $T$ satisfying the following.
If $A$ is an ASD connection on the product principal $SU(2)$-bundle over $Y_T$ satisfying $\norm{F_A}_{L^2(Y_T)}\leq \varepsilon_3$,
then $A$ can be represented by a connection matrix $\tilde{A}$ over $Y_{T-1}$ such that 
\[ |\nabla^k \tilde{A}(x)| \leq C_{3,k}\norm{F_A}_{L^2(p^{-1}(t-6,t+6)\cap Y_{T})} \quad (t=p(x)) ,\]
for $x\in Y_{T-1}$ and $k\geq 0$.
\end{lemma}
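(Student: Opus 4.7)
The plan is to decompose $Y_{T-1}$ into a fixed compact core and the two tubular ends, apply Proposition \ref{prop: almost flat ASD connections} on (a neighbourhood of) the core and Lemma \ref{lemma: gauge over the tube} on each end, and then glue the three trivialisations by the same exponential/cutoff device used in the proof of Lemma \ref{lemma: gauge over the tube}. Because all three pieces, together with their overlaps, have $T$-independent size, the resulting $\varepsilon_3$ and $C_{3,k}$ will be $T$-independent.

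Concretely, I set $W := p^{-1}[-2,2]$ and $Z := p^{-1}[-3,3]$, both contained in $Y_T$ (since $T>4$) and both independent of $T$. Since $Y$ is simply-connected and removing two points from a smooth $4$-manifold does not change $\pi_1$, the compact submanifold $Z \subset \hat Y$ is simply-connected, so Proposition \ref{prop: almost flat ASD connections} with $W\subset Z$ yields a trivialisation $h_c$ on a neighbourhood of $W$ with $|\nabla^k \tilde A_c| \le C_{1,k}(W,Z)\,\norm{F_A}_{L^2(Z)}$, provided $\norm{F_A}_{L^2(Y_T)} \le \varepsilon_1(W,Z)$. On each end $Y_T \cap Y_\pm$, an affine reparameterisation puts the tube in the form $(0,L)\times S^3$ with $L=T-2$, and Lemma \ref{lemma: gauge over the tube} supplies trivialisations $h_\pm$ on (a neighbourhood of) the tube portion of $Y_{T-1}$ with the stated local pointwise bound, provided $\norm{F_A}_{L^2} \le \varepsilon_2$.

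The three trivialisations overlap on the $T$-independent strips $S_\pm := p^{-1}[\pm 2,\pm 3]$ (shrunk or enlarged slightly to fit the "neighbourhood of" language in both statements). On $S_\pm$ both $\tilde A_c$ and $\tilde A_\pm$ are pointwise of size $\lesssim \varepsilon_3$, so the transition $g_\pm := h_\pm h_c^{-1}$ satisfies $|dg_\pm| \lesssim \varepsilon_3$. After multiplying $h_\pm$ by appropriate constant gauge transformations, $|g_\pm - 1|\lesssim \varepsilon_3$, and for $\varepsilon_3$ small the Lie-algebra logarithm $u_\pm := \log g_\pm$ is defined on $S_\pm$ with the same pointwise derivative bounds, bounded by $\norm{F_A}_{L^2(Z)} + \norm{F_A}_{L^2(S_\pm\times S^3)}$. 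Fixing smooth cutoffs $\varphi_\pm$ supported in $S_\pm$, I define the global trivialisation $h$ by $h = h_c$ inside $W$, $h = e^{\varphi_\pm u_\pm} h_c$ on $S_\pm$, and $h = h_\pm$ on the rest of the tubes, exactly mirroring the interpolation step in the proof of Lemma \ref{lemma: gauge over the tube}.

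The claimed pointwise estimate for $\tilde A := h(A)$ then follows by cases. For $x$ with $|t| = |p(x)|\le 3$, i.e.\ where $h_c$ or the interpolation is used, one has $Z \subset p^{-1}(t-6,t+6)\cap Y_T$, so $\norm{F_A}_{L^2(Z)}$ is dominated by the right-hand side of (\ref{eq: gauge over the tube}); for $|t|\ge 3$, i.e.\ in the pure tube region, $(t-1,t+1)\times S^3 \subset p^{-1}(t-6,t+6)\cap Y_T$ and the Lemma \ref{lemma: gauge over the tube} bound is exactly what is required. The main obstacle is verifying that $\varepsilon_3$ and $C_{3,k}$ are genuinely uniform in $T$: this reduces to noting that $W$, $Z$, the overlap strips $S_\pm$, and the cutoffs are $T$-independent, while the constants from Lemma \ref{lemma: gauge over the tube} are already $L$-independent by hypothesis, so one may take $\varepsilon_3 := c\cdot\min(\varepsilon_1(W,Z),\varepsilon_2)$ for a small universal $c$ absorbing the logarithm and interpolation estimates.
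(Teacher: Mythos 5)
Your proposal is correct and follows essentially the same route as the paper: trivialise over a fixed, $T$-independent core $W\subset Z=p^{-1}[-3,3]$ via Proposition \ref{prop: almost flat ASD connections}, trivialise over the two tubes via Lemma \ref{lemma: gauge over the tube}, and patch with the exponential/cutoff interpolation from the proof of that lemma, checking the $T$-independence of all constants exactly as you do. The only adjustment needed is the one you already flag: with $W=p^{-1}[-2,2]$ the two trivialisations overlap only on an a priori thin neighbourhood of $p^{-1}(\{\pm 2\})$, so one should enlarge the core to, say, $p^{-1}[-5/2,5/2]$ (as the paper does) to obtain overlap strips of definite width on which the cutoff has bounded derivatives.
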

\begin{proof}
Set $Z:= p^{-1}[-3,3]$ and $W:=p^{-1}[-5/2,5/2] \subset Z$.
We apply Proposition \ref{prop: almost flat ASD connections} to these $Z$ and $W$:
There is $\varepsilon_3' >0$ (depending only on $Z$, $W$ and hence independent of $T$) such that 
if $\norm{F_A}_{L^2(Z)}\leq \varepsilon_3'$
then $A$ can be represented by a connection matrix $A_1$ over a neighborhood of $W$ such that 
\[ |\nabla^k A_1(x)| \leq \const_k \norm{F_A}_{L^2(Z)} \quad (x\in W,\, k\geq 0).\]
On the other hand, by applying Lemma \ref{lemma: gauge over the tube} to 
the tubes $p^{-1}(-T+1, -1) = (-T+1,-1) \times S^3$ and $p^{-1}(1,T-1) = (1,T-1) \times S^3$, 
if $\norm{F_A}_{L^2(Y_T)}\leq \varepsilon_2$ (the positive constant introduced in Lemma \ref{lemma: gauge over the tube})
 then $A$ can be represented by a connection matrix $A_2$ over 
a neighborhood of $p^{-1}[-T+2,-2] \sqcup p^{-1}[2,T-2] = [-T+2,-2]\times S^3 \sqcup [2,T-2]\times S^3$ 
such that 
\[ |\nabla^k A_2(t,\theta)| \leq C_{2,k} \norm{F_A}_{L^2((t-1,t+1)\times S^3)} \quad 
((t, \theta)\in [-T+2,-2]\times S^3 \sqcup [2,T-2]\times S^3, \, k\geq 0).\]
Then by patching $A_1$ and $A_2$ over $p^{-1}(-5/2, -2)$ and $p^{-1}(2,5/2)$ as in the proof of 
Lemma \ref{lemma: gauge over the tube}, we get the desired connection matrix $\tilde{A}$.
\end{proof}

\subsection{Exponential decay} \label{subsection: exponential decay}
In this subsection we study a decay estimate of instantons on 
the product principal $SU(2)$-bundle $E = X\times SU(2)$.
The results in this section will be used in Section \ref{section: proof of the main theorem}.
Our method is based on the arguments of Donaldson \cite[Section 4.1]{Donaldson} and 
Donaldson-Kronheimer \cite[Section 7.3]{Donaldson-Kronheimer}.
In this subsection we always suppose $T>4$.
Let $g$ be a Riemannian metric on $X$ which is equal to $g_0$ (the Riemannian metric given 
in Section \ref{subsection: construction}) outside a compact set.
Let $A$ be a $g$-ASD connection on $E$ satisfying 
\[ \int_X |F_A|_g^2 d\vol_g < \infty.\]
For $t\in \mathbb{R}$, set 
\[ J(t) := \int_{q^{-1}(t,+\infty)} |F_A|_g^2d\vol_g.\]
For $t\gg 1$ we have $J(t) = \int_{q^{-1}(t,+\infty)} |F_A|^2 d\vol$ where 
$|\cdot|$ and $d\vol$ are the norm and volume form with respect to the periodic metric $g_0$.
Recall that for each integer $n$ we have $q^{-1}(nT+1, (n+1)T-1) = Y_T^{(n)}\cap Y^{(n)}_+ = (1,T-1)\times S^3$.
\begin{lemma} \label{lemma: differential inequality for J}
There is $n_0(A)>0$ such that for $n\geq n_0(A)$
\[ J'(t) \leq -2J(t) \quad (nT+2\leq t\leq (n+1)T-2).\]
(The value $-2$ is not optimal.)
\end{lemma}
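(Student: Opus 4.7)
The plan is to pass to the far end of $X$, put $A$ into temporal gauge on each distant tube, and derive the differential inequality from a Chern-Simons computation. Since $\int_X|F_A|_g^2\,d\vol_g<+\infty$, choose $n_0(A)$ so large that for all $n\geq n_0(A)$ the energy of $A$ on the tube $q^{-1}((nT+1,(n+1)T-1)) = (1,T-1)\times S^3$ lies below the threshold $\varepsilon_2$ of Lemma \ref{lemma: gauge over the tube} (note that on this region $g=g_0$ since $g$ agrees with $g_0$ outside a compact set). That lemma produces a smooth trivialization in which $A$ is a connection matrix with pointwise norm controlled by the local $L^2$-norm of $F_A$. A further smooth gauge transformation puts $A$ into temporal gauge on the tube, so $A=A_1(t)$ for $t\in(nT+1,(n+1)T-1)$, where $A_1(t)$ is a $\mathcal{C}^\infty$ connection on $\{t\}\times S^3$, small in every $\mathcal{C}^k$ norm.

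In temporal gauge the curvature splits as $F_A = dt\wedge\dot A_1(t) + F_{A_1(t)}$, and the $g_0$-ASD equation reads $\dot A_1(t) = *_{S^3}F_{A_1(t)}$, giving $|F_A|^2 = |\dot A_1|^2 + |F_{A_1}|^2 = 2|F_{A_1(t)}|^2$ pointwise. Hence
\[
 -J'(t) \;=\; \int_{\{t\}\times S^3}|F_A|^2\,d\vol \;=\; 2\int_{S^3}|F_{A_1(t)}|^2\,d\vol.
\]
The Chern-Simons functional $\mathrm{CS}(A_1) = \tfrac{1}{8\pi^2}\int_{S^3}\mathrm{tr}(A_1\wedge dA_1+\tfrac{2}{3}A_1^3)$ satisfies $\tfrac{d}{dt}\mathrm{CS}(A_1(t)) = \pm\tfrac{1}{4\pi^2}\int_{S^3}|F_{A_1(t)}|^2$ under the ASD relation. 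Using the global product trivialization of $E$ (Remark \ref{remark: remark on main theorem}(iii)), the Chern-Weil identity gives $J(t) = 8\pi^2(\mathrm{CS}(A_1(t)) - \lim_{s\to\infty}\mathrm{CS}(A_1(s)))$; after a large gauge transformation (allowed by Proposition \ref{prop: classification of adapted connections}) we may assume $\lim_{s\to\infty}\mathrm{CS}(A_1(s))=0$, so $J(t) = 8\pi^2\,\mathrm{CS}(A_1(t))$.

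It remains to bound $\mathrm{CS}(A_1(t))$ by $\int_{S^3}|F_{A_1(t)}|^2$. After one more small gauge transformation on the slice $\{t\}\times S^3$ (which does not change $F_{A_1}$), we may assume $d^*A_1(t) = 0$. Corollary \ref{cor: estimate relating to the first eigenvelue of Laplacian on 1-forms}(ii) then bounds the quadratic Chern-Simons term $|\int\mathrm{tr}(A_1\wedge dA_1)|$ by $\tfrac{1}{2}\int_{S^3}|dA_1(t)|^2$, while the cubic term $|\int\mathrm{tr}(A_1^3)|$ is $O(\|A_1(t)\|_{L^\infty}\int_{S^3}|A_1(t)|^2)$ and, by Corollary \ref{cor: estimate relating to the first eigenvelue of Laplacian on 1-forms}(i) together with the smallness supplied by Lemma \ref{lemma: gauge over the tube}, is negligible compared with $\int_{S^3}|F_{A_1(t)}|^2$ once $n_0(A)$ is chosen large. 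Combining,
\[
 J(t) \;=\; 8\pi^2|\mathrm{CS}(A_1(t))| \;\leq\; \left(\tfrac{1}{2}+o(1)\right)\int_{S^3}|F_{A_1(t)}|^2 \;=\; \left(\tfrac{1}{4}+o(1)\right)(-J'(t)),
\]
so for $n\geq n_0(A)$ sufficiently large we obtain $J(t)\leq\tfrac{1}{2}(-J'(t))$, i.e.\ $J'(t)\leq -2J(t)$ on $nT+2\leq t\leq (n+1)T-2$ (in fact with room to spare, consistent with the remark that $-2$ is not optimal).

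The main obstacle will be handling the asymptotic Chern-Simons value cleanly: $J(t)$ is an integral over the whole end $q^{-1}(t,\infty)$, which crosses infinitely many compact pieces $K^{(m)}$ where the temporal gauge cannot be maintained. Working in the global product trivialization of $E$ avoids patching temporal gauges, but one must still check that $\lim_{s\to\infty}\mathrm{CS}(A_1(s))$ exists (using finite energy and the Uhlenbeck-type control from Lemma \ref{lemma: gauge over the tube} on successive far tubes), and that Proposition \ref{prop: classification of adapted connections} furnishes a global gauge transformation normalizing this limit to $0$.
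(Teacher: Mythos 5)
Your overall strategy coincides with the paper's: gauge the connection into a small connection matrix on each far tube via Lemma \ref{lemma: gauge over the tube}, identify $J(t)$ with a Chern--Simons value of the slice connection, and bound that Chern--Simons value by $\norm{F(A_t)}_{L^2(S^3)}^2$ using Corollary \ref{cor: estimate relating to the first eigenvelue of Laplacian on 1-forms}. The estimates in your last step (quadratic term via part (ii), cubic term negligible by smallness and part (i)) are fine and match the paper's.

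The genuine gap is the step $J(t)=8\pi^2\,\mathrm{CS}(A_1(t))$, which you yourself flag as ``the main obstacle'' but do not actually close. The difficulty is this: Stokes' theorem gives an \emph{exact} identity
\[
\int_{q^{-1}[t,s]}|F_A|^2\,d\vol=\theta(\tilde A_s)-\theta(\tilde A_t)
\]
only when $\tilde A$ is a single connection matrix in one trivialization over all of $q^{-1}[t,s]$. If you use the global product trivialization, the slice matrices $\tilde A_t$ need not be small, so the Chern--Simons bound of the final step does not apply to them. If instead you use the small temporal/Coulomb-gauge matrices $A_1(t)$, $A_1(s)$ produced tube by tube, these live in \emph{different} gauges at the two ends, and the identity holds only modulo $8\pi^2\mathbb{Z}$ (the ambiguity being the degrees of the gauge changes). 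Your proposed fix --- normalize $\lim_{s\to\infty}\mathrm{CS}$ to zero by a single large gauge transformation $u_m$ --- removes a constant overall shift but does not show that, for \emph{each} large $t$, the gauge transformation relating the normalized global matrix to the small slice matrix has degree $0$; without that, $\mathrm{CS}(A_1(t))$ could still differ from $J(t)/8\pi^2$ by a nonzero integer, and the inequality $J(t)\leq(\tfrac14+o(1))(-J'(t))$ would fail. The paper closes this with a short but essential argument (the Sublemma inside the proof of Lemma \ref{lemma: differential inequality for J}): the energy over $q^{-1}[t,s]$ and the difference $\theta(A_s)-\theta(A_t)$ of the small-gauge Chern--Simons values agree mod $8\pi^2\mathbb{Z}$ for any choices of gauge, and since \emph{both} quantities are small (by the decay of $J$ and by the bound $\norm{A_t}_{L^2_1}\leq\const\norm{F(A_t)}_{L^2}$ from \cite[Proposition 4.4.11]{Donaldson-Kronheimer}), the congruence upgrades to an exact equality; letting $s\to\infty$ then gives $J(t)=-\theta(A_t)$. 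You need this (or an equivalent degree-tracking argument) to make your proof complete.
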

\begin{proof}
In this proof we always suppose $nT +2\leq t\leq (n+1)T-2$ and $n\gg 1$. We have 
\[ J'(t) = -\int_{q^{-1}(t)}|F_A|^2d\vol = -2\norm{F(A_t)}_{L^2(S^3)}^2 \quad (A_t := A|_{q^{-1}(t)}).\]
Here we have used the fact $|F_A|^2 = 2|F(A_t)|^2$. This is the consequence of the ASD condition.
From Lemma \ref{lemma: gauge over the tube}, 
we can assume that, for $n\gg 1$,  a connection matrix of $A$ over $q^{-1}[nT+2, (n+1)T-2]$ is as small as we want 
with respect to the $\mathcal{C}^1$-norm (or any other $\mathcal{C}^k$-norm).
In particular we have $\norm{F(A_t)}_{L^2} \ll 1$ for $n\gg 1$.
Then, by using \cite[Proposition 4.4.11]{Donaldson-Kronheimer}, we can suppose that 
$A_t$ is represented by a connection matrix satisfying 
\begin{equation} \label{eq: bound on L^2_1 norm}
 \norm{A_t}_{L^2_1(S^3)} \leq \const \norm{F(A_t)}_{L^2(S^3)}.
\end{equation}
Then we can prove 
\begin{sublemma}
 \[ J(t) = -\int_{S^3} \mathrm{tr}(A_t\wedge dA_t+\frac{2}{3}A_t^3)\quad (=: -\theta(A_t)).\]
\end{sublemma}
\begin{proof}
For $m>n\gg 1$ and $mT+2\leq s\leq (m+1)T-2$, 
\begin{equation} \label{eq: Chern-Simon-Storkes mod Z}
 \int_{q^{-1}[t,s]} |F_A|^2 d\vol \equiv \theta(A_s)-\theta(A_t) \mod 8\pi^2 \mathbb{Z}.
\end{equation}
We can suppose that the connection matrix $A_s$ also satisfies (\ref{eq: bound on L^2_1 norm}).
Then both of the left and right hand sides of the above equation (\ref{eq: Chern-Simon-Storkes mod Z}) are sufficiently small.
Hence
\[ \int_{q^{-1}[t,s]} |F_A|^2 d\vol = \theta(A_s)-\theta(A_t).\]
We have $\theta(A_s)\to 0$ as $m\to +\infty$. Then we get the above result.
\end{proof}
From Corollary \ref{cor: estimate relating to the first eigenvelue of Laplacian on 1-forms} (ii), 
\[ \left|\int_{S^3}\mathrm{tr}(A_t\wedge dA_t)\right| \leq \frac{1}{2}\int_{S^3}|dA_t|^2.\]
Since $dA_t = F(A_t)-A_t^2$, 
$\norm{dA_t}_{L^2(S^3)}^2\leq \norm{F(A_t)}_{L^2}^2 + 2\norm{F(A_t)}_{L^2}\norm{A_t^2}_{L^2} + \norm{A_t^2}_{L^2}^2$.
We have $L^2_1(S^3)\hookrightarrow L^6(S^3)$. Hence 
$\norm{A_t^2}_{L^2}\leq \const \norm{A_t}_{L^2_1}^2\leq \const \norm{F(A_t)}_{L^2}^2$ by (\ref{eq: bound on L^2_1 norm}).
Hence $\norm{dA_t}_{L^2}^2 \leq (1+ \const \norm{F(A_t)}_{L^2})\norm{F(A_t)}_{L^2}^2$.
In a similar way, we have 
\[ \left|\int_{S^3}\mathrm{tr}(A_t^3) \right|\leq \const \norm{A_t}_{L^3}^3\leq \const \norm{F(A_t)}_{L^2}^3.\]
Thus we have 
\[ J(t) = -\theta(A_t) \leq \left(\frac{1}{2}+\const\norm{F(A_t)}_{L^2}\right)\norm{F(A_t)}_{L^2}^2.\]
Since $J'(t)= -2\norm{F(A_t)}_{L^2}^2$ and $\norm{F(A_t)}_{L^2}\ll 1$, we have
$J(t) \leq \norm{F(A_t)}_{L^2}^2 = -\frac{1}{2}J'(t)$.
Hence $J'(t)\leq -2J$.
\end{proof}
\begin{corollary} \label{cor: decay estimate of J}
For $t\geq n_0(A)T+2$,
\[ J(t) \leq \const_{A, T} \cdot e^{-2(1-4/T)t}.\]
Here $\const_{A,T}$ is a positive constant depending on $A$ and $T$.
\end{corollary}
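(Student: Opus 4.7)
The plan is to combine the differential inequality from Lemma~\ref{lemma: differential inequality for J}, which only holds on the ``good'' subintervals $[nT+2,(n+1)T-2]$ for $n\ge n_0(A)$, with the bare monotonicity of $J$, which holds everywhere, so as to iterate across consecutive periods and absorb the narrow transition regions in the decay constant.

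First I note that $J'(t)=-\int_{q^{-1}(t)}|F_A|^2\,d\vol\le 0$, so $J$ is non-increasing on all of $\mathbb{R}$. In particular, on each ``bad'' window $[(n+1)T-2,(n+1)T+2]$ of length $4$, where Lemma~\ref{lemma: differential inequality for J} is unavailable, monotonicity still yields $J((n+1)T+2)\le J((n+1)T-2)$. On the good interval $[nT+2,(n+1)T-2]$ of length $T-4$, integrating $J'\le -2J$ gives $J((n+1)T-2)\le J(nT+2)\,e^{-2(T-4)}$. Chaining these two estimates produces the one-period bound $J((n+1)T+2)\le J(nT+2)\,e^{-2(T-4)}$ for every $n\ge n_0(A)$.

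Iterating, $J(nT+2)\le J(n_0(A)T+2)\,e^{-2(T-4)(n-n_0(A))}$ for all integers $n\ge n_0(A)$. For a general $t\ge n_0(A)T+2$ I then pick the integer $n\ge n_0(A)$ with $nT+2\le t\le (n+1)T+2$, use monotonicity to write $J(t)\le J(nT+2)$, and substitute the lower bound $n\ge (t-T-2)/T$ into the iterated estimate to obtain
\[
  J(t)\;\le\;J(n_0(A)T+2)\,e^{2(T-4)(1+2/T+n_0(A))}\,e^{-2(1-4/T)t},
\]
which is the claimed bound, with $\const_{A,T}$ the prefactor, finite because $J(n_0(A)T+2)\le\int_X|F_A|^2\,d\vol<\infty$.

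The only real subtlety is the loss of decay rate in the transition regions where the collars are glued: the clean rate $-2$ is degraded to $-2(1-4/T)$ precisely in order to absorb the width-$4$ gap per period on which Lemma~\ref{lemma: differential inequality for J} fails, and the rate returns to the optimal value $-2$ as $T\to\infty$.
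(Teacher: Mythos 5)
Your proposal is correct and follows essentially the same route as the paper: iterate the per-period contraction $J((n+1)T-2)\le e^{-2(T-4)}J(nT+2)$ coming from the differential inequality on the good intervals, use monotonicity of $J$ to cross the width-$4$ windows where the lemma does not apply, and absorb the resulting loss into the degraded rate $-2(1-4/T)$. The paper additionally exploits the pointwise decay $J(t)\le e^{-2(t-nT-2)}J(nT+2)$ inside each good interval, but this only sharpens the constant, not the rate, so your argument is a valid proof of the stated bound.
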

\begin{proof}
First note that $J(t)$ is monotone non-increasing. 
For $nT+2\leq t\leq (n+1)T-2$ $(n\geq n_0(A) =:n_0)$, we have $J(t)\leq e^{-2(t-nT-2)}J(nT+2)$ 
by Lemma \ref{lemma: differential inequality for J}.

Set $a_n := J(nT+2)$ $(n\geq n_0)$.
$a_{n+1}\leq J((n+1)T-2) \leq e^{-2(T-4)}a_n$.
Hence $a_n\leq e^{-2(T-4)(n-n_0)}a_{n_0}$.

For $nT+2\leq t\leq (n+1)T-2$, 
$J(t) \leq e^{-2(t-nT-2)}a_n\leq e^{-2(t-4n)}e^{4+2n_0T-8n_0}a_{n_0}$.
Since $t\geq nT+2$, we have $t-4n \geq (1-4/T)t+8/T$. Hence 
$J(t)\leq \const_{A, T}e^{-2(1-4/T)t}$.

For $(n+1)T-2< t< (n+1)T+2$, 
$J(t)\leq J((n+1)T-2) \leq \const'_{A, T}e^{-2(1-4/T)t}$.
\end{proof}
In the same way we can prove the following.
\begin{lemma} \label{lemma: decay estimate of energy for minus direction}
For $t\gg 1$ we have 
\[ \int_{q^{-1}(-\infty, -t)} |F_A|^2 d\vol \leq \const_{A, T} \cdot e^{-2(1-4/T)t}.\]
\end{lemma}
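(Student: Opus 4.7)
The plan is to mirror the argument of Lemma \ref{lemma: differential inequality for J} and Corollary \ref{cor: decay estimate of J} for the negative tail. Set
\[ \tilde J(t) := \int_{q^{-1}(-\infty, -t)} |F_A|^2 d\vol \quad (t \gg 1). \]
Since $g$ coincides with the periodic metric $g_0$ outside a compact set, $\tilde J$ is well-defined, monotone non-increasing in $t$, and satisfies $\tilde J'(t) = -\int_{q^{-1}(-t)} |F_A|^2 d\vol = -2\norm{F(A_{-t})}_{L^2(S^3)}^2$ (using the $g$-ASD condition to convert $|F_A|^2$ into twice the pointwise norm-squared of the tangential component $F(A_{-t})$, where $A_{-t} := A|_{q^{-1}(-t)}$).

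The core step is to establish the analog $\tilde J'(t) \leq -2 \tilde J(t)$ on each interior period interval $[nT + 2, (n+1)T - 2]$ for $n \geq n_1(A)$ with $n_1(A)$ sufficiently large. First I would apply Lemma \ref{lemma: gauge over the tube} to the tube $(-(n+1)T + 1, -nT - 1) \times S^3$: for $n \gg 1$, the local $L^2$-energy is small by finiteness of the total energy, so $A$ admits a $\mathcal{C}^k$-small connection matrix on $q^{-1}[-(n+1)T + 2, -nT - 2]$. Combined with \cite[Proposition 4.4.11]{Donaldson-Kronheimer}, each slice connection $A_{-t}$ is gauge-equivalent on $S^3$ to a matrix obeying $\norm{A_{-t}}_{L^2_1(S^3)} \leq \const \norm{F(A_{-t})}_{L^2(S^3)}$. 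Then the analog of the internal sub-lemma, $\tilde J(t) = \pm\, \theta(A_{-t})$, follows from Stokes' theorem applied to $\int_{q^{-1}[-s, -t]} \mathrm{tr}(F_A^2) = \theta(A_{-t}) - \theta(A_{-s})$, letting $s \to \infty$ and noting that $\theta(A_{-s}) \to 0$ by the uniform smallness of the gauge far out. Since $\tilde J(t) \geq 0$, whatever sign appears we may write $\tilde J(t) = |\theta(A_{-t})|$, and then Corollary \ref{cor: estimate relating to the first eigenvelue of Laplacian on 1-forms} (ii) together with the Sobolev embedding $L^2_1(S^3) \hookrightarrow L^6(S^3)$ (used to control the cubic term) yields exactly as in Lemma \ref{lemma: differential inequality for J}:
\[ \tilde J(t) \leq \left(\tfrac{1}{2} + \const \norm{F(A_{-t})}_{L^2}\right) \norm{F(A_{-t})}_{L^2(S^3)}^2 = -\tfrac{1}{4}\left(1 + o(1)\right) \tilde J'(t), \]
whence $\tilde J'(t) \leq -2\tilde J(t)$.

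Finally the exponential decay $\tilde J(t) \leq \const_{A, T} e^{-2(1 - 4/T)t}$ follows verbatim from the period-by-period iteration of Corollary \ref{cor: decay estimate of J}: set $\tilde a_n := \tilde J(nT + 2)$, integrate the ODE across each period to get $\tilde a_{n+1} \leq e^{-2(T - 4)} \tilde a_n$, interpolate across the short transition intervals $((n+1)T - 2, (n+1)T + 2)$ using monotonicity of $\tilde J$, and absorb the resulting constants into $\const_{A, T}$. The only delicate point is tracking the sign of $\theta(A_{-t})$ in the analog sub-lemma, where the orientation of the slice $q^{-1}(-t)$ as a boundary of $q^{-1}(-\infty, -t)$ differs from the positive-direction convention; however, since only $|\theta(A_{-t})| = \tilde J(t)$ enters the final estimate, this sign issue is harmless.
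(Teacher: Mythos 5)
Your proposal is correct and is exactly the argument the paper intends: the paper gives no separate proof of this lemma, stating only ``In the same way we can prove the following,'' and your mirror of Lemma \ref{lemma: differential inequality for J} and Corollary \ref{cor: decay estimate of J} for the negative tail (including the harmless sign flip of $\theta(A_{-t})$ coming from the boundary orientation, which indeed gives $\tilde J(t)=+\theta(A_{-t})\geq 0$) is the intended proof.
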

\begin{corollary} \label{cor: existence of adapted connection to which instanton converges exponentially} 
There exists an adapted connection $A_0$ on $E$ satisfying 
\[ |\nabla_{A_0}^k(A(x)-A_0(x))| \leq \const_{k,A,T}\cdot e^{-(1-4/T)|t|} \quad (t=q(x)), \]
for all integers $k\geq 0$.
\end{corollary}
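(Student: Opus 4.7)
The plan is to combine the exponential decay of the curvature energy established in Corollary \ref{cor: decay estimate of J} and Lemma \ref{lemma: decay estimate of energy for minus direction} with the gauge-fixing Lemma \ref{lemma: gauge over Y_T} to produce, on each end of $X$, a smooth trivialization of $E$ in which the connection matrix of $A$ decays exponentially in $|q|$. The desired adapted connection $A_0$ will then be defined as the product flat connection in these end trivializations, suitably extended across a compact middle region.

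First I would fix $N$ large enough that $\norm{F_A}_{L^2(Y_T^{(n)})} \leq \varepsilon_3$ for all $|n| \geq N$, where $\varepsilon_3$ is the constant of Lemma \ref{lemma: gauge over Y_T}. That lemma then produces, for each such $n$, a trivialization of $E$ over $Y_{T-1}^{(n)}$ in which the connection matrix $\tilde A^{(n)}$ of $A$ satisfies
\[ |\nabla^k \tilde A^{(n)}(x)| \leq C_{3,k}\,\norm{F_A}_{L^2(q^{-1}(s-6,s+6))}, \qquad s = q(x). \]
Corollary \ref{cor: decay estimate of J} and its analogue for the negative end yield $\norm{F_A}_{L^2(q^{-1}(s-6,s+6))} \leq \const\cdot e^{-(1-4/T)|s|}$, so each $\tilde A^{(n)}$ enjoys the full exponential estimate in every $\mathcal C^k$-norm.

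Next I would patch these local trivializations across the overlap tubes via exactly the mechanism of Lemma \ref{lemma: gauge over the tube}. The transition function $g_n$ between the frames producing $\tilde A^{(n)}$ and $\tilde A^{(n+1)}$ satisfies $dg_n = g_n \tilde A^{(n)} - \tilde A^{(n+1)} g_n$, hence is exponentially close to a constant element of $SU(2)$; after correcting by that constant, $g_n = \exp(u_n)$ with $u_n$ satisfying the same exponential $\mathcal C^k$-bounds, and a fixed cutoff profile yields a single smooth trivialization $h_+$ of $E$ over an end $\{q \geq M_+\}$ in which $h_+(A)$ obeys the desired exponential bound. The analogous construction at the opposite end produces a trivialization $h_-$ over $\{q \leq M_-\}$.

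Finally, let $\rho$ denote the product flat connection on $E$. I would define $A_0$ to equal $h_+^{-1}(\rho)$ on $\{q \geq M_+\}$, $h_-^{-1}(\rho)$ on $\{q \leq M_-\}$, and interpolate to an arbitrary fixed smooth connection on the compact middle using the affine structure on the space of connections. By construction $A_0$ is flat outside a compact set, so it is adapted. On each end, in the trivialization $h_\pm$, we have $A_0 \equiv 0$ and $A - A_0 = h_\pm(A)$; since $\nabla_{A_0}^k$ reduces to the iterated covariant derivative in this flat frame, the required pointwise estimate is immediate from the bounds on $h_\pm(A)$. The main obstacle is the inductive patching: it has to be carried out over infinitely many overlaps without accumulating error, but the exponential decay of the input data together with the bounded cutoff profile from Lemma \ref{lemma: gauge over the tube} ensures that each gluing contributes only an error of the same exponential order as $\tilde A^{(n)}$ itself, so the estimate propagates uniformly in $n$.
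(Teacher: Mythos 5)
Your proposal is correct and follows essentially the same route as the paper: gauge-fix on each block $Y^{(n)}_{T-1}$ via Lemma \ref{lemma: gauge over Y_T}, feed in the exponential energy decay from Corollary \ref{cor: decay estimate of J} and Lemma \ref{lemma: decay estimate of energy for minus direction}, patch the trivializations over the overlaps by the constant-normalization-plus-cutoff mechanism of Lemma \ref{lemma: gauge over the tube} (the disjointness of the overlap supports is exactly why no error accumulates), and take $A_0$ to be the pullback of the product connection on the ends. The only cosmetic difference is that you build two end trivializations $h_\pm$ where the paper uses a single $h$ on the (already disconnected) region $\{|t|>t_0\}$.
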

\begin{proof}
For $|n| \gg 1$, we have $\norm{F_A}_{L^2(Y^{(n)}_T)} \leq \varepsilon_3$. 
($\varepsilon_3$ is a positive constant introduced in Lemma \ref{lemma: gauge over Y_T}.)
Then by Lemma \ref{lemma: gauge over Y_T}, Corollary \ref{cor: decay estimate of J} and 
Lemma \ref{lemma: decay estimate of energy for minus direction}, 
$A$ can be represented by a connection matrix $A_n$
on $Y^{(n)}_{T-1}$ $(|n|\gg 1)$ such that 
\[ |\nabla^k A_n (x)|\leq \const_{k,A,T} \cdot e^{-(1-4/T)|t|} \quad (x\in Y^{(n)}_{T-1},\, t=q(x), \, k\geq 0).\]
By patching these connection matrices over $Y^{(n)}_{T-1} \cap Y^{(n+1)}_{T-1}$ $(|n|\gg 1)$
as in the proof of Lemma \ref{lemma: gauge over the tube}, 
$A$ can be represented by a connection matrix $\tilde{A}$ on $\{|t|\gg 1\}$ such that 
\[ |\nabla^k \tilde{A}(x)| \leq \const_{k,A,T}\cdot e^{-(1-4/T)|t|}\quad (|t|\gg 1, \, k\geq 0).\]
To be more precise, there are $t_0 \gg 1$ and a trivialization 
$h:E|_{\{|t|> t_0\}} \to \{|t|>t_0\}\times SU(2)$ such that 
$h(A)$ satisfies
\[ |\nabla_\rho^k (h(A)-\rho)| \leq \const_{k, A, T}\cdot e^{-(1-4/T)|t|} \quad (|t| > t_0,\, k\geq 0),\]
where $\rho$ is the product connection. 
This means that 
\[ |\nabla_{h^{-1}(\rho)}^k (A-h^{-1}(\rho)) |\leq \const_{k,A,T}\cdot e^{-(1-4/T)|t|} \quad (|t| > t_0,\, k\geq 0).\]
Take a connection $A_0$ on $E$ which is equal to $h^{-1}(\rho)$ over $\{|t|\geq t_0 +1\}$.
Then $A_0$ is an adapted connection satisfying the desired property.
\end{proof}

\section{Preliminaries for linear theory} \label{section: preliminaries for linear theory}
In this section, we study differential operators over $X$.
The results in this section will be used in Section \ref{section: linear theory}.
All arguments in Sections \ref{subsection: preliminary estmates over the tube} 
and \ref{subsection: preliminary results over Y^hat} are essentially given in 
Donaldson \cite[Chapters 3 and 4]{Donaldson}.
\subsection{Preliminary estimates over the tube} \label{subsection: preliminary estmates over the tube}
Let $\alpha$ be a real number with $0<|\alpha|<1$. 
In this subsection we study some differential operators over $\mathbb{R}\times S^3$.
We denote $t$ as the parameter of the $\mathbb{R}$-factor 
(i.e. the natural projection $t:\mathbb{R}\times S^3\to \mathbb{R}$).
Let $d^*:\Omega^1_{\mathbb{R}\times S^3}\to \Omega^0_{\mathbb{R}\times S^3}$ 
be the formal adjoint of the derivative $d:\Omega^0_{\mathbb{R}\times S^3}\to \Omega^1_{\mathbb{R}\times S^3}$ 
over $\mathbb{R}\times S^3$.
We have $d^* = -*d*$ where $*$ is the Hodge star over $\mathbb{R}\times S^3$.
We define a differential operator $d^{*,\alpha}:\Omega^1_{\mathbb{R}\times S^3}\to \Omega^0_{\mathbb{R}\times S^3}$ by setting 
$d^{*,\alpha} b: = e^{-2\alpha t}d^{*}(e^{2\alpha t}b)$ $(b\in \Omega^1_{\mathbb{R}\times S^3})$.
Then
\[ d^{*,\alpha}b = d^*b -2\alpha *(dt\wedge *b).\]
If $f\in \Omega^0_{\mathbb{R}\times S^3}$ and $b\in \Omega^1_{\mathbb{R}\times S^3}$ have 
compact supports, then 
\[ \int_{\mathbb{R}\times S^3}e^{2\alpha t}\langle df, b\rangle d\vol 
= \int_{\mathbb{R}\times S^3} e^{2\alpha t}\langle f, d^{*,\alpha}b\rangle d\vol.\]
Consider $d^+ := \frac{1}{2}(1+*)d :\Omega^1_{\mathbb{R}\times S^3}\to \Omega^+_{\mathbb{R}\times S^3}$, and set 
$D^\alpha := -d^{*,\alpha}+d^+ :\Omega^1_{\mathbb{R}\times S^3} \to 
\Omega^0_{\mathbb{R}\times S^3}\oplus \Omega^+_{\mathbb{R}\times S^3}$.

Let $\Lambda^i_{S^3}$ $(i\geq 0)$ be the bundle of $i$-forms over $S^3$.
Consider the pull-back of $\Lambda^i_{S^3}$ by the projection $\mathbb{R}\times S^3\to S^3$, 
and we also denote it as $\Lambda^i_{S^3}$ for simplicity.
We can identify the bundle $\Lambda^1_{\mathbb{R}\times S^3}$ of 
$1$-forms on $\mathbb{R}\times S^3$ with the bundle $\Lambda^0_{S^3}\oplus \Lambda^1_{S^3}$ by
\[ \Lambda^0_{S^3}\oplus \Lambda^1_{S^3} \ni (b_0, \beta) \longleftrightarrow 
     b_0dt + \beta \in \Lambda^1_{\mathbb{R}\times S^3}.\]
We also naturally identify the bundle $\Lambda^0_{\mathbb{R}\times S^3}$ with $\Lambda^0_{S^3}$.
The bundle $\Lambda^+_{\mathbb{R}\times S^3}$ of self-dual forms can be identified with the bundle $\Lambda^1_{S^3}$ by 
\[ \Lambda_{S^3}^1\ni \beta \longleftrightarrow 
  \frac{1}{2}(dt\wedge \beta + *_3\beta) \in \Lambda^+_{\mathbb{R}\times S^3} \quad 
    (\text{$*_3$: the Hodge star on $S^3$}).\]
We define $L:\Gamma (\Lambda^0_{S^3}\oplus \Lambda^1_{S^3}) \to \Gamma (\Lambda^0_{S^3}\oplus \Lambda^1_{S^3})$
by setting 
\[ L \begin{pmatrix} b_0 \\ \beta \end{pmatrix} := 
  \begin{pmatrix} 0 & -d_3^*\\ -d_3 & *_3d_3\end{pmatrix} 
  \begin{pmatrix} b_0 \\ \beta \end{pmatrix} ,\]
where $d_3$ is the exterior derivative on $S^3$ and $d_3^* = -*_3d_3*_3$.
Let $b = (b_0, \beta)\in \Gamma(\Lambda^0_{S^3}\oplus\Lambda^1_{S^3}) =\Omega^1_{\mathbb{R}\times S^3}$
(i.e. $b=b_0 dt + \beta$).
Then $D^\alpha b\in \Omega^0_{\mathbb{R}\times S^3}\oplus \Omega^1_{\mathbb{R}\times S^3} 
= \Gamma(\Lambda^0_{S^3}\oplus \Lambda^1_{S^3})$ is given by 
\[ D^\alpha b = \frac{\partial}{\partial t}\begin{pmatrix} b_0 \\ \beta \end{pmatrix}
   + \left(L + \begin{pmatrix} 2\alpha & 0 \\ 0 & 0 \end{pmatrix}\right)
    \begin{pmatrix} b_0\\ \beta \end{pmatrix} .\]
For $u\in \Omega^i_{\mathbb{R}\times S^3}$ $(i\geq 0)$, 
we define the Sobolev norm $\norm{u}_{L^{2}_k}$ $(k\geq 0)$ by 
\begin{equation} \label{eq: sobolev norm}
 \norm{u}^2_{L^{2}_k} := \sum_{j=0}^k \int_{\mathbb{R}\times S^3} |\nabla^j u|^2 d\vol .
\end{equation}
We define the weighted Sobolev norm $\norm{u}_{L^{2,\alpha}_k}$ by 
\begin{equation} \label{eq: weighted Sobolev norm}
 \norm{u}_{L^{2,\alpha}_k} := \norm{e^{\alpha t}u}_{L^2_k}.
\end{equation}
The map 
$L^{2,\alpha}_k(\mathbb{R}\times S^3, \Lambda^i_{\mathbb{R}\times S^3}) 
\ni u\mapsto e^{\alpha t}u\in L^2_k(\mathbb{R}\times S^3,\Lambda^i_{\mathbb{R}\times S^3})$ is an isometry.

$D^\alpha$ becomes a bounded linear map from 
$L^{2,\alpha}_{k+1}(\mathbb{R}\times S^3, \Lambda^1_{\mathbb{R}\times S^3})$
to $L^{2,\alpha}_k(\mathbb{R}\times S^3, \Lambda^0_{\mathbb{R}\times S^3}\oplus \Lambda^+_{\mathbb{R}\times S^3})$.
For $b = b_0 dt + \beta \in \Omega^1_{\mathbb{R}\times S^3}$ as above, we have 
\begin{equation} \label{eq: variant of D^alpha} 
 e^{\alpha t} D^{\alpha}(e^{-\alpha t}b) = \frac{\partial}{\partial t} \begin{pmatrix} b_0 \\ \beta \end{pmatrix}
   +  \left(L + \begin{pmatrix} \alpha & 0 \\ 0 & -\alpha \end{pmatrix}\right)
    \begin{pmatrix} b_0\\ \beta \end{pmatrix} .
\end{equation}
Set
\[ L^\alpha := L + \begin{pmatrix} \alpha & 0 \\ 0 & -\alpha \end{pmatrix}.\]
Recall that we have assumed $0<|\alpha|<1$.
\begin{lemma} \label{lemma: eigenvalue of L^alpha}
Consider $L^\alpha$ as an essentially self-adjoint elliptic differential operator acting on 
$\Omega_{S^3}^0\oplus\Omega^1_{S^3}$ over $S^3$.
If $\lambda$ is an eigenvalue of $L^\alpha$, then $|\lambda|\geq |\alpha|$.
Moreover if $\lambda\neq \alpha$, then $|\lambda| >1$.
\end{lemma}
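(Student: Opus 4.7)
The plan is to reduce the eigenvalue problem $L^\alpha(b_0,\beta) = \lambda(b_0,\beta)$ to scalar eigenvalue problems on $S^3$ and then invoke the bounds from Section \ref{subsection: Eigenvalues of Laplacians on differential forms over S^3}. Writing the two components explicitly, the eigenvalue equation reads
\[
 d_3^*\beta = (\alpha-\lambda)\,b_0, \qquad *_3 d_3 \beta - d_3 b_0 = (\lambda+\alpha)\,\beta.
\]
Applying $d_3^*$ to the second equation and using $d_3^*(*_3 d_3\beta) = -*_3 d_3 *_3 *_3 d_3 \beta = -*_3 d_3 d_3 \beta = 0$, the reduced scalar equation
\[
 \Delta b_0 = (\lambda^2 - \alpha^2)\,b_0 \quad \text{on } S^3
\]
follows. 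This reduces everything to classical eigenvalue bounds on $S^3$.

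Now I would split into three cases. First, if $b_0 \equiv 0$, then $\beta$ is coexact (by the first equation) and satisfies $*_3 d_3 \beta = (\lambda+\alpha)\,\beta$; applying $*_3$ gives $d_3\beta = (\lambda+\alpha)\, *_3 \beta$, so $|d_3\beta|^2 = (\lambda+\alpha)^2|\beta|^2$ pointwise, and feeding this into Corollary \ref{cor: estimate relating to the first eigenvelue of Laplacian on 1-forms}(i) yields $(\lambda+\alpha)^2 \geq 4$, whence $|\lambda| \geq 2 - |\alpha| > 1$ since $|\alpha|<1$. Second, if $b_0$ is non-constant, Lemma \ref{lemma: first non-zero eigenvalue of Laplacians on functions} forces $\lambda^2 - \alpha^2 \geq 3$, so $|\lambda| \geq \sqrt{3+\alpha^2} > 1$. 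Third, if $b_0$ is a non-zero constant, then $\lambda^2 = \alpha^2$; the possibility $\lambda = -\alpha$ is excluded by integrating the first equation over $S^3$, which gives $0 = \int_{S^3} d_3^*\beta\,d\vol = 2\alpha\, b_0\,\vol(S^3) \neq 0$, a contradiction, so $\lambda = \alpha$.

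Combining the three cases, either $\lambda = \alpha$ (giving $|\lambda|=|\alpha|$) or $|\lambda|>1>|\alpha|$, which yields both conclusions. The only step requiring any care is the sign bookkeeping in the Hodge-star identities on $S^3$ (notably $d_3^* = -*_3 d_3 *_3$ on $1$-forms, which makes $d_3^* *_3 d_3 = 0$); no genuine analytic difficulty arises beyond invoking the $S^3$-eigenvalue bounds already at our disposal. The slight subtlety to keep in mind is that the constant-eigenfunction case of $\Delta$ forces the single exceptional eigenvalue $\lambda = \alpha$, while the integration trick eliminates its mirror $\lambda = -\alpha$, so the asymmetry between the two claims ($|\lambda|\geq|\alpha|$ versus the sharper $|\lambda|>1$ away from $\lambda=\alpha$) is dictated precisely by that single eigenvector $(b_0,\beta) = (\mathrm{const},\,\text{coexact eigenfunction of } *_3 d_3)$.
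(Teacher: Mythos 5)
Your proof is correct and follows essentially the same route as the paper: both arguments reduce to the scalar equation $\Delta_3 b_0=(\lambda^2-\alpha^2)b_0$ together with the eigenvalue bounds of Lemma \ref{lemma: first non-zero eigenvalue of Laplacians on functions} and Corollary \ref{cor: estimate relating to the first eigenvelue of Laplacian on 1-forms}, and exclude $\lambda=-\alpha$ by the same integration trick. The only difference is organizational: the paper first splits $\Omega^0_{S^3}\oplus\Omega^1_{S^3}$ into the two $L^\alpha$-invariant summands $(\Omega^0_{S^3}\oplus d_3\Omega^0_{S^3})\oplus\ker d_3^*$ and treats each separately, whereas you work with an arbitrary eigenvector and case-split on $b_0$, which is equally valid.
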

\begin{proof}
We have 
\[ \Omega^0_{S^3}\oplus \Omega^1_{S^3} = (\Omega^0_{S^3}\oplus d_3(\Omega^0_{S^3}))\oplus \ker d_3^*,\]
where $d_3^* = -*_3 d_3*_3:\Omega^1_{S^3}\to \Omega^0_{S^3}$.
The subspaces $\Omega^0_{S^3}\oplus d_3(\Omega^0_{S^3})$ and $\ker d_3^*$ are both $L^\alpha$-invariant.

For $\beta\in \ker d_3^*$, $L^\alpha (0, \beta) = (0, *_3d_3\beta -\alpha \beta)$.
Suppose that $L^\alpha (0,\beta) = \lambda (0, \beta)$ and $\beta$ is not zero. 
Since $d_3^*\beta=0$ and $H^1(S^3)=0$, we have $d_3\beta\neq 0$.
Then $*_3d_3\beta = (\lambda+\alpha)\beta$ and $\lambda+\alpha\neq 0$.
Since we have (Corollary \ref{cor: estimate relating to the first eigenvelue of Laplacian on 1-forms} (ii))
\[ \left|\int_{S^3} \beta\wedge d_3\beta\right| \leq \frac{1}{2}\int_{S^3}|d_3\beta|^2 d\vol \]
and $(\lambda + \alpha) \beta\wedge d_3\beta =|d_3\beta|^2 d\vol$, we have 
\[ 2\leq |\lambda + \alpha|.\]
Then $|\lambda|\geq 2-|\alpha| >1>|\alpha|$.

For $(f, d_3 g)\in \Omega^0_{S^3}\oplus d_3(\Omega^0_{S^3})$ ($f$ and $g$ are smooth functions on $S^3$), 
\[ L^\alpha \begin{pmatrix} f \\ d_3g\end{pmatrix} = \begin{pmatrix} \alpha f - \Delta_3 g \\ -d_3f -\alpha d_3g \end{pmatrix},
  \quad (\text{$\Delta_3 = d_3^*d_3$ is the Laplacian on functions over $S^3$}).\]
Suppose that $L^\alpha (f, d_3g) = \lambda (f, d_3g)$ and $(f,d_3g)$ is not zero. 
Then 
\[ \Delta_3 g = (\alpha-\lambda) f, \quad d_3f= -(\alpha+\lambda)d_3g.\]
\textbf{Case 1}: Suppose $\alpha +\lambda=0$. Then $f$ is a constant, and 
\[  0 = \int_{S^3} \Delta_3 g \, d\vol = 2\alpha \int_{S^3} fd\vol .\]
Hence $f\equiv 0$. This implies $\Delta_3 g\equiv 0$ and hence $d_3 g\equiv 0$.
This is a contradiction.

\noindent
\textbf{Case 2}: Suppose $\alpha+\lambda\neq 0$.
Then $\Delta_3 f = (\lambda^2-\alpha^2)f$.
Since the first non-zero eigenvalue of the Laplacian $\Delta_3$ is $3$ 
(Lemma \ref{lemma: first non-zero eigenvalue of Laplacians on functions}),
$\lambda^2-\alpha^2=0$ or $\lambda^2-\alpha^2\geq 3$.
Since $\lambda\neq -\alpha$, we have 
\[ \lambda = \alpha \quad \text{or} \quad |\lambda|\geq \sqrt{3 + \alpha^2} \geq \sqrt{3}.\]
\end{proof}
\begin{lemma}\label{lemma: L^2,alpha estimate}
For $a\in L^{2,\alpha}_1(\mathbb{R}\times S^3, \Lambda^1_{\mathbb{R}\times S^3})$,
we have $\norm{a}_{L^{2,\alpha}}\leq \sqrt{2}|\alpha|^{-1}\norm{D^\alpha a}_{L^{2,\alpha}}$.
Moreover $\norm{a}_{L^{2,\alpha}_1}\leq \const_\alpha \norm{D^\alpha a}_{L^{2,\alpha}}$.
\end{lemma}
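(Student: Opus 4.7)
The plan is to pass to the unweighted picture via the isometry $u := e^{\alpha t} a$, which identifies $L^{2,\alpha}_k$ with $L^2_k$, and then diagonalize $L^\alpha$ on the compact slice $S^3$. By (\ref{eq: variant of D^alpha}), in the $\Omega^0_{S^3}\oplus\Omega^1_{S^3}$ picture one has $e^{\alpha t}D^\alpha a = (\partial_t + L^\alpha) u$, so the estimate reduces to comparing $\norm{u}_{L^2}$ with $\norm{(\partial_t + L^\alpha) u}_{L^2}$. One has to keep track of norms: the identification $\beta \mapsto \tfrac{1}{2}(dt\wedge\beta + *_3\beta)$ from $\Lambda^1_{S^3}$ to $\Lambda^+_{\mathbb{R}\times S^3}$ is not an isometry (it halves squared norms on the image), so a direct unpacking yields
\[ \norm{(\partial_t + L^\alpha) u}_{L^2(\Lambda^0_{S^3}\oplus \Lambda^1_{S^3})}^2 \leq 2\,\norm{D^\alpha a}_{L^{2,\alpha}}^2, \]
which accounts for the factor $\sqrt{2}$ appearing in the statement.

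For the $L^{2,\alpha}$ bound on $u$, by density it suffices to take $u \in C^\infty_c(\mathbb{R}\times S^3;\Lambda^0_{S^3}\oplus\Lambda^1_{S^3})$. Since $L^\alpha$ is a formally self-adjoint elliptic operator on the closed manifold $S^3$, expand
\[ u(t,\theta) = \sum_\lambda f_\lambda(t)\,\phi_\lambda(\theta) \]
in an orthonormal eigenbasis of $L^\alpha$, so that $(\partial_t + L^\alpha)u = \sum_\lambda (f_\lambda' + \lambda f_\lambda)\phi_\lambda$. Skew-adjointness of $\partial_t$ on $L^2(\mathbb{R})$ gives the scalar identity
\[ \norm{f_\lambda' + \lambda f_\lambda}_{L^2(\mathbb{R})}^2 = \norm{f_\lambda'}_{L^2}^2 + \lambda^2\,\norm{f_\lambda}_{L^2}^2 \geq \lambda^2\,\norm{f_\lambda}_{L^2}^2, \]
and Lemma \ref{lemma: eigenvalue of L^alpha} forces $|\lambda|\geq|\alpha|$ for every eigenvalue. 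Summing over $\lambda$ and using Parseval produces $\norm{u}_{L^2} \leq |\alpha|^{-1}\norm{(\partial_t + L^\alpha)u}_{L^2}$; combined with the factor-$2$ bookkeeping above and the density extension from $C^\infty_c$ to $L^2_1$, this yields $\norm{a}_{L^{2,\alpha}} \leq \sqrt{2}|\alpha|^{-1}\norm{D^\alpha a}_{L^{2,\alpha}}$.

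For the second inequality, I would invoke elliptic regularity: $D^\alpha$ has the same principal symbol as $-d^* + d^+$, differing only by a bounded zeroth-order term that is independent of $t$, so by a standard Gårding estimate applied on overlapping unit cylinders in $\mathbb{R}\times S^3$ and summed against the weight $e^{2\alpha t}$,
\[ \norm{a}_{L^{2,\alpha}_1} \leq \const_\alpha\bigl(\norm{D^\alpha a}_{L^{2,\alpha}} + \norm{a}_{L^{2,\alpha}}\bigr), \]
and the first inequality absorbs the rightmost term. The main technical wrinkle is really the density step together with verifying that the spectral sum converges in the $L^2_1$-sense (so that the pointwise eigenvalue estimate transfers to the global norm bound); once that analytic scaffolding is set up, the reduction to the one-dimensional integration-by-parts identity on $\mathbb{R}$ is routine.
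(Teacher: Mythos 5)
Your proposal is correct and follows essentially the same route as the paper: reduce to compactly supported smooth $a$, conjugate by $e^{\alpha t}$ to get $\partial_t + L^\alpha$, track the factor $\sqrt{2}$ coming from the non-isometric identification of $\Lambda^1_{S^3}$ with $\Lambda^+_{\mathbb{R}\times S^3}$, expand in an $L^\alpha$-eigenbasis using $|\lambda|\geq|\alpha|$ from Lemma \ref{lemma: eigenvalue of L^alpha}, and obtain the $L^{2,\alpha}_1$ bound from translation-invariant elliptic estimates on unit cylinders summed over $n$. The integration by parts you phrase as skew-adjointness of $\partial_t$ is exactly the paper's observation that $\lambda(c_\lambda^2)'$ integrates to zero for compactly supported $c_\lambda$.
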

\begin{proof}
We can suppose that $a$ is smooth and compact supported.
Set $b := e^{\alpha t}a = b_0 dt + \beta$ where 
$(b_0,\beta)\in \Gamma (\mathbb{R}\times S^3, \Lambda^0_{S^3}\oplus \Lambda^1_{S^3})$.
Let $\{\varphi_\lambda\}_\lambda$ be a complete orthonormal basis of $L^2(S^3, \Lambda_{S^3}^0\oplus \Lambda_{S^3}^1)$
consisting of eigen-functions of $L^\alpha$ over $S^3$ with $L^\alpha \varphi_\lambda = \lambda \varphi_\lambda$
where $\lambda$ runs over all eigenvalues of $L^\alpha$.
From Lemma \ref{lemma: eigenvalue of L^alpha}, we have $|\lambda|\geq |\alpha|$.
Decompose $(b_0,\beta)$ by $\{\varphi_\lambda\}$ as 
\[ (b_0(t, \theta),\beta(t,\theta)) = \sum_\lambda c_\lambda(t) \varphi_\lambda(\theta).\]
Since $a$ is compact supported, the functions $c_\lambda$ are also compact supported.
$(\partial/\partial t + L^\alpha)(b_0,\beta) = \sum_{\lambda} (c_\lambda'(t) + \lambda c_\lambda(t))\varphi_\lambda$.
If $(\partial/\partial t + L^\alpha)(b_0,\beta) = (b_1,\gamma)$, then 
$e^{\alpha t}D^\alpha (e^{-\alpha t}b) = 
(b_1, \frac{1}{2}(dt\wedge \gamma + *_3 \gamma))$.
Hence $|e^{\alpha t}D^\alpha (e^{-\alpha t}b)| = \sqrt{|b_1|^2 + |\gamma|^2/2} \geq 
|(\partial/\partial t + L^\alpha)(b_0,\beta)|/\sqrt{2}$.
Therefore
\[ \int_{\mathbb{R}\times S^3}|e^{\alpha t}D^\alpha (e^{-\alpha t}b)|^2 d\vol 
   \geq  \frac{1}{2}\sum_\lambda \int_{-\infty}^\infty |c_\lambda' + \lambda c_\lambda|^2dt .\] 
$|c_\lambda' + \lambda c_\lambda|^2 = |c_\lambda'|^2 + \lambda (c_\lambda^2)' + \lambda^2 c_\lambda^2$.
Since $|\lambda|\geq |\alpha|$ and the functions $c_\lambda$ are compact supported,
\[ \sum_\lambda \int_{-\infty}^\infty |c_\lambda' + \lambda c_\lambda|^2dt\geq 
\alpha^2 \sum_\lambda \int_{-\infty}^\infty |c_\lambda|^2dt = \alpha^2 \norm{b}^2_{L^2}.\]
Then 
\[ \norm{D^\alpha a}_{L^{2,\alpha}} = \norm{e^{\alpha t}D^\alpha (e^{-\alpha t}b)}_{L^2} 
\geq \frac{|\alpha|}{\sqrt{2}}\norm{b}_{L^2}
 = \frac{|\alpha|}{\sqrt{2}}\norm{a}_{L^{2,\alpha}}.\]

Since $e^{\alpha t}D^\alpha e^{-\alpha t} = \frac{\partial}{\partial t} + L^\alpha$ is a translation invariant 
elliptic differential operator, for every $n\in \mathbb{Z}$ we have 
\[ \norm{b}^2_{L^2_1((n,n+1)\times S^3)} \leq 
   \const_\alpha \left(\norm{b}^2_{L^2((n-1,n+2)\times S^3)} 
 + \norm{e^{\alpha t}D^\alpha(e^{-\alpha t} b)}^2_{L^2((n-1,n+2)\times S^3)}\right).\]
Here $\const_\alpha$ is independent of $n$.
By summing up this estimate over $n\in \mathbb{Z}$, we get 
\[ \norm{b}_{L^2_1(\mathbb{R}\times S^3)} \leq 
   \const_\alpha \left(\norm{b}_{L^2(\mathbb{R}\times S^3)}
   +\norm{e^{\alpha t}D^\alpha(e^{-\alpha t}b)}_{L^2(\mathbb{R}\times S^3)}\right).\]
This shows $\norm{a}_{L_1^{2,\alpha}}\leq \const_\alpha (\norm{a}_{L^{2,\alpha}}+\norm{D^\alpha a}_{L^{2,\alpha}})
 \leq \const'_\alpha \norm{D^\alpha a}_{L^{2,\alpha}}$. 
\end{proof}
\begin{lemma} \label{lemma: pointwise estimate over the negative half tube}
(i) 
Suppose $\alpha>0$.
Let $a$ be a smooth $1$-form over the negative half tube $(-\infty, 0)\times S^3$ satisfying
$\int_{(-\infty,0)\times S^3}e^{2\alpha t} |a|^2 d\vol < +\infty$.
Suppose $D^\alpha a= 0$. Then 
\[ |a|, |\nabla a|\leq \const_{a,\alpha} e^{(1-\alpha)t} \quad (t<-2).\]

\noindent 
(ii) Suppose $\alpha<0$. Let $a$ be a smooth $1$-form over the positive half tube $(0,+\infty)\times S^3$ satisfying
$\int_{(0,+\infty)\times S^3}e^{2\alpha t}|a|^2 d\vol < +\infty$, and suppose $D^\alpha a =0$. Then 
\[ |a|, |\nabla a|\leq \const_{a,\alpha}e^{-(1+\alpha)t} \quad (t>2).\]
\end{lemma}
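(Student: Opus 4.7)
The plan is to conjugate away the exponential weight and diagonalize along the $S^3$-factor, reducing the PDE to a family of decoupled first-order ODEs. I set $b := e^{\alpha t}a$ and write $b = b_0\,dt + \beta$ with $(b_0,\beta)\in \Omega^0_{S^3}\oplus \Omega^1_{S^3}$. Identity (\ref{eq: variant of D^alpha}) then converts $D^\alpha a=0$ into
\[ \left(\frac{\partial}{\partial t} + L^\alpha\right)\begin{pmatrix} b_0 \\ \beta \end{pmatrix} = 0, \]
while the integrability hypothesis becomes $\int_{(-\infty,0)\times S^3} |b|^2\,d\vol < \infty$.

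Since $L^\alpha$ is an essentially self-adjoint elliptic operator on the compact manifold $S^3$, I fix a complete orthonormal eigenbasis $\{\varphi_\lambda\}$ of $L^2(S^3,\Lambda^0_{S^3}\oplus \Lambda^1_{S^3})$ with $L^\alpha\varphi_\lambda=\lambda\varphi_\lambda$ and expand $(b_0,\beta)(t,\theta)=\sum_\lambda c_\lambda(t)\varphi_\lambda(\theta)$. The equation forces $c_\lambda'+\lambda c_\lambda=0$, so $c_\lambda(t) = c_\lambda(0)e^{-\lambda t}$. The condition $\int_{-\infty}^0 |c_\lambda(t)|^2\,dt<\infty$ compels $c_\lambda(0)=0$ whenever $\lambda\geq 0$. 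As $\alpha>0$, each surviving mode satisfies $\lambda<0$ and a fortiori $\lambda\neq\alpha$, so Lemma \ref{lemma: eigenvalue of L^alpha} upgrades this to $\lambda<-1$. Parseval's identity then yields
\[ \int_{(-\infty,s)\times S^3} |b|^2\,d\vol = \sum_{\lambda<-1}\frac{|c_\lambda(0)|^2}{-2\lambda}\,e^{-2\lambda s} \leq \const_{a}\cdot e^{2s} \qquad (s<0), \]
using $-2\lambda>2$ together with $s<0$ to bound $e^{-2\lambda s}\leq e^{2s}$.

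The last step converts this integral decay into the pointwise bound via standard interior elliptic regularity for the translation-invariant elliptic operator $\partial_t + L^\alpha$ (the bundles $\Lambda^1_{\mathbb{R}\times S^3}$ and $\Lambda^0\oplus \Lambda^+$ both have rank $4$, so it is genuinely elliptic). For every $t_0$,
\[ \norm{b}_{\mathcal{C}^1([t_0-1/2,\,t_0+1/2]\times S^3)} \leq \const\cdot \norm{b}_{L^2([t_0-1,\,t_0+1]\times S^3)}, \]
with constant independent of $t_0$ by translation invariance. Applied with $t_0=t$ and combined with the preceding $L^2$ decay (at $s=t+1<0$), this gives $|b|,|\nabla b|\leq \const_{a,\alpha}\cdot e^t$ for $t<-2$. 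Unwinding the weight through $a=e^{-\alpha t}b$ (and $\nabla a = e^{-\alpha t}(\nabla b - \alpha b\otimes dt)$) produces the claimed estimate $|a|,|\nabla a|\leq \const_{a,\alpha}\cdot e^{(1-\alpha)t}$.

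Part (ii) is the mirror argument on $(0,+\infty)\times S^3$: with $\alpha<0$, $L^2$-integrability in $t\to+\infty$ now selects modes with $\lambda>0$, and Lemma \ref{lemma: eigenvalue of L^alpha} (again using $\lambda\neq\alpha$) forces $\lambda>1$; the same scheme gives $|b|,|\nabla b|\leq \const\cdot e^{-t}$ for $t>2$, hence $|a|,|\nabla a|\leq \const\cdot e^{-(1+\alpha)t}$. The conceptual heart of the argument, and the only place where anything nontrivial is at stake, is the spectral-gap input from Lemma \ref{lemma: eigenvalue of L^alpha}: without the improvement from $|\lambda|\geq |\alpha|$ to $|\lambda|>1$ across $0$, one would only recover the exponent $|\alpha|$ inherited from the weight and would lose the crucial \emph{excess} decay $1-|\alpha|$ that makes the proposition useful for the later Fredholm theory.
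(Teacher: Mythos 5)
Your argument is correct and is essentially the paper's own proof: conjugate by $e^{\alpha t}$, expand in the eigenbasis of $L^\alpha$, use finiteness of the weighted $L^2$-norm to kill the modes with $\lambda\geq 0$, invoke the spectral gap of Lemma \ref{lemma: eigenvalue of L^alpha} to get $\lambda<-1$ on the surviving modes, and finish with translation-invariant interior elliptic estimates before unwinding the weight. The only cosmetic difference is that you bound the integral over the half-tube $(-\infty,s)\times S^3$ where the paper bounds the slice integrals $\int_{\{t\}\times S^3}|b|^2$ and then integrates over $(t-1,t+1)$; both feed the same elliptic regularity step.
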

\begin{proof}
We give the proof of the case (i) ($\alpha >0$).
The case (ii) can be proved in the same way.
Set $b := e^{\alpha t}a = b_0 dt + \beta$ where 
$(b_0,\beta)\in \Gamma(\mathbb{R}\times S^3, \Lambda^0_{S^3}\oplus \Lambda^1_{S^3})$.
Then $e^{\alpha t} D^{\alpha}(e^{-\alpha t}b) =0$.
Choose $\{\varphi_\lambda\}_\lambda$ as in the proof of Lemma \ref{lemma: L^2,alpha estimate}.
Decompose $(b_0,\beta)$ by $\{\varphi_\lambda\}$ as 
$(b_0(t, \theta),\beta(t,\theta)) = \sum_\lambda c_\lambda(t) \varphi_\lambda(\theta)$.
Since $(\partial/\partial t + L^\alpha)(b_0,\beta) = \sum (c_\lambda'(t) + \lambda c_\lambda(t))\varphi_\lambda =0$,
we have $c_\lambda(t) = d_\lambda e^{-\lambda t}$ where $d_\lambda$ is a constant.
For $t<0$,
\[ \int_{\{t\}\times S^3} |b|^2d\vol_3 = \sum_\lambda |c_\lambda|^2 = \sum_{\lambda}|d_\lambda|^2 e^{-2\lambda t}
   \geq |d_\lambda|^2 e^{-2\lambda t} .\]
Since the $L^2$-norm of $b$ over $(-\infty, 0)\times S^3$ is finite, we have $d_\lambda=0$ for $\lambda \geq 0$.
Set 
\[ B := e^2\int_{\{-1\}\times S^3}|b|^2 d\vol_3 = e^2\sum_{\lambda<0} |d_\lambda e^\lambda|^2 <\infty.\]
From Lemma \ref{lemma: eigenvalue of L^alpha}, negative eigenvalues $\lambda$ satisfy $\lambda < -1$.
Hence for $t<-1$
\[ \int_{\{t\}\times S^3}|b|^2 d\vol_3 = \sum_{\lambda <0} |d_\lambda e^\lambda|^2 e^{-2\lambda(t+1)}
   \leq \sum_{\lambda<0} |d_\lambda e^\lambda|^2 e^{2(t+1)} = B e^{2t}.\]
Then for $t<-2$,
\[ \int_{(t-1,t+1)\times S^3}|b|^2 d\vol \leq B \int_{t-1}^{t+1}e^{2s}ds \leq B e^{2(t+1)} .\]
Since $e^{\alpha t} D^{\alpha}(e^{-\alpha t}b)=0$ (and this is a translation invariant equation), 
the elliptic regularity implies 
\[ |b|, |\nabla b|\leq \const_{\alpha}\sqrt{B}\cdot e^{t} \quad (t<-2).\]
(Indeed we can choose $\const_{\alpha}$ independent of $\alpha$. But it is unimportant for us.)
Since $a=e^{-\alpha t}b$, we have 
\[ |a|, |\nabla a|\leq \const'_{a,\alpha} e^{(1-\alpha)t} \quad (t<-2).\]
\end{proof}

\subsection{Preliminary results over $\hat{Y}$} \label{subsection: preliminary results over Y^hat}
Recall that $Y$ is a simply connected closed oriented $4$-manifold and that 
$\hat{Y}=Y\setminus \{x_1,x_2\}$.
$\hat{Y}$ has cylinderical ends, and
we have $p:\hat{Y}\to \mathbb{R}$.
For a section of $u$ of $\Lambda^i$ $(i\geq 0)$ over $\hat{Y}$, we define the Sobolev norm $\norm{u}_{L^2_k}$ $(k\geq 0)$
as in (\ref{eq: sobolev norm}). 
We define the weighted Sobolev norm by $\norm{u}_{L^{2,\alpha}_k} := \norm{e^{\alpha t} u}_{L^{2,\alpha}}$
where $t=p(x)$ $(x\in \hat{Y})$.
Recall $0<|\alpha|<1$.

For a $1$-form $a$ over $\hat{Y}$ we set 
$D^\alpha a := -d^{*,\alpha} a + d^+ a = -e^{-2\alpha t}d^*(e^{2\alpha t}a) + d^+a$.
\begin{lemma}\label{lemma: the first cohomology is zero}
Let $a$ be a $1$-form over $\hat{Y}$ with $\norm{a}_{L^{2,\alpha}_1} <\infty$.
If $D^\alpha a=0$, then $a=0$.
\end{lemma}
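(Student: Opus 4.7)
The plan is to decompose the equation $D^\alpha a=0$ into the pair $d^+a=0$ and $d^{*,\alpha}a=0$ (elliptic regularity makes $a$ smooth), and to proceed in three stages: establish exponential decay of $a$ on both cylindrical ends of $\hat Y$; use $d^+a=0$ together with Stokes' theorem to show that $a$ is closed, and hence exact since $H^1(\hat Y;\mathbb{R})=0$; and finally exploit the weighted Coulomb condition through an integration by parts that forces the resulting primitive $f$ to be constant.

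For the decay, set $\bar a:=e^{\alpha p}a\in L^2_1$. On each cylindrical end the calculation leading to (\ref{eq: variant of D^alpha}) turns $D^\alpha a=0$ into the translation invariant equation $(\partial_t+L^\alpha)\bar a=0$. Expanding $\bar a$ in an eigenbasis of $L^\alpha$ over $S^3$, the $L^2$ hypothesis forces only modes with $\lambda>0$ on the positive end and only modes with $\lambda<0$ on the negative end. By Lemma~\ref{lemma: eigenvalue of L^alpha} each such $\lambda$ satisfies $|\lambda|\geq|\alpha|$, and all except possibly $\lambda=\alpha$ satisfy $|\lambda|>1$. Arguing as in Lemma~\ref{lemma: pointwise estimate over the negative half tube}, one obtains exponential decay of $a$ and $\nabla a$ on both ends, with rate at least $2|\alpha|$ on the end where the mode $\lambda=\alpha$ is admissible and rate at least $1-|\alpha|$ on the other end. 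Since $d^+a=0$ gives $da\wedge da=-|da|^2\,d\vol$ pointwise, Stokes' theorem on $\hat Y_R:=p^{-1}[-R,R]$ yields $\int_{\hat Y_R}|da|^2\,d\vol=-\int_{\partial\hat Y_R}a\wedge da$, and the right hand side vanishes as $R\to\infty$ by the decay. Hence $da=0$.

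Since $Y$ is a simply connected closed $4$-manifold, a Mayer--Vietoris argument gives $H^1(\hat Y;\mathbb{R})=0$, so $a=df$ for a smooth function $f$, and $d^{*,\alpha}a=0$ becomes $d^*(e^{2\alpha p}df)=0$. Because $df$ decays exponentially, $f$ tends exponentially to finite limits $f_\pm$ at the two ends. Suppose $\alpha>0$ (the case $\alpha<0$ is symmetric); replace $f$ by $f-f_+$, so that $f_+=0$. Integration by parts on $\hat Y_R$ gives
\[
\int_{\hat Y_R}e^{2\alpha p}|df|^2\,d\vol \;=\; \int_{\partial\hat Y_R}f\cdot e^{2\alpha p}\,\partial_{\nu}f\,dA ,
\]
since the bulk term contains $d^*(e^{2\alpha p}df)=0$. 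On the positive end $|f|,|\partial_t f|=O(e^{-2\alpha R})$, so the integrand at $t=R$ is $O(e^{2\alpha R}\cdot e^{-4\alpha R})=O(e^{-2\alpha R})\to 0$. On the negative end $|\partial_t f|=O(e^{-(1-\alpha)R})$ and $|f|$ is bounded, so the integrand at $t=-R$ is $O(e^{-(1+\alpha)R})\to 0$. Letting $R\to\infty$ gives $\int_{\hat Y}e^{2\alpha p}|df|^2\,d\vol=0$, so $df=0$ and hence $a=0$.

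The delicate step is the last one: on the ``slow'' end the eigenmode $\lambda=\alpha$ of $\bar a$ produces a component of $\partial_\nu f$ that decays only at rate $2|\alpha|$, so after multiplying by the weight $e^{2\alpha R}$ the boundary flux would not vanish on its own. The crucial observation is that the asymptotic value $f_+$ of $f$ at the slow end absorbs precisely this slow mode, so subtracting $f_+$ kills the dangerous factor in the boundary integrand; the faster decay $|\lambda|>1$ on the opposite end then makes the other boundary contribution negligible without any normalization.
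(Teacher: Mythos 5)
Your proof is correct and follows essentially the same three-step route as the paper: show $da=0$ by Stokes' theorem using $d^+a=0$, write $a=df$ using $H^1(\hat Y;\mathbb{R})=0$, and kill $\int e^{2\alpha p}|df|^2$ by a weighted integration by parts after normalizing $f(+\infty)=0$ (for $\alpha>0$). The only difference is technical: you run the eigenmode expansion on the positive end as well and obtain the sharper decay $|a|\lesssim e^{-2\alpha t}$ there, which makes the boundary flux vanish directly, whereas the paper uses only $|a|,|f|\lesssim e^{-\alpha t}$ on that end and compensates with a cut-off function and a Cauchy--Schwarz estimate gaining a factor $R^{-1/2}$.
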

\begin{proof}
We give the proof of the case $\alpha>0$.
The case $\alpha<0$ can be proved in the same way.
We divide the proof into three steps.

\textbf{Step 1}: We will show that the above assumption implies $da=0$.
First we want to show $a, da \in L^2$.
We have 
\[ \int_{t>0}|a|^2 d\vol \leq \int_{t>0}e^{2\alpha t}|a|^2 d\vol <\infty, \quad
   \int_{t>0}|da|^2 d\vol \leq \int_{t>0}e^{2\alpha t}|da|^2 d\vol <\infty.\] 
Lemma \ref{lemma: pointwise estimate over the negative half tube} implies that
the $L^2$-norms of $a$ and $da$ over $Y_- = (-\infty, -1)\times S^3$ are finite.
Hence $a, da\in L^2$.
For $R>1$, let $\beta_R$ be a smooth function over $\hat{Y}$ such that 
$\beta_R=1$ over $p^{-1}(-R,R)$, $\beta_R =0$ over $p^{-1}(-\infty,-2R)\cup p^{-1}(2R,\infty)$
and $|d\beta_R|\leq 2/R$.
\[ 0= \int d(\beta_Ra\wedge da) = \int \beta_Rda\wedge da + \int d\beta_R \wedge a\wedge da.\]
Since $d^+a =0$, we have $da\wedge da = -|da|^2d\vol$ and hence 
\[ \int \beta_R|da|^2d\vol = \int d\beta_R\wedge a\wedge da \leq \frac{2}{R}\norm{a}_{L^2}\norm{da}_{L^2}.\]
Let $R\to +\infty$. Then $\int |da|^2 d\vol =0$.
Hence $da =0$.

\textbf{Step 2}:
We have
\[ |a|\leq \const_{a, \alpha}e^{-\alpha t}\> (t>1), \quad |a|\leq \const_{a,\alpha} e^{(1-\alpha)t} \>(t<-1).\]
The latter estimate comes from Lemma \ref{lemma: pointwise estimate over the negative half tube}. 
The former one comes from the elliptic regularity and the following estimate:
For $t>1$,
\[ \int_{p^{-1}(t,+\infty)}|a|^2 d\vol \leq e^{-2\alpha t}\int_{p^{-1}(t,+\infty)}e^{2\alpha p(x)}|a(x)|^2 d\vol(x) 
   \leq \norm{a}^2_{L^{2,\alpha}}e^{-2\alpha t}.\]

\textbf{Step 3}: From Step 1 and $H^1_{dR}(\hat{Y}) =0$, there is a smooth function $f$ on $\hat{Y}$ satisfying $a=df$.
From Step 2, the limits $f(+\infty) := \lim_{t\to +\infty} f(t, \theta)$ and 
$f(-\infty) := \lim_{t\to -\infty}f(t,\theta)$ exist and independent of $\theta\in S^3$.
In particular $f$ is bounded. We can assume $f(+\infty) =0$. Then for $t>1$
\[ f(t, \theta) = -\int_{t}^\infty \frac{\partial f}{\partial s}(s, \theta) ds.\]
Since $|\partial f/\partial s|\leq |a|\leq \const_{a,\alpha}e^{-\alpha t}$ for $t>1$ (Step 2), 
\begin{equation} \label{eq: decay of f over the positive end}
 |f|\leq \const_{a, \alpha}\cdot e^{-\alpha t} \quad (t>1).
\end{equation}
Let $\beta_R$ be the cut-off function used in Step 1.
Since $e^{-2\alpha t}d^*(e^{2\alpha t}a) = d^{*,\alpha}a =0$,
\begin{equation*}
 \begin{split}
  0 = \int e^{2\alpha t}\langle \beta_R f, d^{*,\alpha}a\rangle d\vol&= 
   \int e^{2\alpha t}\langle d(\beta_R f), a\rangle  d\vol \\
   &= \int e^{2\alpha t}f\langle d\beta_R, a\rangle d\vol + \int e^{2\alpha t}\beta_R |a|^2 d\vol.
 \end{split}
\end{equation*}
Hence 
\begin{equation} \label{eq: weighted norm of beta_T |a|^2}
   \int e^{2\alpha t}\beta_R |a|^2 d\vol \leq  
   \frac{2}{R}\int_{\supp (d\beta_R)} e^{2\alpha t}|f||a|d\vol.
\end{equation}
We have $\supp (d\beta_R)\subset p^{-1}(-2R,-R)\cup p^{-1}(R,2R)$. 
Since $|f|$ and $|a|$ are bounded,
\[ \int_{p^{-1}(-2R,-R)} e^{2\alpha t}|f||a|d\vol \to 0 \quad (R\to +\infty).\]
On the other hand, by the above (\ref{eq: decay of f over the positive end})
\begin{equation*}
 \begin{split}
 &\frac{2}{R}\int_{p^{-1}(R,2R)} e^{2\alpha t}|f||a|d\vol \leq \frac{\const_{a,\alpha}}{R}\int_{p^{-1}(R,2R)}e^{\alpha t}|a|d\vol \\
 &\leq \frac{\const_{a,\alpha}}{R}\sqrt{\vol((R, 2R)\times S^3)}\sqrt{\int_{p^{-1}(R,2R)}e^{2\alpha t}|a|^2d\vol}
 \leq \const_{a,\alpha} \norm{a}_{L^{2,\alpha}}/\sqrt{R}.
 \end{split}
\end{equation*}
This goes to $0$ as $R\to +\infty$.
From (\ref{eq: weighted norm of beta_T |a|^2}),
\[ \int e^{2\alpha t}|a|^2 =0.\]
Thus $a=0$.
\end{proof}
\begin{lemma} \label{lemma: quantitative version of the first cohomology is zero}
For $a\in L^{2,\alpha}_1(\hat{Y}, \Lambda^1)$, 
\[ \norm{a}_{L^{2,\alpha}_1(\hat{Y})}\leq \const_\alpha \norm{D^\alpha a}_{L^{2,\alpha}(\hat{Y})}.\]
\end{lemma}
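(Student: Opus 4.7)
The plan is to prove the inequality by contradiction, combining the vanishing result of Lemma~\ref{lemma: the first cohomology is zero} with the uniform tube estimate of Lemma~\ref{lemma: L^2,alpha estimate}. First I would establish a weaker form of the desired estimate, namely
\[
\norm{a}_{L^{2,\alpha}_1(\hat{Y})} \leq \const_\alpha \bigl(\norm{D^\alpha a}_{L^{2,\alpha}(\hat{Y})} + \norm{a}_{L^{2,\alpha}(\hat{Y})}\bigr).
\]
The operator $D^\alpha = -d^{*,\alpha}+d^+$ is elliptic, and on each cylindrical end it coincides with the translation-invariant operator studied in Section~\ref{subsection: preliminary estmates over the tube}. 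Covering $\hat{Y}$ by finitely many balls in a compact core together with unit translates $(n,n+1)\times S^3$ on the two ends, one obtains uniform local $L^2_1$ elliptic estimates (exactly as in the second half of the proof of Lemma~\ref{lemma: L^2,alpha estimate}) which sum to the displayed inequality.

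To remove the $\norm{a}_{L^{2,\alpha}}$ term I argue by contradiction: suppose there exist $a_n\in L^{2,\alpha}_1(\hat{Y},\Lambda^1)$ with $\norm{a_n}_{L^{2,\alpha}_1}=1$ and $\norm{D^\alpha a_n}_{L^{2,\alpha}}\to 0$. Passing to a subsequence, $a_n\rightharpoonup a_\infty$ weakly in $L^{2,\alpha}_1$, and by Rellich the convergence is strong in $L^{2,\alpha}(K)$ for every compact $K\subset\hat{Y}$. The weak limit $a_\infty$ lies in $L^{2,\alpha}_1$ and satisfies $D^\alpha a_\infty=0$, so Lemma~\ref{lemma: the first cohomology is zero} forces $a_\infty=0$; hence $\norm{a_n}_{L^{2,\alpha}(K)}\to 0$ for every compact $K\subset\hat{Y}$.

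The crucial step is to prevent mass from escaping along the ends. Let $\beta_R$ be a smooth cut-off on $\hat{Y}$ equal to $1$ where $|p|\geq R+1$, equal to $0$ where $|p|\leq R$, with $|d\beta_R|\leq 2$. Then $\beta_R a_n$ is supported in the two half-tubes, so extending by zero yields $1$-forms on $\mathbb{R}\times S^3$ to which Lemma~\ref{lemma: L^2,alpha estimate} applies, giving
\[
\norm{\beta_R a_n}_{L^{2,\alpha}} \leq \const_\alpha \norm{D^\alpha(\beta_R a_n)}_{L^{2,\alpha}} \leq \const_\alpha\bigl(\norm{D^\alpha a_n}_{L^{2,\alpha}} + \norm{a_n}_{L^{2,\alpha}(\{R<|p|<R+1\})}\bigr),
\]
since $[D^\alpha,\beta_R]$ is a zero-order operator with symbol bounded by $|d\beta_R|\leq 2$ and supported in the compact annulus $\{R<|p|<R+1\}$. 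For fixed $R$, both right-hand terms tend to $0$ as $n\to\infty$, so $\norm{a_n}_{L^{2,\alpha}(\{|p|>R+1\})}\to 0$; combined with the compact-set convergence above this gives $\norm{a_n}_{L^{2,\alpha}(\hat{Y})}\to 0$, and the weak elliptic estimate then forces $\norm{a_n}_{L^{2,\alpha}_1}\to 0$, contradicting $\norm{a_n}_{L^{2,\alpha}_1}=1$.

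The main obstacle is the non-compactness of $\hat{Y}$, which means Rellich alone provides only local strong convergence; one must invoke the tube estimate of Lemma~\ref{lemma: L^2,alpha estimate} (whose constant is uniform in the length of the cylinder) to rule out mass concentrating near infinity. This is precisely where the spectral gap of Lemma~\ref{lemma: eigenvalue of L^alpha}, and hence the hypothesis $0<|\alpha|<1$, enters quantitatively in the argument.
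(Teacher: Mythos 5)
Your proof is correct and takes essentially the same route as the paper's: the same two inputs (the uniform tube estimate of Lemma~\ref{lemma: L^2,alpha estimate} applied to cut-offs supported on the ends, and the vanishing result of Lemma~\ref{lemma: the first cohomology is zero}) combined with Rellich compactness in a contradiction argument. The only difference is organizational: the paper first sharpens the a priori estimate to $\norm{a}_{L^{2,\alpha}_1(\hat{Y})}\leq \const_\alpha(\norm{a}_{L^2(U)}+\norm{D^\alpha a}_{L^{2,\alpha}(\hat{Y})})$ with $U=p^{-1}(-2,2)$ precompact, so that no separate argument against escape of mass to infinity is needed in the compactness step, whereas you perform that localization inside the contradiction argument.
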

\begin{proof}
Set $U:=p^{-1}(-2,2)\subset \hat{Y}$.
By using Lemma \ref{lemma: L^2,alpha estimate}, for all $a\in L^{2,\alpha}_1(\hat{Y})$
\begin{equation} \label{eq: inner and outer estimate}
 \norm{a}_{L_1^{2,\alpha}(\hat{Y})} \leq 
   \const_\alpha (\norm{a}_{L^{2}(U)} + \norm{D^\alpha a}_{L^{2,\alpha}(\hat{Y})}).
\end{equation}
We want to show $\norm{a}_{L^{2}(U)}\leq \const_\alpha \norm{D^\alpha a}_{L^{2,\alpha}(\hat{Y})}$.
Suppose on the contrary there exist a sequence $a_n$ $(n\geq 1)$ in $L^{2,\alpha}_1(\hat{Y}, \Lambda^1)$ such that 
\[ 1= \norm{a_n}_{L^{2}(U)} > 
    n\norm{D^\alpha a_n}_{L^{2,\alpha}(\hat{Y})}.\]
From the above (\ref{eq: inner and outer estimate}), $\{a_n\}$ is bounded in $L^{2,\alpha}_1(\hat{Y})$.
Hence, if we take a subsequence (also denoted by $a_n$), 
the sequence $a_n$ weakly converges to some $a$ in $L^{2,\alpha}_1(\hat{Y})$.
We have $D^\alpha a=0$.
Hence Lemma \ref{lemma: the first cohomology is zero} implies $a=0$.
By Rellich's lemma, $a_n$ strongly converges to $0$ in $L^{2}(U)$.
(Note that $U$ is pre-compact.)
This contradicts $\norm{a_n}_{L^{2}(U)}=1$.
\end{proof}

\subsection{Preliminary results over $X=Y^{\sharp \mathbb{Z}}$} \label{subsection: preliminary results over X=Y^sharp}
Recall that $X=Y^{\sharp \mathbb{Z}}$ has the periodic metric $g_0$ 
which is compatible with the given metric $h$ over every $Y^{(n)}$
($n\in \mathbb{Z}$), and that $g_0$ depends on the parameter $T>2$.
We define the Sobolev norm $\norm{\cdot}_{L^2_k}$ over $X$ as in (\ref{eq: sobolev norm}) 
by using the metric $g_0$ and its Levi-Civita connection.
We define the weighted Sobolev norm by $\norm{u}_{L^{2,\alpha}_k} := \norm{e^{\alpha t}u}_{L^{2,\alpha}_k}$ where 
$t=q(x)$ $(x\in X)$.
For a $1$-form $a$ over $X$ we set $D^\alpha a := -d^{*, \alpha} a + d^+ a = -e^{-2\alpha t}d^*(e^{2\alpha t}a) + d^+a$.
\begin{lemma} \label{lemma: quantitative version of the first cohomology of X is zero}
There exists $T_\alpha>2$ such that if $T\geq T_\alpha$ then for any $a\in L^{2,\alpha}_1(X, \Lambda^1)$ we have 
\[ \norm{a}_{L^{2,\alpha}_1(X)} \leq \const_\alpha \norm{D^\alpha a}_{L^{2,\alpha}(X)}.\]
The important point is that $T_\alpha$ depends only on $\alpha$.
\end{lemma}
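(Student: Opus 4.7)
The plan is to deduce this global weighted estimate on $X$ from the single-copy estimate Lemma~\ref{lemma: quantitative version of the first cohomology is zero} by a partition-of-unity argument, with the cut-off error absorbed thanks to the large parameter $T$. Choose a partition of unity $\{\phi_n\}_{n\in\mathbb{Z}}$ on $X$ with $\sum_n \phi_n^2 \equiv 1$ satisfying: (a) $\supp(\phi_n)$ is contained in the image of $Y^{(n)}_T$ inside $X$, so that it can be identified isometrically with a compact subset of $\hat{Y}^{(n)}$; and (b) $|d\phi_n|\leq \const/T$, uniformly in $n$. A concrete choice is to fix once and for all a smooth profile $\eta:[0,1]\to[0,1]$ with $\eta(s)^2+\eta(1-s)^2=1$, $\eta(0)=1$, $\eta(1)=0$, and use it, rescaled by the length $T-2$, to interpolate between consecutive cut-offs across the connecting tubes $q^{-1}(nT+1,(n+1)T-1)$.

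For each $n$, view $\phi_n a$ as a compactly supported $1$-form on $\hat{Y}^{(n)}$ by zero-extension. Under the identification $Y^{(n)}_T\hookrightarrow\hat{Y}^{(n)}$ we have $q=nT+p^{(n)}$, hence $e^{2\alpha q}=e^{2\alpha nT}e^{2\alpha p^{(n)}}$; moreover a short computation shows that $D^\alpha$ on $X$ restricts to $D^\alpha$ on $\hat{Y}^{(n)}$ (the constant $e^{2\alpha nT}$ cancels inside $d^{*,\alpha}$). The weighted norms on $X$ and $\hat{Y}^{(n)}$ differ by the overall factor $e^{\alpha nT}$ on both sides, so Lemma~\ref{lemma: quantitative version of the first cohomology is zero} translates to
\[ \norm{\phi_n a}_{L^{2,\alpha}_1(X)} \leq \const_\alpha \norm{D^\alpha(\phi_n a)}_{L^{2,\alpha}(X)}, \]
with $\const_\alpha$ independent of $n$ and $T$. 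Writing $D^\alpha(\phi_n a)=\phi_n D^\alpha a+R_n(a)$, where $R_n$ is a zeroth-order operator with $|R_n(a)|\leq \const|d\phi_n||a|$, and squaring the preceding inequality gives, after the elementary $(u+v)^2\leq 2u^2+2v^2$,
\[ \norm{\phi_n a}^2_{L^{2,\alpha}_1(X)}\leq 2\const_\alpha^2\bigl(\norm{\phi_n D^\alpha a}^2_{L^{2,\alpha}(X)}+\const\norm{|d\phi_n|a}^2_{L^{2,\alpha}(X)}\bigr). \]

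Now sum over $n$. On the left the identities $\sum_n\phi_n^2=1$ and $\sum_n\phi_n\,d\phi_n=\tfrac{1}{2}d(\sum_n\phi_n^2)=0$ yield $\sum_n|\phi_n a|^2=|a|^2$ and $\sum_n|\nabla(\phi_n a)|^2\geq |\nabla a|^2$, so $\sum_n\norm{\phi_n a}^2_{L^{2,\alpha}_1(X)}\geq \norm{a}^2_{L^{2,\alpha}_1(X)}$. On the right, $\sum_n\norm{\phi_n D^\alpha a}^2_{L^{2,\alpha}(X)}=\norm{D^\alpha a}^2_{L^{2,\alpha}(X)}$, while $|d\phi_n|\leq \const/T$ together with the bounded-overlap property of $\supp(d\phi_n)$ (each point lying in at most two such supports) gives $\sum_n\norm{|d\phi_n|\, a}^2_{L^{2,\alpha}(X)}\leq \const\,T^{-2}\norm{a}^2_{L^{2,\alpha}(X)}\leq \const\,T^{-2}\norm{a}^2_{L^{2,\alpha}_1(X)}$. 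Choosing $T_\alpha$ so large (in terms of $\const_\alpha$, hence of $\alpha$ alone) that $2\const_\alpha^2\cdot\const\, T_\alpha^{-2}\leq 1/2$ allows the last term to be absorbed into the left-hand side, yielding the claim. The real obstacle is the simultaneous satisfaction of (a) and (b) in the partition of unity: the transitions must fit inside the tubes of length $T-2$ so that $|d\phi_n|=O(1/T)$, while $\supp(\phi_n)$ must lie within a single $\hat{Y}^{(n)}$ so that Lemma~\ref{lemma: quantitative version of the first cohomology is zero} applies—both are arranged by the explicit construction above, after which the remaining steps are routine commutator manipulations.
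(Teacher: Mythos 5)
Your proof is correct and follows essentially the same route as the paper's: cut $a$ off to the individual summands $Y^{(n)}_T\subset\hat{Y}^{(n)}$, apply Lemma \ref{lemma: quantitative version of the first cohomology is zero}'s single-copy precursor (Lemma \ref{lemma: quantitative version of the first cohomology is zero} over $\hat{Y}$, i.e.\ Lemma \ref{lemma: quantitative version of the first cohomology is zero}'s predecessor on $\hat Y$) with a constant uniform in $n$ and $T$, and absorb the $O(1/T)$ cut-off commutator for $T$ large. The only cosmetic difference is that you use a squared partition of unity $\sum_n\phi_n^2=1$, whereas the paper uses overlapping cut-offs $\beta^{(n)}$ equal to $1$ on the fundamental domains $q^{-1}((n-1/2)T,(n+1/2)T)$ and bounds $\norm{a}^2_{L^{2,\alpha}_1}$ by $\sum_n\norm{\beta^{(n)}a}^2_{L^{2,\alpha}_1}$ directly; both devices work identically.
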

\begin{proof}
Let $\beta^{(n)}$ be a smooth function on $X$ such that $0\leq \beta^{(n)}\leq 1$, 
$\supp \beta^{(n)}\subset Y^{(n)}_T = q^{-1}((n-1)T+1,(n+1)T-1)$, 
$\beta^{(n)} = 1$ over $q^{-1}((n-1/2)T,(n+1/2)T)$ and $|d\beta^{(n)}|\leq 3/T$.
Since $t=q(x) = p^{(n)}(x) + nT$ over $Y^{(n)}_T$, 
by applying Lemma \ref{lemma: quantitative version of the first cohomology is zero} to $\beta^{(n)} a$, we get 
\begin{equation*}
 \begin{split}
  \norm{\beta^{(n)}a}_{L^{2,\alpha}_1(X)} &= e^{\alpha nT}\norm{e^{\alpha p^{(n)}(x)}\beta^{(n)}a}_{L^2_1(Y^{(n)}_T)} \\
  &\leq \const_\alpha \cdot e^{\alpha nT}\norm{e^{\alpha p^{(n)}(x)}D^\alpha (\beta^{(n)}a)}_{L^2(Y^{(n)}_T)} \\
  &= \const_\alpha \norm{D^\alpha (\beta^{(n)}a)}_{L^{2,\alpha}(X)}\\
    &\leq \frac{\const_\alpha}{T}\norm{a}_{L^{2,\alpha}(Y^{(n)}_T)} 
    + \const_\alpha \norm{D^\alpha a}_{L^{2,\alpha}(Y^{(n)}_T)}.
 \end{split}
\end{equation*}
Then 
\begin{equation*}
 \begin{split}
 \norm{a}_{L^{2,\alpha}_1(X)}^2 &\leq \sum_{n\in \mathbb{Z}} \norm{\beta^{(n)}a}_{L^{2,\alpha}_1(X)}^2 \\
 &\leq \frac{\const_\alpha}{T^2} \sum_{n\in \mathbb{Z}} \norm{a}_{L^{2,\alpha}(Y^{(n)}_T)}^2
  + \const_\alpha \sum_{n\in \mathbb{Z}}\norm{D^\alpha a}_{L^{2,\alpha}(Y^{(n)}_T)}^2 \\ 
 &\leq \frac{\const_\alpha}{T^2}\norm{a}_{L^{2,\alpha}(X)}^2  + \const_\alpha \norm{D^\alpha a}_{L^{2,\alpha}(X)}^2.
 \end{split}
\end{equation*}
If $T\gg 1$, then 
\[ \norm{a}_{L^{2,\alpha}_1(X)}^2 \leq \const_\alpha \norm{D^\alpha a}_{L^{2,\alpha}(X)}^2.\]
\end{proof}
For a $1$-form $a$ on $X$ we set $\mathcal{D}a:=-d^*a+d^+a$.
Its formal adjoint $\mathcal{D}^*$ is given by 
$\mathcal{D}^*(u,\xi) = -du + d^*\xi= -du-*d\xi$ for $(u,\xi)\in \Omega^0\oplus\Omega^+$.
We consider $\mathcal{D}$ as an unbounded operator from $L^2(X,\Lambda^1)$ to $L^2(X,\Lambda^0\oplus\Lambda^+)$.

The additive Lie group $\mathbb{Z}$ naturally acts on $X=Y^{\sharp\mathbb{Z}}$.
Set $Y^+ :=X/\mathbb{Z}$.
We have $b_1(Y^+) =1$ and $b_+(Y^+)=b_+(Y)$.
The operator $\mathcal{D}$ is preserved by the $\mathbb{Z}$-action, and its quotient is equal to the operator 
$-d^*+d^+ :\Omega^1_{Y^+}\to \Omega^0_{Y^+}\oplus\Omega^+_{Y^+}$ on $Y^+$.
Then we can apply Atiyah's $\Gamma$-index theorem (Atiyah \cite{Atiyah}, Roe \cite[Chapter 13]{Roe}) to $\mathcal{D}$ and get 
\[ \mathrm{ind}_\mathbb{Z} \mathcal{D} = \mathrm{ind} (-d^*+d^+ :\Omega^1_{Y^+}\to \Omega^0_{Y^+}\oplus\Omega^+_{Y^+})
   = -1+b_1(Y^+)-b_+(Y^+) = -b_+(Y).\]
Here $\mathrm{ind}_\mathbb{Z}\mathcal{D}$ is the $\Gamma$-index of $\mathcal{D}$ $(\Gamma =\mathbb{Z})$.

The above implies that if $b_+(Y)\geq 1$ then $\mathrm{Ker}\mathcal{D}^*\subset L^2(X,\Lambda^0\oplus\Lambda^+)$
is infinite dimensional.
Suppose $\rho=(u,\xi)\in L^2(X,\Lambda^0\oplus\Lambda^+)$ satisfies $\mathcal{D}^*\rho=-du+d^*\xi=0$ as a distribution.
By the elliptic regularity, $\rho$ is smooth, and
for each $n\in \mathbb{Z}$
\[ \norm{\rho}_{L^2_1(q^{-1}(-(n-1/2)T,(n+1/2)T))}\leq \const_T \norm{\rho}_{L^2(Y^{(n)}_T)}.\]
Here $\const_T$ is independent of $n\in \mathbb{Z}$.
Hence $\norm{\rho}_{L^2_1(X)}\leq \const_T \norm{\rho}_{L^2(X)} <+\infty$, and
 $\rho\in L^2_1(X)$.
In particular $u, \xi\in L^2_1(X)$ and hence $\langle du,d^*\xi\rangle_{L^2}=0$.
Then 
\[ 0=\langle \mathcal{D}^*\rho,du\rangle_{L^2} = -\norm{du}_{L^2}.\]
So $du=0$.
This means that $u$ is constant. But $u\in L^2$. Hence $u=0$.
Therefore $d^*\xi=0$.
Thus we get the following result.
\begin{lemma} \label{lemma: the space of harmonic self-dual forms is infinite dimensional}
Suppose $b_+(Y)\geq 1$.
The space of $\xi\in L^2_1(X,\Lambda^+)$ satisfying $d^*\xi=0$ is infinite dimensional.
\end{lemma}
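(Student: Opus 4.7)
The plan is to exploit the $\Gamma$-index computation and then eliminate the $\Omega^0$ component of any element in $\ker \mathcal{D}^*$ by an integration-by-parts argument, exactly along the lines outlined in the paragraph preceding the statement.

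First I would set up the operator $\mathcal{D} = -d^* + d^+ : L^2(X,\Lambda^1) \to L^2(X,\Lambda^0 \oplus \Lambda^+)$ as an unbounded operator, and note it is $\mathbb{Z}$-equivariant for the natural $\mathbb{Z}$-action on $X = Y^{\sharp \mathbb{Z}}$. Its quotient on $Y^+ = X/\mathbb{Z}$ is the standard compact operator $-d^* + d^+$, whose ordinary Fredholm index is $-1 + b_1(Y^+) - b_+(Y^+) = -b_+(Y)$ since $b_1(Y^+) = 1$ and $b_+(Y^+) = b_+(Y)$. By Atiyah's $\Gamma$-index theorem, the $\Gamma$-index of $\mathcal{D}$ equals $-b_+(Y) \leq -1$. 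In particular the $\Gamma$-dimension of $\ker \mathcal{D}^*$ is at least $b_+(Y) \geq 1$, which forces $\ker \mathcal{D}^* \subset L^2(X,\Lambda^0 \oplus \Lambda^+)$ to be infinite dimensional (a $\mathbb{Z}$-invariant closed subspace of $L^2$ with positive $\Gamma$-dimension cannot have finite vector-space dimension).

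Next I would upgrade elements of $\ker \mathcal{D}^*$ from merely $L^2$ to $L^2_1$. If $\rho = (u,\xi) \in L^2$ satisfies $\mathcal{D}^* \rho = -du + d^*\xi = 0$ as a distribution, elliptic regularity on each piece $Y^{(n)}_T$ combined with the translation-invariance of $g_0$ gives a uniform (in $n$) interior estimate
\[ \|\rho\|_{L^2_1(q^{-1}((n-1/2)T,(n+1/2)T))} \leq \mathrm{const}_T \, \|\rho\|_{L^2(Y^{(n)}_T)}, \]
and summing over $n$ yields $\rho \in L^2_1(X)$. With this regularity, pairing $\mathcal{D}^*\rho$ against $du$ and integrating by parts (legitimate since $u, du \in L^2$) gives $0 = \langle -du + d^*\xi, du\rangle = -\|du\|_{L^2}^2$ because the cross term $\langle d^*\xi, du\rangle_{L^2}$ vanishes. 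Hence $u$ is locally constant; being $L^2$ on the non-compact $X$ it must be zero, which then forces $d^*\xi = 0$.

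Putting the two steps together: the assignment $(u,\xi) \mapsto \xi$ is then injective on $\ker \mathcal{D}^*$ (with $u$ identically zero) and lands in the space $\{\xi \in L^2_1(X,\Lambda^+) : d^*\xi = 0\}$, so this latter space inherits infinite dimensionality from $\ker \mathcal{D}^*$. The main conceptual obstacle is the invocation of the $\Gamma$-index theorem and the translation from positive $\Gamma$-dimension of $\ker \mathcal{D}^*$ to infinite classical dimension; everything else is routine elliptic regularity plus the standard trick of eliminating the $0$-form component by an $L^2$-pairing. The only other point to double-check is that the interior estimate's constant $\mathrm{const}_T$ is genuinely independent of $n$, which is immediate from the $\mathbb{Z}$-periodicity of the metric $g_0$ on the ends.
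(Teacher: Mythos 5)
Your proposal is correct and follows essentially the same route as the paper: Atiyah's $\Gamma$-index theorem applied to the $\mathbb{Z}$-periodic operator $\mathcal{D}=-d^*+d^+$ to get infinite dimensionality of $\ker\mathcal{D}^*$, uniform elliptic estimates on the periodic pieces to upgrade to $L^2_1$, and the $L^2$-pairing with $du$ to kill the $\Omega^0$ component. No gaps worth flagging.
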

Take and fix a smooth function $|\cdot|':\mathbb{R}\to \mathbb{R}$ satisfying $|t|' = |t|$ for $|t|\geq 1$.
For $0<|\alpha|<1$, set $W(x):=e^{\alpha|q(x)|'}$ for $x\in X$.
Hence $W$ is a positive smooth function on $X$ satisfying $W(x) = e^{\alpha|q(x)|}$ for $|q(x)| \geq 1$.
For a section $\eta$ of $\Lambda^i$ $(i\geq 0)$ we set 
$\norm{\eta}_{L^{2,W}_k(X)} := \norm{W\eta}_{L^2_k(X)}$.
For a self-dual form $\eta$ over $X$, we set 
$d^{*, W}\eta := -W^{-2}*d(W^2\eta)$.
If $a\in \Omega^1_X$ and $\eta\in \Omega^+_X$ have compact supports, then 
$\int_X W^2\langle da, \eta\rangle d\vol = \int_X W^2\langle a, d^{*,W}\eta\rangle d\vol$.
\begin{lemma} \label{lemma: infinite dimensionality of weighted harminic self-dual forms}
Suppose $b_+(Y)\geq 1$ and $\alpha>0$.
Then the space of $\eta\in L^{2,W}_1(X,\Lambda^+)$ satisfying $d^{*,W}\eta=0$ is infinite dimensional.
Moreover it is closed in $L^{2,W}(X, \Lambda^+)$.
\end{lemma}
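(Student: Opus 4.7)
The plan is to reduce the claim to Lemma \ref{lemma: the space of harmonic self-dual forms is infinite dimensional} via the substitution $\xi := W^2\eta$. For self-dual $\eta$, the definition of $d^{*,W}$ gives
\[ d^{*,W}\eta = -W^{-2}{*}d(W^2\eta) = -W^{-2}{*}d\xi, \]
and since $\xi$ is also self-dual we have $d^*\xi = -{*}d\xi$. Hence the three conditions $d^{*,W}\eta=0$, $d\xi=0$, $d^*\xi=0$ are mutually equivalent, making $\xi$ a weak harmonic self-dual $2$-form. Moreover
\[ \norm{\eta}_{L^{2,W}}^2 = \int_X W^2|\eta|^2\,d\vol = \int_X W^{-2}|\xi|^2\,d\vol, \]
so passing to $\xi$ trades the positive weight $W$ for the negative weight $W^{-1}$.

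For infinite-dimensionality: since $W$ is bounded below by a positive constant, $\int W^{-2}|\xi|^2 \leq C\int |\xi|^2$, and any $\xi$ produced by Lemma \ref{lemma: the space of harmonic self-dual forms is infinite dimensional} therefore yields $\eta := W^{-2}\xi \in L^{2,W}(\Lambda^+)$ with $d^{*,W}\eta=0$. The bound $|d\log W|\leq |\alpha|$ gives $|\nabla\eta|\leq C' W^{-2}(|\xi|+|\nabla\xi|)$, hence
\[ \int_X W^2|\nabla\eta|^2\,d\vol \leq C''\int_X W^{-2}\bigl(|\xi|^2+|\nabla\xi|^2\bigr)\,d\vol \leq C'''\norm{\xi}_{L^2_1}^2 < \infty, \]
so $\eta\in L^{2,W}_1$. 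As $\xi\mapsto W^{-2}\xi$ is injective, Lemma \ref{lemma: the space of harmonic self-dual forms is infinite dimensional} furnishes an infinite-dimensional subspace of our target.

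For closedness, observe that for any compactly supported smooth $1$-form $\phi$ on $X$ the functional $\eta \mapsto \int_X W^2\langle d\phi,\eta\rangle\,d\vol$ is $L^{2,W}$-continuous (Cauchy-Schwarz with $\norm{d\phi}_{L^{2,W}}<\infty$). Hence
\[ V := \{\eta\in L^{2,W}(X,\Lambda^+): d^{*,W}\eta = 0 \text{ in the distributional sense}\} \]
is closed in $L^{2,W}$. To identify $V$ with the space in the lemma, given $\eta\in V$ put $\xi:=W^2\eta$; then $\xi$ is a distributional closed self-dual $2$-form, hence weakly harmonic and therefore smooth by elliptic regularity. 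To upgrade to $\eta\in L^{2,W}_1$, cover $X$ by unit balls $B_i$ of bounded multiplicity and apply the standard local estimate $\norm{\nabla\xi}_{L^2(B_i)}\leq C\norm{\xi}_{L^2(2B_i)}$; the slow variation of $W$ on unit balls (from $|d\log W|\leq|\alpha|$) allows the local estimates to be summed into
\[ \int_X W^{-2}|\nabla\xi|^2\,d\vol \leq C_0\int_X W^{-2}|\xi|^2\,d\vol <\infty, \]
and the computation of the preceding paragraph then yields $\eta\in L^{2,W}_1$.

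The main obstacle is precisely this weighted local-to-global elliptic regularity step, i.e.\ promoting membership in the distributional kernel of $d^{*,W}$ on $L^{2,W}(\Lambda^+)$ to $L^{2,W}_1$-regularity. Everything else is bookkeeping around the substitution $\xi = W^2\eta$ and an application of Lemma \ref{lemma: the space of harmonic self-dual forms is infinite dimensional}.
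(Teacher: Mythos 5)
Your proof is correct and takes essentially the same route as the paper: infinite-dimensionality via the substitution $\eta=W^{-2}\xi$ applied to Lemma \ref{lemma: the space of harmonic self-dual forms is infinite dimensional}, and closedness by showing the distributional $L^{2,W}$-kernel of $d^{*,W}$ automatically lies in $L^{2,W}_1$. The only (cosmetic) difference is in the regularity step, where you sum uniform interior estimates for the harmonic form $\xi=W^2\eta$ over unit balls using the slow variation of $W$, while the paper sums estimates for $W\eta$ over the periodic pieces $Y^{(n)}_T$ using that the conjugated operators there are mutually isomorphic.
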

\begin{proof}
Suppose that $\xi\in L^2_1(X,\Lambda^+)$ satisfies $d^*\xi=0$. Set $\eta:=W^{-2}\xi$.
Then $d^{*,W}\eta=0$ and 
$\norm{\eta}_{L^{2,W}_1(X)} = \norm{W^{-1}\xi}_{L^2_1(X)} <\infty$ from $\alpha>0$.
Thus Lemma \ref{lemma: the space of harmonic self-dual forms is infinite dimensional} implies 
the first statement.

In order to prove the closedness of $\mathrm{Ker}(d^{*,W}) \subset L^{2,W}_1(X, \Lambda^+)$ in $L^{2,W}(X, \Lambda^+)$, 
it is enough to show that if $\eta\in L^{2,W}(X,\Lambda^+)$ satisfies $d^{*,W}\eta=0$ (as a distribution) then 
$\eta\in L^{2,W}_1(X,\Lambda^+)$.
$\eta$ is smooth by the (local) elliptic regularity.
The differential operator $d^{*,W}$ on $Y^{(n)}_T$ $(n>0)$ are naturally isomorphic to each other.
The same statement also hold for $n<0$.
Hence, by the elliptic regularity, 
\begin{equation*}
 \begin{split}
 \norm{W\eta}_{L^2_1(q^{-1}((n-1/2)T, (n+1/2)T))} &\leq \const_{T,\alpha}\cdot (\norm{W\eta}_{L^2(Y^{(n)}_T)} 
   + \norm{d^{*,W}(W\eta)}_{L^2(Y^{(n)}_T)})\\
 &\leq \const_{T,\alpha} \norm{W\eta}_{L^2(Y^{(n)}_T)}.
 \end{split}
\end{equation*}
Here $\const_{T,\alpha}$ are independent of $n\in \mathbb{Z}$.
Thus $\norm{\eta}_{L^{2,W}_1(X)}\leq \const_{T,\alpha}\norm{\eta}_{L^{2,W}(X)} < \infty$.
\end{proof}
\begin{lemma} \label{lemma: techniacl lemma for infinite dimensionality of harmonic self-dual forms}
Suppose $b_+(Y)\geq 1$ and $\alpha>0$.
For any $\varepsilon>0$ and any pre-compact open set $U\subset X$, there is $\eta\in L^{2,W}_1(X,\Lambda^+)$ such that 
$\eta=0$ over $U$ and
\[ \norm{d^{*,W}\eta}_{L^{2,W}(X)} < \varepsilon \norm{\eta}_{L^{2,W}(X)}.\]
\end{lemma}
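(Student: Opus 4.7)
The plan is to use Lemma \ref{lemma: infinite dimensionality of weighted harminic self-dual forms} to produce an infinite-dimensional supply of $d^{*,W}$-closed self-dual forms on $X$, then argue by compactness that among these some element must have its weighted $L^2$-mass concentrated outside $U$, and finally cut off that element so as to vanish on $U$ while incurring only a small error in the weighted norms.

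More concretely, I would set $V:=\{\eta\in L^{2,W}_1(X,\Lambda^+)\mid d^{*,W}\eta=0\}$, which by Lemma \ref{lemma: infinite dimensionality of weighted harminic self-dual forms} is infinite-dimensional and closed in $L^{2,W}(X,\Lambda^+)$; the proof of that lemma actually provides the uniform bound $\norm{\eta}_{L^{2,W}_1(X)}\leq \const_{T,\alpha}\norm{\eta}_{L^{2,W}(X)}$ on $V$. Next I would fix a precompact open set $U'$ with $\overline{U}\subset U'$ and a smooth cutoff $\chi:X\to [0,1]$ with $\chi\equiv 1$ on a neighborhood of $\overline{U}$, $\supp \chi\subset U'$, and $|d\chi|\leq C_0$. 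The restriction map $R:V\to L^{2,W}(U',\Lambda^+)$, $\eta\mapsto \eta|_{U'}$, is then compact: its image is $L^2_1(U')$-bounded on the precompact domain $U'$ on which $W$ is pinched between two positive constants, so Rellich's lemma applies. Because $V$ is infinite-dimensional, $R$ cannot be bounded below, and for any prescribed $\delta>0$ one obtains $\eta_0\in V$ with $\norm{\eta_0}_{L^{2,W}(X)}=1$ and $\norm{\eta_0}_{L^{2,W}(U')}<\delta$.

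Setting $\eta:=(1-\chi)\eta_0$, one has $\eta=0$ on $U$. The condition $d^{*,W}\eta_0=0$ is equivalent to $d(W^2\eta_0)=0$, so a short calculation gives $d^{*,W}\eta = *(d\chi\wedge \eta_0)$, hence $\norm{d^{*,W}\eta}_{L^{2,W}(X)}\leq C_0\norm{\eta_0}_{L^{2,W}(U')}<C_0\delta$ because $\supp d\chi\subset U'$. On the other hand $\norm{\eta}_{L^{2,W}(X)}\geq \norm{\eta_0}_{L^{2,W}(X\setminus U')}\geq \sqrt{1-\delta^2}$ by Pythagoras. Choosing $\delta$ sufficiently small in terms of $\varepsilon$ then gives the required ratio.

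The main obstacle I expect is justifying the compactness of $R$: one must extract the uniform $L^{2,W}_1$-bound on $V$ implicit in the proof of Lemma \ref{lemma: infinite dimensionality of weighted harminic self-dual forms} and translate between weighted and unweighted norms on the precompact set $U'$. Once that is in place, everything else is a standard cutoff estimate combined with the basic fact that a compact operator on an infinite-dimensional Banach space cannot be bounded below.
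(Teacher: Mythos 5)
Your proposal is correct and follows essentially the same route as the paper: first show that elements of the infinite-dimensional closed subspace $\mathrm{Ker}(d^{*,W})$ cannot all have a fixed fraction of their weighted $L^2$-mass on a pre-compact set (the paper phrases this via an orthonormal sequence weakly converging to zero plus Rellich, which is the same fact as your ``a compact restriction operator cannot be bounded below''), and then multiply by a cutoff vanishing on $U$ and estimate $d^{*,W}$ of the product by the gradient of the cutoff. The only cosmetic difference is that the paper also takes the cutoff gradient small while you keep it fixed and push all the smallness into $\delta$; both work.
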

\begin{proof}
First we prove the following statement:
For any $\varepsilon>0$ and any pre-compact open set $U\subset X$ there exists 
$\eta \in L^{2,W}_1(X,\Lambda^+)$ satisfying $d^{*,W}\eta=0$ and 
$\norm{\eta}_{L^{2,W}(U)} < \varepsilon \norm{\eta}_{L^{2,W}(X)}$.
Suppose that this statement does not hold.
Then there are $\varepsilon>0$ and a pre-compact open set $U\subset X$ such that all
$\eta\in \mathrm{Ker}(d^{*,W})\subset L^{2,W}_1(X,\Lambda^+)$ satisfies
\[ \norm{\eta}_{L^{2,W}(U)}\geq \varepsilon\norm{\eta}_{L^{2,W}(X)}.\]
$\mathrm{Ker}(d^{*,W})$ is an infinite dimensional closed subspace in $L^{2,W}(X, \Lambda^+)$
(Lemma \ref{lemma: infinite dimensionality of weighted harminic self-dual forms}).
Let $\{\eta_n\}_{n\geq 1}$ be a complete orthonormal basis of $\mathrm{Ker}(d^{*,W})$
with respect to the inner product of $L^{2,W}(X,\Lambda^+)$.
They satisfies 
\[ \norm{\eta_n}_{L^{2,W}(U)} \geq \varepsilon.\]
The sequence $\eta_n$ weakly converges to $0$ in $L^{2,W}(X)$, and hence 
$\eta_n|_{U}$ weakly converges to $0$ in $L^{2,W}(U)$.
Then, by the elliptic regularity and Rellich's lemma, a subsequence of $\eta_n|_U$
strongly converges to $0$ in $L^{2,W}(U)$.
But this contradicts $\norm{\eta_n}_{L^{2,W}(U)}\geq \varepsilon$.

Next take a pre-compact open set $V\subset X$ satisfying the following: 
$U\subset V$ and there exists a smooth 
function $\beta$ such that $0\leq \beta\leq 1$, $\beta=0$ on $U$, $\beta=1$ on $X\setminus V$,
$\supp(d\beta)\subset V$, and $|d\beta|\leq \varepsilon$. 
By the previous argument
there exists $\eta\in L^{2,W}_1(X,\Lambda^+)$ satisfying 
$d^{*, W}\eta=0$ and $\norm{\eta}_{L^{2,W}(V)} < (1/3)\norm{\eta}_{L^{2,W}(X)}$.
Then $\norm{\beta\eta}_{L^{2,W}(X)}> (2/3)\norm{\eta}_{L^{2,W}(X)}$.
Since $d^{*, W}(\beta\eta) = -*(d\beta\wedge \eta)$ is supported in $V$, 
\[ \norm{d^{*,W}(\beta\eta)}_{L^{2,W}(X)}\leq \varepsilon\norm{\eta}_{L^{2,W}(V)}
   <(\varepsilon/3)\norm{\eta}_{L^{2,W}(X)} < (\varepsilon/2)\norm{\beta\eta}_{L^{2,W}(X)}.\]
Hence $\beta\eta\in L^{2,W}_1(X,\Lambda^+)$ satisfies $\beta\eta=0$ over $U$ and 
$\norm{d^{*,W}(\beta\eta)}_{L^{2,W}(X)} < \varepsilon\norm{\beta\eta}_{L^{2,W}(X)}$.
\end{proof}

\section{Linear theory}  \label{section: linear theory}
In this section we always assume $0<\alpha<1$ and 
\begin{equation} \label{eq: condition on T for the linear theory}
 T\geq \max(T_\alpha, T_{-\alpha}).
\end{equation}
Here $T_\alpha$ and $T_{-\alpha}$ are the positive constants introduced in 
Lemma \ref{lemma: quantitative version of the first cohomology of X is zero}.
(Recall that they depend only on $\alpha$.)
The purpose of this section is to prove several basic properties of the linear operators
$D_A$ and $D_A'$ introduced below.
The constants introduced in this section often depend on several parameters
($\alpha$, $T$, $A_0$, $A$, $\mu$).
But we usually don't explicitly write their dependence on parameters
unless it causes a confusion.

\subsection{The image of $D_A$ is closed} \label{subsection: the image of D_A is closed}
Let $E = X\times SU(2)$ be the product principal $SU(2)$-bundle over $X$, and $A_0$ 
be an adapted connection on $E$ (see Definition \ref{def: adapted connection}).
Let $W = e^{\alpha |q(x)|'}$ be the weight function on $X$ introduced 
in Section \ref{subsection: preliminary results over X=Y^sharp}.
For a section $u$ of $\Lambda^i(\ad E)$ $(i\geq 0)$, we define the Sobolev norm $\norm{u}_{L^2_k}$ by using
the periodic metric $g_0$ and the connection $A_0$.
We define the weighted Sobolev norm by $\norm{u}_{L^{2,W}_k} := \norm{W u}_{L^{2}_k}$. 

Let $\Lambda^{+}$ and $\Lambda^-$ be the bundles of self-dual and 
anti-self-dual forms (with respect to the metric $g_0$) on $X$,
and $\mu:\Lambda^-\to \Lambda^+$ be a smooth bundle map. 
We assume $|\mu_x| <1$ for all $x\in X$ (i.e. $|\mu(\omega)|<|\omega|$ for all non-zero $\omega\in \Lambda^-$ where 
the norm $|\cdot|$ is defined by the metric $g_0$).
Moreover we assume that $\mu$ is compact supported.
Hence $\mu$ corresponds to a conformal structure on $X$ which coincides with $[g_0]$ outside a compact set
(see Section \ref{subsection: anti-self-duality and conformal structure}).

We define $\mathcal{A} = \mathcal{A}_{A_0}$ as the space of $L^{2,W}_3$-connections (with respect to $A_0$) on $E$:
\[ \mathcal{A} :=\{A_0+a|\, a\in L^{2,W}_3(X, \Lambda^1(\ad E))\}.\]
(Recall that the connection $A_0$ is used in the definition of the weighted Sobolev space $L_3^{2,W}(X,\Lambda^1(\ad E))$.)
We will need the following multiplication rule:
If $k\geq 3$ and $k\geq l$, then $L^{2,W}_k\times L^{2,W}_l\to L^{2,W}_l$, i.e.  
for $f_1\in L^{2,W}_{k}$ and $f_2\in L^{2,W}_l$ $(k\geq 3,\, k\geq l\geq 0)$
\begin{equation} \label{eq: multiplication rule}
 \norm{f_1 f_2}_{L^{2,W}_l} \leq \const \norm{f_1}_{L^{2,W}_{k}}\norm{f_2}_{L^{2,W}_{l}}.
\end{equation}
In particular, for $A=A_0+a\in \mathcal{A}$, we have 
$F(A)=F(A_0)+d_{A_0}a+a\wedge a\in L^{2,W}_2$.
For $b\in \Omega^1(\ad E)$ over $X$, we set 
\[ D_A b := -d_A^{*,W}b+(d_A^{+}-\mu d_A^-)b = -W^{-2}d_A^{*}(W^2 b) + (d_A^{+}-\mu d_A^-)b.\]
Here $d_A^*b = -*d_A(*b)$ and $d_A^{\pm}=\frac{1}{2}(1\pm *)d_A$.
($*$ is the Hodge star defined by the metric $g_0$.)
$D_{A}$ is an elliptic differential operator since we assume $|\mu_x|<1$ for all $x\in X$.
Rigorously speaking,
we should use the notation $D^{\mu,W}_{A}$ instead of $D_A$.
But here we use the above notation for simplicity.
We have 
\begin{equation} \label{eq: D_A and D_A_0}
 D_Ab = D_{A_0}b + *[a\wedge *b] + [a\wedge b]^+ -\mu([a\wedge b]^-).
\end{equation}
From this and the above (\ref{eq: multiplication rule}), the map 
$D_A :L^{2,W}_{k+1}(X,\Lambda^1(\ad E))\to L^{2,W}_k(X,(\Lambda^0\oplus\Lambda^+)(\ad E))$
$(0\leq k\leq 3)$ becomes a bounded linear map.

Let $r$ be a positive integer such that $q^{-1}(-rT,rT)$ contains the supports of $F(A_0)$ and $\mu$.
Set $U:=q^{-1}(-(r+5/2)T,(r+5/2)T)$.
\begin{lemma}  \label{lemma: elliptic estimate for D_A} 
(i)
For any 
$b\in L^{2,w}_{k+1}(X, \Lambda^1(\ad E))$ $(k\geq 0)$ we have 
\begin{equation} \label{eq: the crucial estimate for the closedness}
 \norm{b}_{L^{2,W}_{k+1}(X)}\leq \const\, (\norm{b}_{L^{2}(U)} + 
   \norm{D_{A_0}b}_{L_k^{2,W}(X)}).
\end{equation}
Here $\const$ is a positive constant independent of $b$.
(We will usually omit this kind of obvious remark below.)

\noindent
(ii) 
For any $A=A_0+a\in \mathcal{A}$, there is a pre-compact open set $U_{A}\subset X$ 
(which depends on $\mu,\alpha,T,A_0,A$) such that 
for any $b\in L^{2,W}_{k+1}(X,\Lambda^1(\ad E))$ $(0\leq k\leq 3)$ 
\begin{equation} \label{eq: elliptic estimate for D_A of A=A_0+a}
 \norm{b}_{L^{2,W}_{k+1}(X)}\leq \const \, (\norm{b}_{L^{2}(U_{A})} + \norm{D_Ab}_{L^{2,W}_k(X)}).
\end{equation}
\end{lemma}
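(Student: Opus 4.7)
Outside the compact set $q^{-1}(-rT,rT)$, both $F(A_0)$ and $\mu$ vanish, so after choosing a local gauge that trivialises $A_0$, the operator $D_{A_0}$ reduces componentwise on $su(2)$ to the scalar operator $-d^{*,W}+d^+$. On the positive (resp.\ negative) end, where $W=e^{\alpha q}$ (resp.\ $W=e^{-\alpha q}$), this is exactly the operator $D^\alpha$ (resp.\ $D^{-\alpha}$) of Section \ref{subsection: preliminary estmates over the tube}. So Lemma \ref{lemma: quantitative version of the first cohomology of X is zero}, applied with both parameters $\alpha$ and $-\alpha$ (legitimate by our standing assumption $T\geq\max(T_\alpha,T_{-\alpha})$), is available on the ends. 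On the central compact region the missing estimate is supplied by standard interior elliptic regularity for the elliptic operator $D_{A_0}$.

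\textbf{Proof of (i).} Introduce a partition of unity $1=\chi+\psi_++\psi_-$ in which $\psi_\pm$ depends only on $q$, vanishes on $\{\pm q\leq(r+1)T\}$, equals $1$ on $\{\pm q\geq(r+2)T\}$, and $\chi=1-\psi_+-\psi_-$; this guarantees $\supp\chi\subset U$ and that $\supp(d\psi_\pm)$ is a compact subset of $U$. The piece $\chi b$ is controlled by interior elliptic regularity for $D_{A_0}$ on $\supp\chi$, using that $W$ is bounded above and below on the compact set $U$ and the Leibniz rule $D_{A_0}(\chi b)=\chi\,D_{A_0}b+c(d\chi,b)$ with $c$ a zeroth-order algebraic operator. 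For $\psi_+b$, pick a gauge $u$ over the simply connected open set $\{q>(r+1)T\}$ with $u(A_0)=\rho$; then $u(\psi_+b)$ extended by zero lies in $L^{2,\alpha}_1(X,\Lambda^1\otimes su(2))$, because on $\supp\psi_+$ the weights $W$ and $e^{\alpha q}$ coincide. Applying Lemma \ref{lemma: quantitative version of the first cohomology of X is zero} componentwise in $su(2)$ and converting back through $u$ gives $\norm{\psi_+b}_{L^{2,W}_1}\leq\const\norm{D_{A_0}(\psi_+b)}_{L^{2,W}}$, and since $c(d\psi_+,b)$ is supported in a compact subset of $U$,
\[\norm{\psi_+b}_{L^{2,W}_1}\leq\const\bigl(\norm{D_{A_0}b}_{L^{2,W}}+\norm{b}_{L^2(U)}\bigr).\]
The estimate for $\psi_-b$ is analogous via Lemma \ref{lemma: quantitative version of the first cohomology of X is zero} with $-\alpha$. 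Summation yields the $k=0$ case of (i). For $k\geq1$ we bootstrap by local interior elliptic estimates for $D_{A_0}$, whose coefficients are translation-invariant on the ends so that the constants are uniform.

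\textbf{Proof of (ii) and main obstacle.} From (\ref{eq: D_A and D_A_0}), $D_A-D_{A_0}$ is a zeroth-order operator bilinear in $a$ and $b$, so by (\ref{eq: multiplication rule}) we have $\norm{(D_A-D_{A_0})b}_{L^{2,W}_k}\leq\const\norm{a}_{L^{2,W}_3}\norm{b}_{L^{2,W}_k}$ for $0\leq k\leq 3$. Substituting $D_{A_0}b=D_Ab-(D_A-D_{A_0})b$ into (i) gives
\[\norm{b}_{L^{2,W}_{k+1}}\leq\const\bigl(\norm{b}_{L^2(U)}+\norm{D_Ab}_{L^{2,W}_k}+\norm{a}_{L^{2,W}_3}\norm{b}_{L^{2,W}_k}\bigr).\]
The main obstacle is that $\norm{a}_{L^{2,W}_3}$ can be arbitrarily large, so the last term cannot be absorbed directly. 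The remedy exploits the weighted integrability of $a$: for any $\varepsilon>0$ there is a compactly supported cutoff $\chi_R$ with $\norm{(1-\chi_R)a}_{L^{2,W}_3}<\varepsilon$. Splitting $a=\chi_R a+(1-\chi_R)a$, the tail contributes at most $\const\varepsilon\norm{b}_{L^{2,W}_{k+1}}$ and is absorbed into the LHS once $\varepsilon$ is fixed small enough (depending on the constant from (i)). The compact part is bounded, on $\supp\chi_R$ where $W$ is comparable to a constant, by $\const\norm{b}_{L^2_k(\supp\chi_R)}$. For $k=0$ this is directly $\leq\const\norm{b}_{L^2(U_A)}$ after choosing any pre-compact $U_A\supset U\cup\supp\chi_R$. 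For $k\geq 1$ we induct on $k$: the local elliptic estimate for $D_A$ on $U_A$ (its principal symbol agrees with $D_{A_0}$'s and its coefficients on the pre-compact set $U_A$ are controlled by $A$) converts $\norm{b}_{L^2_k(\supp\chi_R)}$ into $\const(\norm{b}_{L^2(U_A)}+\norm{D_Ab}_{L^{2,W}_{k-1}})$, which the induction hypothesis allows us to close.
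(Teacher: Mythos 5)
Your proof is correct and follows essentially the same route as the paper: part (i) is obtained by cutting off to the two ends (where $A_0$ is trivialisable and $D_{A_0}$ reduces to $D^{\pm\alpha}$, so Lemma \ref{lemma: quantitative version of the first cohomology of X is zero} applies), handling the compact middle by interior elliptic regularity, and bootstrapping in $k$; part (ii) is the same perturbation-and-absorption argument, with your cutoff of $a$ playing exactly the role of the paper's choice of $U_A$ outside of which $\norm{D_Ab-D_{A_0}b}_{L^{2,W}_k}\leq\varepsilon\norm{b}_{L^{2,W}_k}$. No substantive differences to report.
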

\begin{proof}
(i)
We first consider the case $k=0$.
From Lemma \ref{lemma: quantitative version of the first cohomology of X is zero} and 
the condition (\ref{eq: condition on T for the linear theory}),
for any $b_1\in L^{2,\alpha}_1(X,\Lambda^1)$ and $b_2\in L^{2,-\alpha}_1(X,\Lambda^1)$
\begin{gather}
 \norm{b_1}_{L^{2,\alpha}_1} \leq \const \norm{D^{\alpha} b_1}_{L^{2,\alpha}} \label{eq: posotive alpha case},\\
 \norm{b_2}_{L^{2,-\alpha}_1}\leq \const \norm{D^{-\alpha} b_2}_{L^{2,-\alpha}} \label{eq: negative alpha case}.
\end{gather}
Let $b \in L^{2,W}_1(X,\Lambda^1(\ad E))$.
Let $\beta$ be a smooth function on $X$ such that $\beta = 0$ on $t\leq (r+1/2)T$ and $\beta = 1$ on $t\geq (r+1)T$
$(t=q(x))$.
Recall that $\supp (\mu)$ and $\supp (F_{A_0})$ are contained in $q^{-1}(-rT,rT)$ and that 
$W=e^{\alpha t}$ for $t\geq 1$.
By applying the above (\ref{eq: posotive alpha case}) to $\beta a$, we get 
\begin{equation} \label{eq: estimate for beta a}
 \norm{\beta b}_{L_1^{2,W}(X)} \leq \const \norm{D_{A_0} (\beta b)}_{L^{2,W}(X)}
   \leq \const \, (\norm{b}_{L^{2}(U)} + \norm{D_{A_0}b}_{L^{2,W}(X)}).
\end{equation} 
Let $\beta'$ be a smooth function on $X$ such that $\beta'=0$ on $t\geq -(r+1/2)T$ and $\beta'=1$ on $t\leq -(r+1)T$.
By applying (\ref{eq: negative alpha case}) to $\beta' b$, we get 
\begin{equation} \label{eq: estimate for beta' a}
 \norm{\beta' b}_{L^{2,W}_1(X)}\leq \const \norm{D_{A_0}(\beta' b)}_{L^{2,W}(X)}
   \leq \const \, (\norm{b}_{L^{2}(U)} + \norm{D_{A_0}b}_{L^{2,W}(X)}) .
\end{equation}
From the elliptic regularity, 
\[ \norm{b}_{L^{2,W}_1(q^{-1}(-(r+3/2)T,(r+3/2)T))} \leq \const\,
   (\norm{b}_{L^{2}(U)} + \norm{D_{A_0}b}_{L^{2}(U)}).\]
This estimate and the above (\ref{eq: estimate for beta a}) and (\ref{eq: estimate for beta' a}) imply 
\begin{equation}\label{eq: the crucial estimate for the closedness for k=0}
 \norm{b}_{L^{2,W}_1(X)}\leq \const \, (\norm{b}_{L^{2}(U)}+\norm{D_{A_0}b}_{L^{2,W}(X)}).
\end{equation}

Next let $b\in L^{2,W}_{k+1}(X,\Lambda^1(\ad E))$.
From the elliptic regularity, for any $n\in \mathbb{Z}$
\begin{equation*}
 \begin{split}
 \norm{b}_{L^{2,W}_{k+1}(q^{-1}((n-1/2)T,\,(n+1/2)T))} &= \norm{Wb}_{L^{2}_{k+1}(q^{-1}((n-1/2)T,\,(n+1/2)T))} \\
 &\leq \const \, (\norm{Wb}_{L^2_k(Y^{(n)}_T)} + \norm{D_{A_0}(Wb)}_{L^2_k(Y^{(n)}_T)}) \\
 &\leq \const \, (\norm{b}_{L^{2,W}_k(Y^{(n)}_T)} + \norm{D_{A_0}b}_{L^{2,W}_k(Y^{(n)}_T)}).
 \end{split}
\end{equation*}
The above two ``const'' are independent of $n\in \mathbb{Z}$.
Therefore
\begin{equation} \label{eq: ellitptic estimate for D_{A_0}}
 \norm{b}_{L^{2,W}_{k+1}(X)}\leq \const\, (\norm{b}_{L^{2,W}_k(X)}+\norm{D_{A_0}b}_{L^{2,W}_k(X)}).
\end{equation}
By using this estimate and the above (\ref{eq: the crucial estimate for the closedness for k=0}), 
we can inductively prove (\ref{eq: the crucial estimate for the closedness}).

(ii) 
From (i)
\begin{equation} \label{eq: the crucial estimate for the closedness, re-visited}
 \norm{b}_{L^{2,W}_{k+1}(X)}\leq C(\norm{b}_{L^{2}(U)} + \norm{D_{A_0} b}_{L^{2,W}_k(X)}),
\end{equation}
where the positive constant $C$ depends on $\mu,\alpha,T,A_0$.
Take $\varepsilon>0$ so that $C\varepsilon<1$.
From (\ref{eq: multiplication rule}), (\ref{eq: D_A and D_A_0}) and $a\in L^{2,W}_3$,
there is a positive integer $r_A>r$ ($U_A := q^{-1}(-(r_A+5/2)T,(r_A+5/2)T)\supset U$) such that 
\[ \norm{D_A b-D_{A_0}b}_{L^{2,W}_k(X\setminus \overline{U_{A}})} 
 \leq \varepsilon\norm{b}_{L^{2,W}_{k}(X)} \quad (0\leq k\leq 3).\]
On the other hand
\[ \norm{D_A b- D_{A_0}b}_{L^{2,W}_k(U_{A})}\leq \const \norm{b}_{L^{2}_k(U_A)}.\]
Therefore, from (\ref{eq: the crucial estimate for the closedness, re-visited}),
\[ \norm{b}_{L^{2,W}_{k+1}(X)}\leq \const \, (\norm{b}_{L^{2}_k(U_A)}+\norm{D_Ab}_{L^{2,W}_k(X)})
   + C\varepsilon \norm{b}_{L^{2,W}_{k}(X)}.\]
Since $C\varepsilon<1$, we get 
\[ \norm{b}_{L^{2,W}_{k+1}(X)} \leq \const \, (\norm{b}_{L^{2}_k(U_A)} + \norm{D_Ab}_{L^{2,W}_k(X)}).\]
By the induction on $k$, we get (\ref{eq: elliptic estimate for D_A of A=A_0+a}).
\end{proof}
\begin{proposition}\label{prop: closedness of the image of the linearlization}
Let $A\in \mathcal{A}$.
If $b\in L^{2,W}(X,\Lambda^1(\ad E))$ satisfies $D_Ab=0$ as a distribution, then $b\in L^{2,W}_4(X)$.
Let $0\leq k\leq 3$.
The kernel of the map $D_A:L^{2,W}_{k+1}(X,\Lambda^1(\ad E))\to L^{2,W}_k(X,(\Lambda^0\oplus\Lambda^+)(\ad E))$ 
is of finite dimension, and 
the image $D_A(L^{2,W}_{k+1}(X, \Lambda^1(ad E)))$ is closed in $L^{2,W}_k(X, (\Lambda^0\oplus \Lambda^+)(\ad E))$.
\end{proposition}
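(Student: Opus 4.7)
The plan is to derive all three conclusions from the a priori elliptic estimate in Lemma \ref{lemma: elliptic estimate for D_A}(ii), combined with local elliptic regularity and Rellich's compactness lemma on the pre-compact set $U_A$.

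For the regularity claim, local elliptic regularity of the smooth elliptic operator $D_A$ shows that any distributional solution of $D_A b = 0$ is $\mathcal{C}^\infty$. To upgrade this to the weighted statement $b \in L^{2,W}_4$, I would exploit the periodic structure of $X$: for $|n|$ large, the coefficients of $D_A$ on $Y^{(n)}_T$ are uniformly controlled (since $F_{A_0}$ and $\mu$ have compact support, and $a = A - A_0 \in L^{2,W}_3 \hookrightarrow \mathcal{C}^0$), so interior elliptic estimates yield
\[ \norm{b}_{L^2_4(Y^{(n)}_{T-1})} \leq \const \cdot \norm{b}_{L^2(Y^{(n)}_T)} \]
with a constant independent of $n$. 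Multiplying by the weight, using that $W$ is essentially constant on each piece with $|\nabla^j W| \leq \const \cdot W$, and summing over $n$ (handling a finite central region by standard interior regularity), yields $b \in L^{2,W}_4(X)$.

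For the finite dimensionality of $\mathrm{Ker}\, D_A$, let $\{b_n\}$ be any bounded sequence in $\mathrm{Ker}\, D_A$. Applying Lemma \ref{lemma: elliptic estimate for D_A}(ii) with $D_A b_n = 0$ gives
\[ \norm{b_n}_{L^{2,W}_{k+1}(X)} \leq \const \cdot \norm{b_n}_{L^2(U_A)}, \]
so Rellich's lemma on the pre-compact set $U_A$ produces a subsequence convergent in $L^2(U_A)$, which is then Cauchy in $L^{2,W}_{k+1}(X)$ by applying the same estimate to the differences $b_n - b_m$. Hence the closed unit ball of $\mathrm{Ker}\, D_A$ is norm-compact, forcing $\mathrm{Ker}\, D_A$ to be finite-dimensional.

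For the closedness of the image, I would first establish the improved estimate $\norm{b}_{L^{2,W}_{k+1}(X)} \leq \const \cdot \norm{D_A b}_{L^{2,W}_k(X)}$ valid on a fixed closed complement $H$ of $\mathrm{Ker}\, D_A$ in $L^{2,W}_{k+1}(X)$. The proof is by contradiction: otherwise, a sequence $b_n \in H$ with $\norm{b_n}_{L^{2,W}_{k+1}} = 1$ and $D_A b_n \to 0$ would, by the same Rellich-plus-elliptic-estimate argument, produce a limit $b \in H \cap \mathrm{Ker}\, D_A = \{0\}$, contradicting unit norm. Given this estimate, any Cauchy sequence $D_A b_n$ with $b_n \in L^{2,W}_{k+1}$ corresponds, after projecting $b_n$ onto $H$, to a Cauchy sequence in $H$, whose limit $b$ satisfies $D_A b = \lim D_A b_n$, giving closedness. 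The main obstacle I anticipate is the regularity step: Lemma \ref{lemma: elliptic estimate for D_A} is an \emph{a priori} estimate requiring $b \in L^{2,W}_{k+1}$ from the outset, so it cannot be applied directly to a mere distributional solution; one must patch together local unweighted elliptic regularity on each periodic piece, using the near-translation-invariance of $D_A$ at infinity and the slow variation of $W$ to obtain uniform constants in $n$.
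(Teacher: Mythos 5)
Your proof is correct and follows essentially the same route as the paper: the a priori estimate of Lemma \ref{lemma: elliptic estimate for D_A}(ii) plus Rellich's lemma on the pre-compact set $U_A$ gives finite dimensionality of the kernel and a coercive estimate $\norm{b}_{L^{2,W}_{k+1}}\leq \const\norm{D_Ab}_{L^{2,W}_k}$ on a closed complement of the kernel (the paper takes the $L^{2,W}$-orthogonal complement), whence closedness of the image. Your treatment of the regularity statement is in fact more careful than the paper's, which simply asserts that it ``follows from Lemma \ref{lemma: elliptic estimate for D_A}(ii)''; your point that one must first invoke local elliptic regularity and then uniform-in-$n$ interior estimates over the periodic pieces before the weighted a priori estimate can be applied is exactly the right way to justify that step.
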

\begin{proof}
The first regularity statement ($D_Ab=0 \Rightarrow b\in L^{2,W}_4$)
follows from Lemma \ref{lemma: elliptic estimate for D_A} (ii).
Let $\mathrm{Ker}D_A$ be the space of $b\in L^{2,W}_4(X, \Lambda^1(\ad E))$ satisfying 
$D_Ab=0$.
For any $b\in \ker D_A$, $\norm{b}_{L^{2,W}_{4}(X)}\leq \const \norm{b}_{L^{2}(U_A)}$
by Lemma \ref{lemma: elliptic estimate for D_A} (ii).
Here $U_A$ is a pre-compact open set.
Then the standard argument using Rellich's lemma shows the finite dimensionality of $\ker D_A$.
\begin{sublemma}\label{sublemma: elliptic estimate for L^{2,w}-orthogonal element}
If $b\in L^{2,W}_{k+1}(X,\Lambda^1(\ad E))$ $(0\leq k\leq 3)$ is $L^{2,W}$-orthogonal to $\mathrm{Ker}D_A$
(i.e. $\int_X W^2\langle b, \beta\rangle d\vol =0$ for all 
$\beta\in \mathrm{Ker}D_A$)
then 
\[ \norm{b}_{L^{2,W}_{k+1}(X)}\leq \const \norm{D_Ab}_{L^{2,W}_k(X)}.\]
\end{sublemma}
\begin{proof}
It is enough to prove  
$\norm{b}_{L^{2}(U_A)}\leq \const \norm{D_Ab}_{L^{2,W}(X)}$.
Since $U_A$ is pre-compact, this follows from the standard argument using
Lemma \ref{lemma: elliptic estimate for D_A} (ii) and Rellich's lemma.
\end{proof}
Let $H\subset L^{2,W}_{k+1}(X,\Lambda^1(\ad E))$ be the $L^{2,W}$-orthogonal complement of $\ker D_A$.
Then Sublemma \ref{sublemma: elliptic estimate for L^{2,w}-orthogonal element} shows that
$\mathrm{image} (D_A) = D_A(H)$ is a closed subspace in $L^{2,W}_k(X, \Lambda^1(\ad E))$.
\end{proof}

\subsection{The kernel of $D_A'$ is infinite dimensional} \label{subsection: ker D_A' is infinite dimensional}
For $\mu:\Lambda^-\to \Lambda^+$ we define $\mu^*:\Lambda^+\to \Lambda^-$ by 
\[ \mu(\xi)\wedge \eta = \xi\wedge \mu^*(\eta) \quad (\xi\in \Lambda^-, \eta\in \Lambda^+).\]
Let $A = A_0+a\in \mathcal{A}$.
For $\omega\in \Omega^2(\ad E)$, we set 
$d_A^{*,W}\omega = -W^{-2}*d_A(*W^2\omega)$.
If $b\in \Omega^1(\ad E)$ and $\omega\in \Omega^2(\ad E)$ have compact supports, then 
$\int_X W^2\langle d_Ab, \omega\rangle d\vol = \int_X W^2\langle b, d_A^{*,W}\omega\rangle d\vol$.
For $\rho=(u, \eta)\in \Omega^0(\ad E)\oplus \Omega^+(\ad E)$, 
we set
\[ D'_A\rho := -d_Au+d_A^{*,W}(1+\mu^*)\eta = -d_Au - W^{-2}* d_A(W^2(1-\mu^*)\eta).\]
$D_A'$ is an elliptic differential operator. 
If $b\in \Omega^1(\ad E)$ and $\rho\in \Omega^0(\ad E)\oplus \Omega^+(\ad E)$ have compact supports, then 
$\int_X W^2\langle D_A b, \rho\rangle d\vol = \int_X W^2\langle b, D_A'\rho\rangle d\vol$.
We have 
\begin{equation} \label{eq: D'_A and D'_{A_0}}
 D_A'(u,\eta) = D_{A_0}'(u,\eta) -[a,u]-*[a\wedge (1-\mu^*)\eta].
\end{equation}
From the multiplication rule (\ref{eq: multiplication rule}),
$D'_A$ defines a bounded linear map $D_A':L^{2,W}_{k+1}(X,(\Lambda^0 \oplus \Lambda^+)(\ad E))\to L^{2,W}_k(X,\Lambda^1(\ad E))$
for $0\leq k\leq 3$.
\begin{lemma}  \label{lemma: elliptic estimate for D'_A}
For any $\rho\in L^{2,W}_{k+1}(X,(\Lambda^0\oplus\Lambda^+)(\ad E))$ $(0\leq k\leq 3)$,
\[ \norm{\rho}_{L^{2,W}_{k+1}(X)}\leq \const \, (\norm{\rho}_{L^{2,W}(X)} + \norm{D'_A\rho}_{L^{2,W}_k(X)}).\]
Hence if $\rho\in L^{2,W}(X)$ satisfies $D_A'\rho=0$ as a distribution, then $\rho\in L^{2,W}_4(X)$.
\end{lemma}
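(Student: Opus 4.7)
The plan is to mirror the strategy used in Lemma \ref{lemma: elliptic estimate for D_A}: first establish the estimate for the reference operator $D'_{A_0}$ by patching local elliptic estimates over the periodic pieces, then absorb the $a$-dependent perturbation as a zeroth-order error, and finally deduce the regularity statement by a cut-off bootstrap.

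First I would handle $D'_{A_0}$. The operator $D'_{A_0}$ is elliptic, and on each piece $Y^{(n)}_T$ with $|n|$ large the weight $W = e^{\alpha |q(x)|'}$ equals $e^{\pm \alpha nT}$ times a translate of a fixed reference weight; this multiplicative constant cancels on both sides of any local elliptic estimate. Interior elliptic regularity on the pre-compact enlargement $q^{-1}((n-1)T+1,(n+1)T-1)$ of each piece therefore gives
\[ \norm{\rho}_{L^{2,W}_{k+1}(q^{-1}((n-1/2)T,\,(n+1/2)T))} \leq \const\,(\norm{\rho}_{L^{2,W}_k(Y^{(n)}_T)} + \norm{D'_{A_0}\rho}_{L^{2,W}_k(Y^{(n)}_T)}), \]
with $\const$ independent of $n$ (the finitely many pieces with small $|n|$ contribute only a bounded compact-set estimate that is absorbed into the constant). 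Squaring and summing over $n \in \mathbb{Z}$ yields the global estimate
\[ \norm{\rho}_{L^{2,W}_{k+1}(X)} \leq \const\,(\norm{\rho}_{L^{2,W}_k(X)} + \norm{D'_{A_0}\rho}_{L^{2,W}_k(X)}), \]
and iterating downward on $k$ reduces the first term on the right to $\norm{\rho}_{L^{2,W}(X)}$.

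Next I would pass from $D'_{A_0}$ to $D'_A$. By (\ref{eq: D'_A and D'_{A_0}}) the difference $D'_A\rho - D'_{A_0}\rho = -[a,u] - *[a\wedge (1-\mu^*)\eta]$ is a zeroth-order multiplication operator whose coefficient lies in $L^{2,W}_3$. The multiplication rule (\ref{eq: multiplication rule}) then gives $\norm{D'_A\rho - D'_{A_0}\rho}_{L^{2,W}_k} \leq \const\,\norm{a}_{L^{2,W}_3}\norm{\rho}_{L^{2,W}_k}$ for $0\le k\le 3$. Substituting into the $D'_{A_0}$-estimate and inducting on $k$ from $k=0$ upward delivers the claimed inequality.

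Finally, the regularity assertion follows by a standard cut-off argument. The coefficient $a \in L^{2,W}_3$ embeds continuously into $C^0_{\text{loc}}$ by Sobolev embedding in dimension four, so local interior elliptic regularity for the distributional equation $D'_A\rho=0$ with $\rho \in L^{2,W}$ bootstraps $\rho$ into $L^2_{4,\mathrm{loc}}$. Applied to cut-offs $\chi_n\rho$ with $|d\chi_n|\to 0$ as $n\to \infty$, the a priori estimate combined with the commutator bound $\norm{[D'_A,\chi_n]\rho}_{L^{2,W}_k} \leq \const\,\norm{d\chi_n}_{C^k}\cdot \norm{\rho}_{L^{2,W}_k(\mathrm{supp}\,d\chi_n)}$ then promotes this local regularity to the global conclusion $\rho \in L^{2,W}_4$. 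The main obstacle, as in Lemma \ref{lemma: elliptic estimate for D_A}, is ensuring uniformity in $n$ of the local elliptic constants for $D'_{A_0}$; this is resolved by the observation that the non-periodicity of $W$ amounts to only a constant rescaling on each piece, which does not affect local estimates.
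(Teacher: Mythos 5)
Your proposal is correct and follows essentially the same route as the paper: the paper likewise derives the $D'_{A_0}$-estimate by the uniform piecewise elliptic estimates used for (\ref{eq: ellitptic estimate for D_{A_0}}) (summing over $n$ and iterating in $k$), passes to $D'_A$ via the multiplication rule (\ref{eq: multiplication rule}) applied to the zeroth-order difference (\ref{eq: D'_A and D'_{A_0}}), and then reads off the regularity statement. Your cut-off bootstrap for the final regularity claim just spells out what the paper leaves as "easily follows".
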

\begin{proof}
In the same way as in the proof of the estimate (\ref{eq: ellitptic estimate for D_{A_0}}), we get 
\[ \norm{\rho}_{L^{2,W}_{k+1}(X)}\leq \const \,
  (\norm{\rho}_{L^{2,W}(X)}+\norm{D'_{A_0}\rho}_{L^{2,W}_k(X)}).\]
By using the multiplication rule (\ref{eq: multiplication rule}), we get the desired estimate.
The regularity statement easily follows from the above estimate.
\end{proof}
Let $\mathrm{Ker}D_A$ be the space of $b\in L^{2,W}_4(X, \Lambda^1(\ad E))$ satisfying $D_A b=0$, and 
$\mathrm{Ker}D_A'$ be the space of $\rho\in L^{2,W}_4(X, (\Lambda^0\oplus \Lambda^+)(\ad E))$ satisfying 
$D_A' \rho=0$.
\begin{lemma} \label{lemma: Hodge decomposition of Lambda^0 Lambda^+}
Let $A\in \mathcal{A}$ and $0\leq k\leq 3$.

\noindent
(i) We have the following $L^{2,W}$-orthogonal decomposition:
\[L^{2,W}_k(X,(\Lambda^0\oplus\Lambda^+)(\ad E)) = D_A(L_{k+1}^{2,W}(X, \Lambda^1(\ad E)))\oplus \mathrm{Ker}D'_A.\]

\noindent 
(ii)
If $\rho \in L^{2,W}_{k+1}(X,(\Lambda^0\oplus \Lambda^+)(\ad E))$
is $L^{2,W}$-orthogonal to the space $\mathrm{Ker}D'_A$, then 
\[ \norm{\rho}_{L^{2,W}_{k+1}(X)} \leq \const \norm{D'_A\rho}_{L^{2,W}_k(X)}.\] 
Hence $D'_A(L^{2,W}_{k+1}(X,(\Lambda^0\oplus \Lambda^+)(\ad E)))$ is a closed subspace in 
$L^{2,W}_k(X, \Lambda^1(\ad E))$.

\noindent
(iii) We have the following $L^{2,W}$-orthogonal decomposition:
\[ L^{2,W}_k(X,\Lambda^1(\ad E)) = D'_A(L^{2,W}_{k+1}(X,(\Lambda^0\oplus \Lambda^+)(\ad E)))\oplus 
   \mathrm{Ker}D_A.\]

\end{lemma}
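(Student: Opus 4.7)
The plan is to establish (i) first via an $L^{2,W}$-level adjoint computation and then bootstrap to level $k$, deduce (ii) from (i) through a short integration-by-parts together with the Sublemma inside Proposition \ref{prop: closedness of the image of the linearlization}, and finally treat (iii) in parallel to (i), with (ii) playing the role that Proposition \ref{prop: closedness of the image of the linearlization} played for (i).

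For (i), I first work at the $L^{2,W}$ level. By Proposition \ref{prop: closedness of the image of the linearlization} with $k=0$, the image $D_A(L^{2,W}_1(X,\Lambda^1(\ad E)))$ is closed in $L^{2,W}(X,(\Lambda^0\oplus\Lambda^+)(\ad E))$. Its $L^{2,W}$-orthogonal complement is the set of $\rho\in L^{2,W}$ satisfying $\int_X W^2\langle D_A b,\rho\rangle\,d\vol=0$ for all $b\in L^{2,W}_1$; by density of compactly supported smooth sections and the formal adjoint identity $\int W^2\langle D_A b,\rho\rangle=\int W^2\langle b,D'_A\rho\rangle$ (valid when one factor is compactly supported and extended by limits), this complement coincides with the distributional kernel of $D'_A$ in $L^{2,W}$. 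Lemma \ref{lemma: elliptic estimate for D'_A} promotes any such $\rho$ to $L^{2,W}_4$, so the complement is exactly $\mathrm{Ker}D'_A$, yielding the $L^{2,W}$-splitting $L^{2,W}=D_A(L^{2,W}_1)\oplus\mathrm{Ker}D'_A$. For $k\le 3$ in general, I take $\eta\in L^{2,W}_k\subset L^{2,W}$, apply this splitting to write $\eta=D_A b+\sigma$ with $b\in L^{2,W}_1$ ($L^{2,W}$-orthogonal to $\mathrm{Ker}D_A$) and $\sigma\in\mathrm{Ker}D'_A\subset L^{2,W}_4\subset L^{2,W}_k$. Then $D_A b=\eta-\sigma\in L^{2,W}_k$, and the elliptic regularity of $D_A$ (applied patch-by-patch on the $Y^{(n)}_T$ as in the derivation of (\ref{eq: ellitptic estimate for D_{A_0}}), with lower-order $a$-terms controlled via the multiplication rule (\ref{eq: multiplication rule})) bootstraps $b$ from $L^{2,W}_1$ to $L^{2,W}_{k+1}$.

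For (ii), given $\rho\in L^{2,W}_{k+1}$ with $\rho\perp\mathrm{Ker}D'_A$ in $L^{2,W}$, part (i) at $k=0$ produces $c\in L^{2,W}_1$ with $\rho=D_A c$ and $c$ taken $L^{2,W}$-orthogonal to $\mathrm{Ker}D_A$. Approximating $c$ and $\rho$ by compactly supported smooth sections in $L^{2,W}_1$ and $L^{2,W}_{k+1}$ respectively and passing to the limit justifies
\[
\norm{\rho}_{L^{2,W}}^2=\int_X W^2\langle D_A c,\rho\rangle\,d\vol=\int_X W^2\langle c,D'_A\rho\rangle\,d\vol\le\norm{c}_{L^{2,W}}\norm{D'_A\rho}_{L^{2,W}}.
\]
Sublemma \ref{sublemma: elliptic estimate for L^{2,w}-orthogonal element} at $k=0$ applied to $c$ gives $\norm{c}_{L^{2,W}}\le\norm{c}_{L^{2,W}_1}\le \mathrm{const}\cdot\norm{D_A c}_{L^{2,W}}=\mathrm{const}\cdot\norm{\rho}_{L^{2,W}}$, so $\norm{\rho}_{L^{2,W}}\le\mathrm{const}\cdot\norm{D'_A\rho}_{L^{2,W}}$. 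Feeding this into Lemma \ref{lemma: elliptic estimate for D'_A} upgrades it to the full $\norm{\rho}_{L^{2,W}_{k+1}}\le\mathrm{const}\cdot\norm{D'_A\rho}_{L^{2,W}_k}$, and closedness of $D'_A(L^{2,W}_{k+1})$ in $L^{2,W}_k$ then follows by a standard Cauchy-sequence argument (after projecting out $\mathrm{Ker}D'_A$).

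For (iii), I mirror the argument of (i). By (ii) at $k=0$, $D'_A(L^{2,W}_1)$ is closed in $L^{2,W}$, and its $L^{2,W}$-orthogonal complement is the set of distributional solutions of $D_A c=0$ in $L^{2,W}$, which by the regularity statement of Proposition \ref{prop: closedness of the image of the linearlization} is the finite-dimensional $\mathrm{Ker}D_A$. This yields $L^{2,W}=D'_A(L^{2,W}_1)\oplus\mathrm{Ker}D_A$. For $b\in L^{2,W}_k$, write $b=D'_A\rho+c$ with $\rho\in L^{2,W}_1$ and $c\in\mathrm{Ker}D_A\subset L^{2,W}_4$; then $D'_A\rho=b-c\in L^{2,W}_k$, and the regularity bootstrap for the elliptic operator $D'_A$ afforded by Lemma \ref{lemma: elliptic estimate for D'_A} lifts $\rho$ to $L^{2,W}_{k+1}$. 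The step I expect to be the most delicate is the integration-by-parts in (ii) together with the two regularity bootstraps: one needs density of compactly supported smooth sections in each $L^{2,W}_m$ (so that the adjoint identity extends by limits from $C^\infty_0$-pairs), and one must verify that the local elliptic estimates for $D_A$ and $D'_A$ on the pieces $Y^{(n)}_T$ sum with constants independent of $n$ — but both are bookkeeping issues of the type already handled in the derivation of (\ref{eq: ellitptic estimate for D_{A_0}}), so no new analytic input is expected.
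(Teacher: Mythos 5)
Your proposal is correct and follows essentially the same route as the paper: part (i) via closedness of $D_A(L^{2,W}_1)$ (Proposition \ref{prop: closedness of the image of the linearlization}) plus identification of the orthogonal complement with the distributional kernel of $D'_A$ and the regularity of Lemma \ref{lemma: elliptic estimate for D'_A}, part (ii) via the integration by parts $\norm{\rho}_{L^{2,W}}^2=\langle c,D'_A\rho\rangle_{L^{2,W}}$ combined with Sublemma \ref{sublemma: elliptic estimate for L^{2,w}-orthogonal element}, and part (iii) mirroring (i) with (ii) supplying the closedness. The only differences are expository (you spell out the density and patch-by-patch bootstrap arguments that the paper leaves implicit).
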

\begin{proof}
(i)
$\mathrm{Ker}D_A'$ is closed in $L^{2,W}$.
From Proposition \ref{prop: closedness of the image of the linearlization}, $D_A(L^{2,W}_{k+1})$ is closed in $L^{2,W}_k$,
and it is $L^{2,W}$-orthogonal to $\mathrm{Ker}D'_A$.
If $\rho\in L^{2,W}((\Lambda^0\oplus\Lambda^+)(\ad E))$ is $L^{2,W}$-orthogonal to the space $D_A(L^{2,W}_1)$, then 
$D_A'\rho=0$ as a distribution.
Hence $L^{2,W}=D_A(L^{2,W}_1)\oplus \mathrm{Ker}D_A'$.
By this decomposition,
for $\rho\in L^{2,W}_k((\Lambda^0\oplus\Lambda^+)(\ad E))$,
there are $b\in L^{2,W}_1$ and $\rho'\in \mathrm{Ker}D_A'$ satisfying $\rho = D_Ab+\rho'$.
By Lemma \ref{lemma: elliptic estimate for D'_A}, $\rho'\in L^{2,W}_4$ and hence 
$D_Ab=\rho-\rho'\in L^{2,W}_k$.
Then by Lemma \ref{lemma: elliptic estimate for D_A} (ii), $b\in L^{2,W}_{k+1}$.
This shows $L^{2,W}_k = D_A(L^{2,W}_{k+1})\oplus \mathrm{Ker} D'_A$.

(ii)
By (i), 
there is $b\in L^{2,W}_{k+1}(X,\Lambda^1(\ad E))$ satisfying $\rho = D_Ab$.
We can choose $b$ so that it is $L^{2,W}$-orthogonal to $\mathrm{Ker} D_A$ and that
$\norm{b}_{L^{2,W}}\leq \const \norm{D_Ab}_{L^{2,W}} = \const \norm{\rho}_{L^{2,W}}$
(by Sublemma \ref{sublemma: elliptic estimate for L^{2,w}-orthogonal element}).
Then 
\[ \norm{\rho}_{L^{2,W}}^2=\langle \rho, D_Ab\rangle_{L^{2,W}} 
   = \langle D_A'\rho, b\rangle_{L^{2,W}}\leq \norm{D_A'\rho}_{L^{2,W}}\norm{b}_{L^{2,W}}
   \leq \const \norm{D_A'\rho}_{L^{2,W}}\norm{\rho}_{L^{2,W}}.\]
Thus $\norm{\rho}_{L^{2,W}}\leq \const \norm{D_A'\rho}_{L^{2,W}}$.
Then by using Lemma \ref{lemma: elliptic estimate for D'_A}, we get the desired estimate.

(iii)
$D'_A(L^{2,W}_{k+1})$ is $L^{2,W}$-orthogonal to $\mathrm{Ker}D_A$.
If $b\in L^{2,W}(X,\Lambda^1(\ad E))$ is $L^{2,W}$-orthogonal to the space $D'_A(L^{2,W}_{1})$,
then $D_Ab=0$ as a distribution.
Hence $L^{2,W}=D'_A(L^{2,W}_1)\oplus \mathrm{Ker}D_A$.
From this result, for any $b\in L^{2,W}_k(X,\Lambda^1(\ad E))$, there are 
$\rho\in L^{2,W}_1(X,(\Lambda^0\oplus\Lambda^+)(\ad E))$ and $\beta\in \mathrm{Ker}D_A$
satisfying $b=D'_A\rho + \beta$.
From Lemma \ref{prop: closedness of the image of the linearlization}, $\beta\in L^{2,W}_4$.
Thus $D'_A\rho\in L^{2,W}_k$, and hence (Lemma \ref{lemma: elliptic estimate for D'_A}) 
$\rho\in L^{2,W}_{k+1}$.
\end{proof}
\begin{lemma}\label{lemma: Coulom gauge}
Let $0\leq k\leq 3$, and $A\in \mathcal{A}$ be a $\mu$-ASD connection (i.e. $F^+_A = \mu(F^-_A)$).
Let $\mathrm{Ker}d_A^{*,W}\cap L^{2,W}_k$ be the space of $b\in L^{2,W}_k(X,\Lambda^1(\ad E))$ satisfying
$d_A^{*, W}b = W^{-2}d_A^{*}(W^2b)=0$ as a distribution.
Then the following map is isomorphic:
\begin{equation} \label{eq: linearlization of Coulomb gauge condition}
 L^{2,W}_{k+1}(X,\Lambda^0(\ad E))\oplus (\mathrm{Ker}d_A^{*,W}\cap L^{2,W}_k)\to L^{2,W}_k(X, \Lambda^1(\ad E)),\quad
 (u, b)\mapsto -d_Au + b.
\end{equation}
\end{lemma}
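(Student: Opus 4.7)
The plan is the standard weighted Coulomb gauge construction: given $c \in L^{2,W}_k(X,\Lambda^1(\ad E))$, I would solve the weighted elliptic equation $d_A^{*,W} d_A u = -d_A^{*,W} c$ for $u$ and then set $b := c + d_A u$; then $(u,b)$ automatically lies in $L^{2,W}_{k+1}(\Lambda^0(\ad E)) \oplus (\ker d_A^{*,W}\cap L^{2,W}_k)$ and maps to $c$. Uniqueness of the pair amounts to the absence of non-zero $A$-parallel sections of $\ad E$ in $L^{2,W}$, while existence reduces to the invertibility of the weighted Laplacian $\Delta_A^W := d_A^{*,W} d_A$ on 0-forms.

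For injectivity, suppose $-d_A u + b = 0$ with $b\in \ker d_A^{*,W}$. Applying $d_A^{*,W}$ gives $\Delta_A^W u = 0$. Since $u\in L^{2,W}_{k+1}$ and $d_A u \in L^{2,W}_k$, a standard cutoff integration by parts (the annular cross-term is bounded by $\norm{u}_{L^{2,W}(\text{annulus})}\cdot \norm{d_A u}_{L^{2,W}}$, which tends to $0$) yields $\norm{d_A u}_{L^{2,W}}^2 = 0$. Hence $d_A u = 0$, so $b = 0$ and $|u|$ is locally constant on $X$; but the weight $W = e^{\alpha|q|'}$ grows exponentially on each of the two infinite-volume ends, so $u\in L^{2,W}$ forces $|u|\equiv 0$.

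For surjectivity I would apply the Lax--Milgram theorem to the bilinear form $B(u,v) := \langle d_A u, d_A v\rangle_{L^{2,W}}$ on the Hilbert space $L^{2,W}_1(\Lambda^0(\ad E))$, with continuous linear functional $\ell(v) := -\langle c, d_A v\rangle_{L^{2,W}}$. The essential ingredient is the weighted Poincar\'e inequality
\[
 \norm{u}_{L^{2,W}_1(X)} \leq \const \norm{d_A u}_{L^{2,W}(X)}
\]
for $\ad E$-valued functions. I would establish it by reproducing the structure of Section \ref{section: preliminaries for linear theory}: a one-variable Hardy-type inequality on $\mathbb{R}$ (handling the constant mode on the $S^3$-cross-sections) combined with the spectral gap of $\Delta_{S^3}$ of Lemma \ref{lemma: first non-zero eigenvalue of Laplacians on functions} (for the higher modes) gives the estimate on each cylindrical end; cutoff and patching as in the proof of Lemma \ref{lemma: quantitative version of the first cohomology of X is zero} extends it globally on $X$; and a contradiction argument with Rellich's lemma absorbs the interior $L^2(U)$-term using the triviality of $\ker d_A \cap L^{2,W}_1$ proved above. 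With this estimate $B$ is coercive, and Lax--Milgram produces a unique weak solution $u \in L^{2,W}_1$.

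Elliptic regularity for the second-order elliptic operator $\Delta_A^W$, bootstrapped exactly as in Lemma \ref{lemma: elliptic estimate for D_A}(ii), then upgrades $u$ from $L^{2,W}_1$ to $L^{2,W}_{k+1}$. Finally $b := c + d_A u \in L^{2,W}_k$ and $d_A^{*,W} b = d_A^{*,W} c + \Delta_A^W u = 0$ by construction. I expect the main obstacle to be the weighted Poincar\'e inequality, since for $\ad E$-valued 0-forms one cannot directly invoke the 1-form operator $D^{\pm\alpha}$ of Section \ref{subsection: preliminary results over Y^hat}; one must instead prove the scalar statement and then transfer it to $d_A$ using that $A$ agrees with the adapted connection $A_0$ outside a compact set modulo an $L^{2,W}_3$-decaying perturbation, which makes the zeroth-order correction $[a,\cdot]$ negligible in the relevant estimates.
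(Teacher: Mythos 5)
Your proposal is correct, but it takes a genuinely different route from the paper. The paper obtains surjectivity of $(u,b)\mapsto -d_Au+b$ as an almost immediate corollary of the decomposition $L^{2,W}_k(X,\Lambda^1(\ad E))=D_A'(L^{2,W}_{k+1})\oplus\mathrm{Ker}\,D_A$ already established in Lemma \ref{lemma: Hodge decomposition of Lambda^0 Lambda^+}(iii): writing $c=-d_Au+d_A^{*,W}(1+\mu^*)\eta+\beta$, it checks that the last two terms lie in $\mathrm{Ker}\,d_A^{*,W}$, and this is exactly where the $\mu$-ASD hypothesis enters, since $d_A^{*,W}d_A^{*,W}(1+\mu^*)\eta$ reduces to a contraction of $[(F_A^+-\mu(F_A^-))\wedge\eta]$. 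You instead solve the second-order equation $d_A^{*,W}d_Au=-d_A^{*,W}c$ by Lax--Milgram, which bypasses the $D_A'$-machinery entirely (and, as a bonus, never uses that $A$ is $\mu$-ASD, so your argument proves the statement for arbitrary $A\in\mathcal{A}$). The price is the weighted Poincar\'e inequality $\norm{u}_{L^{2,W}_1}\leq\const\norm{d_Au}_{L^{2,W}}$ for $\ad E$-valued $0$-forms, which is nowhere in the paper; you correctly identify this as the main obstacle and sketch a viable proof. In fact it is easier than the $1$-form estimates of Section \ref{section: preliminaries for linear theory}: after conjugating by $e^{\alpha t}$ on an end, the zero mode on the $S^3$ cross-section is governed by $\partial_t\mp\alpha$, and the elementary identity $\int|c'-\alpha c|^2=\int|c'|^2+\alpha^2\int|c|^2$ for compactly supported $c$ already gives coercivity on the cylinder, with the spectral gap of $\Delta_{S^3}$ handling the nonzero modes; patching, Rellich, and the triviality of $\mathrm{Ker}\,d_A\cap L^{2,W}$ finish it exactly as you describe. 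Your injectivity argument is essentially the paper's (the paper pairs $b$ with $d_Au$ directly rather than passing through $\Delta_A^Wu=0$, but both rest on the same cutoff integration by parts). In short: the paper's proof is shorter given the machinery of Section \ref{section: linear theory}, while yours is more self-contained and slightly more general.
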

\begin{proof}
The above (\ref{eq: linearlization of Coulomb gauge condition}) is a bounded linear map.
If $(u,b)\in L^{2,W}_{k+1}\oplus (\mathrm{Ker}d_A^{*,W}\cap L^{2,W}_k)$ satisfies $-d_Au+b=0$, then 
$\norm{b}_{L^{2,W}}^2=\langle b,d_Au\rangle_{L^{2,W}}=0$ by $d_A^{*,W}b=0$
Hence $b=d_Au=0$.
$d_Au=0$ implies that $|u|$ is constant. But $u\in L^{2,W}$. Hence $u=0$.
Therefore the map (\ref{eq: linearlization of Coulomb gauge condition}) is injective. 

Let $b\in L^{2,W}_k(X,\Lambda^1(\ad E))$.
By Lemma \ref{lemma: Hodge decomposition of Lambda^0 Lambda^+} (iii),
there exists $(u, \eta)\in L^{2,W}_{k+1}(X,(\Lambda^0\oplus\Lambda^+)(\ad E))$ and $\beta\in \mathrm{Ker}D_A$
satisfying 
\[ b=D_A'(u,\eta)+\beta = -d_A u + d_A^{*,W}(1+\mu^*)\eta+\beta .\]
Since $D_A\beta=0$, we have $d_A^{*,W}\beta=0$.
Since $A$ is $\mu$-ASD, we have $d_A^{*,W} d_A^{*,W}(1+\mu^*)\eta=0$.
Thus $d_A^{*,W}(d_A^{*,W}(1+\mu^*)\eta+\beta)=0$.
This argument shows that the map (\ref{eq: linearlization of Coulomb gauge condition}) is surjective and hence isomorphic.
\end{proof}
\begin{proposition} \label{prop: crucial infinite dimensionality of the obstruction space}
Suppose $b_+(Y)\geq 1$.
For any $A=A_0+a\in \mathcal{A}$, the space $\mathrm{Ker}D'_A\subset L_4^{2,W}(X, (\Lambda^0\oplus\Lambda^+)(\ad E))$ is 
infinite dimensional.
\end{proposition}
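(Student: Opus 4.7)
The plan is to argue by contradiction: suppose $N := \dim \ker D_A'<\infty$, and produce, for any $\varepsilon>0$, a unit-norm element $\rho\in L^{2,W}_1$ whose $L^{2,W}$-orthogonal projection onto $\ker D_A'$ has norm $O(\varepsilon)$ and which satisfies $\|D_A'\rho\|_{L^{2,W}}=O(\varepsilon)$. Lemma \ref{lemma: Hodge decomposition of Lambda^0 Lambda^+}(ii) gives the estimate $\|\tau\|_{L^{2,W}_1}\leq C\|D_A'\tau\|_{L^{2,W}}$ for every $\tau$ in the $L^{2,W}$-orthogonal complement of $\ker D_A'$; letting $P$ denote this orthogonal projection, applying the estimate to $\tau=\rho-P\rho$ yields
\[
 \|\rho\|_{L^{2,W}}\leq \|P\rho\|_{L^{2,W}}+C\|D_A'\rho\|_{L^{2,W}}
\]
for every $\rho\in L^{2,W}_1$. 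Since the left side will be $1$ but the right side $O(\varepsilon)$, this produces the desired contradiction.

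For a given $\varepsilon>0$ I would choose a large compact set $K\subset X$ so that (i) $\supp\mu\cup\supp F(A_0)\subset K$ and $A_0$ is flat on $X\setminus K$, (ii) $X\setminus K$ is a disjoint union of two simply connected half-tubes, (iii) a fixed $L^{2,W}$-orthonormal basis $\{\rho_1,\dots,\rho_N\}$ of $\ker D_A'$ satisfies $\|\rho_i\|_{L^{2,W}(X\setminus K)}<\varepsilon/\sqrt{N}$ for every $i$, and (iv) $\|a\|_{L^\infty(X\setminus K)}<\varepsilon$. Condition (iii) is automatic because the basis is finite and each $\rho_i$ lies in $L^{2,W}$. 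Condition (iv) uses that $a\in L^{2,W}_3$ implies $Wa\in L^2_3(X)$; the Sobolev embedding $L^2_3\hookrightarrow C^0$ on the bounded-geometry manifold $X$ moreover gives $|Wa(x)|\to 0$ at infinity, and $W\geq 1$ transfers this pointwise decay to $a$ itself. By (i) and (ii) I then choose a smooth $A_0$-parallel unit section $\sigma$ of $\ad E$ on $X\setminus K$, constructed independently on each of the two simply connected half-tubes.

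Now I apply Lemma \ref{lemma: techniacl lemma for infinite dimensionality of harmonic self-dual forms} with $U:=\mathrm{int}(K)$ to obtain a scalar $\eta\in L^{2,W}_1(X,\Lambda^+)$ with $\|\eta\|_{L^{2,W}}=1$, $\eta\equiv 0$ on $U$, and $\|d^{*,W}\eta\|_{L^{2,W}}<\varepsilon$. Setting $\rho:=(0,\eta\sigma)$, which is globally defined and smooth because $\eta$ vanishes on a neighborhood of $K$, the formula expressing $D_A'$ in terms of $D_{A_0}'$ combined with $\mu=\mu^*=0$ and the $A_0$-parallelism of $\sigma$ on $X\setminus K$ gives there
\[
 D_A'\rho=d^{*,W}\eta\cdot\sigma-*[a\wedge\eta\sigma].
\]
The pointwise bound $|a|<\varepsilon$ on $X\setminus K$ controls the second term by $C_1\varepsilon$ in $L^{2,W}$, so $\|D_A'\rho\|_{L^{2,W}}\leq(1+C_1)\varepsilon$. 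Since $\supp\rho\subset X\setminus K$, each inner product satisfies $|\langle\rho,\rho_i\rangle_{L^{2,W}}|\leq\|\rho_i\|_{L^{2,W}(X\setminus K)}<\varepsilon/\sqrt{N}$, giving $\|P\rho\|_{L^{2,W}}<\varepsilon$. Substituting into the displayed inequality of the first paragraph forces $1\leq(1+C(1+C_1))\varepsilon$, which fails for $\varepsilon$ small enough.

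The main technical difficulty will be justifying (iv): $a$ lies only in a weighted Sobolev space, and the global multiplication rule used earlier is too weak for a small local bound. The localization has to route through $Wa\in L^2_3$, Sobolev decay at infinity for $L^2_3$ functions on a bounded-geometry manifold, and the pointwise inequality $|a|\leq|Wa|$ from $W\geq 1$. A secondary care point is that $\sigma$ exists only on $X\setminus K$, but the vanishing of $\eta$ on a neighborhood of $K$ ensures $\eta\sigma$ extends smoothly by zero to all of $X$.
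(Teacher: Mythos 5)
Your proposal is correct and follows essentially the same route as the paper: argue by contradiction using the a priori estimate derived from Lemma \ref{lemma: Hodge decomposition of Lambda^0 Lambda^+} (ii), then contradict it with a test form obtained from Lemma \ref{lemma: techniacl lemma for infinite dimensionality of harmonic self-dual forms} supported outside a large compact set where $\mu$, $F_{A_0}$ and $a$ are negligible. You in fact supply two details the paper glosses over --- the $A_0$-parallel section $\sigma$ needed to promote the scalar $\eta$ to an $\ad E$-valued form, and the pointwise decay of $a$ at infinity via $Wa\in L^2_3$ --- and your direct control of $\|P\rho\|$ via the finite orthonormal basis is an equivalent substitute for the paper's detecting set $V$.
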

\begin{proof}
Suppose that $\mathrm{Ker} D'_A$ is finite dimensional.
Then there is a pre-compact open set $V\subset X$ such that 
for any non-zero $\rho\in \mathrm{Ker}D_A'$ we have $\rho|_V\not\equiv 0$.
Then
\[ \norm{\rho}_{L^{2,W}(X)}\leq \const \norm{\rho}_{L^2(V)} \quad (\rho\in \mathrm{Ker}D_A').\]
We want to prove the following:
There exists a positive constant $C$ depending on $\mu,\alpha,T,A_0,A$ such that 
for any $\rho\in L^{2,W}_1(X,(\Lambda^0\oplus\Lambda^+)(\ad E))$
\begin{equation} \label{eq: the estimate implied by the finite dimensional assumption}
 \norm{\rho}_{L^{2,W}(X)}\leq C(\norm{\rho}_{L^{2}(V)} + \norm{D_A'\rho}_{L^{2,W}(X)}).
\end{equation}
Let $\rho\in L^{2,W}_1(X,(\Lambda^0\oplus\Lambda^+)(\ad E))$, and 
$\rho=\rho_0+\rho_1$ be a decomposition such that $\rho_0\in \mathrm{Ker}D_A'$ and that
$\rho_1\in L^{2,W}_1(X)$ is $L^{2,W}$-orthogonal to $\mathrm{Ker}D_A'$. 
Then 
\begin{equation}\label{eq: rho, rho_0 and rho_1}
 \begin{split}
 \norm{\rho}_{L^{2,W}(X)}&\leq \norm{\rho_0}_{L^{2,W}(X)}+\norm{\rho_1}_{L^{2,W}(X)}
   \leq \const \norm{\rho_0}_{L^{2}(V)}+\norm{\rho_1}_{L^{2,W}(X)},\\
  &\leq \const(\norm{\rho}_{L^{2}(V)} + \norm{\rho_1}_{L^{2}(V)}) + \norm{\rho_1}_{L^{2,W}(X)}\\
  &\leq \const\norm{\rho}_{L^{2}(V)} + (1+\const)\norm{\rho_1}_{L^{2,W}(X)}.
 \end{split}
\end{equation}
From Lemma \ref{lemma: Hodge decomposition of Lambda^0 Lambda^+} (ii) 
\[ \norm{\rho_1}_{L^{2,W}(X)}\leq \const\norm{D_A'\rho_1}_{L^{2,W}(X)}=\const\norm{D_A'\rho}_{L^{2,W}(X)}.\]
From this and the above (\ref{eq: rho, rho_0 and rho_1}), 
we get (\ref{eq: the estimate implied by the finite dimensional assumption}).

Take $\varepsilon>0$ satisfying $2\varepsilon C<1$ ($C$ is a constant in 
(\ref{eq: the estimate implied by the finite dimensional assumption})).
For this $\varepsilon$, we can choose a positive integer $R$ such that $V' := q^{-1}(-(R+1/2)T,(R+1/2)T)$ contains
$V\cup \supp(\mu)\cup \supp(F_{A_0})$ and that 
for any $\rho\in (\Omega^0\oplus\Omega^+)(\ad E)$ we have (see (\ref{eq: D'_A and D'_{A_0}}))
\[ |D_{A_0}'\rho(x)-D_A'\rho(x)|\leq \varepsilon |\rho(x)| \quad (x\in X\setminus V').\]
From Lemma \ref{lemma: techniacl lemma for infinite dimensionality of harmonic self-dual forms}, 
there is $\eta\in L^{2,W}_1(X, \Lambda^+(\ad E))$ such that 
$\eta=0$ over $V'$ and $\norm{D'_{A_0}\eta}_{L^{2,W}(X)} < \varepsilon \norm{\eta}_{L^{2,W}(X)}$.
(Here $D_{A_0}'\eta := D_{A_0}'(0,\eta)$.)
Then $\norm{D_A'\eta}_{L^{2,W}(X)} < 2\varepsilon\norm{\eta}_{L^{2,W}(X)}$.
But the above (\ref{eq: the estimate implied by the finite dimensional assumption}) implies 
\[ \norm{\eta}_{L^{2,W}(X)}\leq C\norm{D_A'\eta}_{L^{2,W}(X)} < 2C\varepsilon\norm{\eta}_{L^{2,W}(X)}.\]
Since we choose $2C\varepsilon<1$, this is a contradiction.
\end{proof}
Let $\mathrm{Ker}(d_A^{*,W}(1+\mu^*))$ be the space of $\eta\in L^{2,W}(X, \Lambda^+(\ad E))$ satisfying 
$d_A^{*,W}(1+\mu^*)\eta=0$ as a distribution.
If $\eta\in \mathrm{Ker}(d_A^{*,W}(1+\mu^*))$, then $D_A'(0, \eta)=0$.
Hence $\eta\in L^{2,W}_4(X)$ by Lemma \ref{lemma: elliptic estimate for D'_A}.
The space $\mathrm{Ker}(d_A^{*,W}(1+\mu^*))$
is closed in $L^{2,W}(X, \Lambda^+(\ad E))$, and hence 
it is closed in $L^{2,W}_k(X, \Lambda^+(\ad E))$ for all $0\leq k\leq 4$.
The following proposition is the conclusion of this section.
\begin{proposition} \label{prop: conclusion of the linear theory}
Suppose that $A\in \mathcal{A}$ is a $\mu$-ASD connection.

\noindent 
(i) Let $(u,\eta)\in L^{2,W}(X, (\Lambda^0\oplus \Lambda^+)(\ad E))$.
We have $D_A'(u,\eta)=0$ if and only if $u=0$ and $d_A^{*,W}(1+\mu^*)\eta=0$.
Hence $\mathrm{Ker}D_A' = \mathrm{Ker}(d_A^{*,W}(1+\mu^*))$.
Moreover if $b_+(Y)\geq 1$ then the space $\mathrm{Ker}(d_A^{*,W}(1+\mu^*))$ is infinite dimensional.

\noindent 
(ii) Let $0\leq k\leq 3$. 
Let $\mathrm{Ker} d_A^{*,W} \cap L^{2,W}_{k+1}$ be the space of $b\in L^{2,W}_{k+1}(X, \Lambda^1(\ad E))$
satisfying $d_A^{*,W}b=0$.
Then the space $(d_A^+-\mu d_A^-)(\mathrm{Ker}d_A^{*,W} \cap L^{2,W}_{k+1})$ is closed in $L^{2,W}_k(X, \Lambda^+(\ad E))$, 
and we have the following $L^{2,W}$-orthogonal decomposition:
\begin{equation} \label{eq: the conclusion: Hodge decomposition for Omega^+(ad E)}
 L^{2,W}_k(X, \Lambda^+(\ad E)) = \mathrm{Ker}(d_A^{*,W}(1+\mu^*))\oplus 
  (d_A^+-\mu d_A^-)(\mathrm{Ker}d_A^{*,W} \cap L^{2,W}_{k+1}).
\end{equation}
\end{proposition}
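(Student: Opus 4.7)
The plan is to derive (i) from the Weitzenb\"ock-type identity $d_A^{*,W}d_A^{*,W}(1+\mu^*)=0$ on $\Omega^+(\ad E)$ (valid because $A$ is $\mu$-ASD), which is already invoked in the proof of Lemma~\ref{lemma: Coulom gauge}. Applying $d_A^{*,W}$ to the equation $D'_A(u,\eta)=-d_Au+d_A^{*,W}(1+\mu^*)\eta=0$ annihilates the second term and leaves $d_A^{*,W}d_Au=0$. Since $(u,\eta)\in L^{2,W}_4$ by Lemma~\ref{lemma: elliptic estimate for D'_A}, pairing with $u$ in the $W^2$-weighted inner product gives $\int_X W^2|d_Au|^2\,d\vol=0$, so $d_Au=0$. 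Then $|u|$ is constant on $X$; but the exponential growth of $W=e^{\alpha|q(x)|'}$ together with $u\in L^{2,W}$ forces $u\equiv 0$, and the original equation collapses to $d_A^{*,W}(1+\mu^*)\eta=0$. The converse is immediate, giving the identification $\mathrm{Ker}D'_A\cong \mathrm{Ker}(d_A^{*,W}(1+\mu^*))$ via $(0,\eta)\leftrightarrow\eta$; infinite dimensionality under $b_+(Y)\geq 1$ then follows from Proposition~\ref{prop: crucial infinite dimensionality of the obstruction space}.

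\textbf{Part (ii): the decomposition.} Given $\eta\in L^{2,W}_k(\Lambda^+(\ad E))$, apply Lemma~\ref{lemma: Hodge decomposition of Lambda^0 Lambda^+}(i) to $(0,\eta)$ to produce $b\in L^{2,W}_{k+1}(\Lambda^1(\ad E))$ and an element of $\mathrm{Ker}D'_A$ which, by part (i), has the form $(0,\xi)$ with $\xi\in \mathrm{Ker}(d_A^{*,W}(1+\mu^*))$. Apply the Coulomb gauge isomorphism (Lemma~\ref{lemma: Coulom gauge}) to write $b=-d_Au+b'$ with $b'\in \mathrm{Ker}\,d_A^{*,W}\cap L^{2,W}_{k+1}$. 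The dual identity $(d_A^+-\mu d_A^-)(d_A u)=(F_A^+-\mu F_A^-)\cdot u=0$ (again using that $A$ is $\mu$-ASD) yields
\[ D_A(-d_Au)=(d_A^{*,W}d_Au,\,0),\qquad D_Ab'=(0,\,(d_A^+-\mu d_A^-)b'). \]
Comparing the two components of $(0,\eta)=D_Ab+(0,\xi)$, the $\Omega^0$-entry forces $d_A^{*,W}d_Au=0$, hence $u=0$ by the argument of part (i); the $\Omega^+$-entry then gives the desired $\eta=(d_A^+-\mu d_A^-)b'+\xi$.

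\textbf{Orthogonality and closedness.} $L^{2,W}$-orthogonality of the two summands is inherited from Lemma~\ref{lemma: Hodge decomposition of Lambda^0 Lambda^+}(i), since $(0,(d_A^+-\mu d_A^-)b')=D_Ab'$ lies in $\mathrm{image}(D_A)$ while $(0,\xi)\in\mathrm{Ker}D'_A$. For closedness of $(d_A^+-\mu d_A^-)(\mathrm{Ker}\,d_A^{*,W}\cap L^{2,W}_{k+1})$ in $L^{2,W}_k(\Lambda^+(\ad E))$: if $(d_A^+-\mu d_A^-)b'_n\to\eta$ in $L^{2,W}_k$, then $D_Ab'_n=(0,(d_A^+-\mu d_A^-)b'_n)\to(0,\eta)$, and closedness of $\mathrm{image}(D_A)$ in Lemma~\ref{lemma: Hodge decomposition of Lambda^0 Lambda^+}(i) produces some $b\in L^{2,W}_{k+1}$ with $(0,\eta)=D_Ab$; the Coulomb-gauge computation above then rewrites $\eta$ in the required form.

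\textbf{Main obstacle.} The technical heart is the pair of Weitzenb\"ock-type identities $d_A^{*,W}d_A^{*,W}(1+\mu^*)=0$ on $\Omega^+(\ad E)$ and $(d_A^+-\mu d_A^-)d_A=0$ on $\Omega^0(\ad E)$, both of which rely on combining the Bianchi identity $d_AF_A=0$ with the $\mu$-ASD condition $F_A^+=\mu F_A^-$. Once these are granted, the entire proof reduces to bookkeeping with Lemmas~\ref{lemma: Hodge decomposition of Lambda^0 Lambda^+} and~\ref{lemma: Coulom gauge}. A secondary nuisance is regularity tracking in the Coulomb gauge at the top index $k=3$ (where Lemma~\ref{lemma: Coulom gauge} is applied to $b\in L^{2,W}_4$), which is handled by elliptic bootstrapping on the slice $\mathrm{Ker}\,d_A^{*,W}$.
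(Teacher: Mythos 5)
Your proof is correct and follows essentially the same route as the paper: both parts rest on Lemma \ref{lemma: Hodge decomposition of Lambda^0 Lambda^+} together with the $\mu$-ASD identities $d_A^{*,W}d_A^{*,W}(1+\mu^*)=0$ and $(d_A^+-\mu d_A^-)d_A=0$. The only cosmetic differences are that in (i) the paper simply notes that $d_Au$ and $d_A^{*,W}(1+\mu^*)\eta$ are $L^{2,W}$-orthogonal (rather than applying $d_A^{*,W}$ and integrating by parts), and in (ii) the paper reads off $d_A^{*,W}b=0$ directly from the vanishing of the $\Lambda^0$-component of $D_Ab=(0,\eta)$, so your extra Coulomb-gauge step is not needed.
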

\begin{proof}
(i) Suppose $D_A'(u,\eta) = -d_A u + d_A^{*,W}(1+\mu^*)\eta =0$. 
Then $(u,\eta)\in L^{2,W}_4(X)$ by Lemma \ref{lemma: elliptic estimate for D'_A}.
Since $A$ is $\mu$-ASD,
$d_Au$ and $d_A^{*,W}(1+\mu^*)\eta$ are $L^{2,W}$-orthogonal to each other.
Hence $d_Au=d_A^{*,W}(1+\mu^*)\eta=0$.
Then $u=0$ and $d_A^{*,W}(1+\mu^*)\eta=0$.
Therefore $\mathrm{Ker}D_A' = \mathrm{Ker}(d_A^{*,W}(1+\mu^*))$.
If $b_+(Y)\geq 1$, then $\mathrm{Ker}(d_A^{*,W}(1+\mu^*)) = \mathrm{Ker}D_A'$ is infinite dimensional 
by Proposition \ref{prop: crucial infinite dimensionality of the obstruction space}.

(ii) By Lemma \ref{lemma: Hodge decomposition of Lambda^0 Lambda^+} (i),
$\eta\in L^{2,W}_k(X,\Lambda^+(\ad E))$ is $L^{2,W}$-orthogonal to $\mathrm{Ker}(d_A^{*,W}(1+\mu^*))$
if and only if there exists $b\in L^{2,W}_{k+1}(X, \Lambda^1(\ad E))$ satisfying 
$(0, \eta)=D_Ab$ (i.e. $d_A^{*,W}b=0$ and $(d_A^+-\mu d_A^-)b=\eta$).
This shows that $(d_A^+-\mu d_A^-)(\mathrm{Ker}d_A^{*,W}\cap L^{2,W}_{k+1})$ is closed in $L^{2,W}_k(X, \Lambda^+(\ad E))$
and that we have the decomposition (\ref{eq: the conclusion: Hodge decomposition for Omega^+(ad E)})
(the factors of the decomposition are $L^{2,W}$-orthogonal to each other). 
\end{proof}

\section{Non-existence of reducible instantons} \label{section: non-existence of reducible instantons}
\begin{lemma} \label{lemma: quantitative version of H^2 of the tube is zero}
Let $I\subset \mathbb{R}$ be an open interval.
Let $\omega$ be a smooth anti-self-dual $2$-form on $I\times S^3$ satisfying $d\omega=0$.
Then there exists a smooth $1$-form $a$ on $I\times S^3$ satisfying $da=\omega$ and 
$\norm{a}_{L^2(I\times S^3)} \leq (1/\sqrt{8})\norm{\omega}_{L^2(I\times S^3)}$.
\end{lemma}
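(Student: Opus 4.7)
The plan is to work in the ansatz where $a$ has no $dt$-component, and to solve the resulting equation pointwise in $t$ by Hodge theory on $S^3$.

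First I would use the anti-self-dual analogue of the identification $\Lambda^+\cong\Lambda^1_{S^3}$ recalled in Section \ref{subsection: preliminary estmates over the tube} to write
\[ \omega = \tfrac{1}{2}(dt\wedge\beta - *_3\beta), \]
for a unique smooth $t$-dependent 1-form $\beta(t,\cdot)$ on $S^3$. A pointwise check gives $|\omega|^2 = \tfrac{1}{2}|\beta|^2$, so $\norm{\omega}^2_{L^2(I\times S^3)} = \tfrac{1}{2}\int_I \norm{\beta(t)}^2_{L^2(S^3)}\,dt$. Splitting $d = dt\wedge\partial_t + d_3$ in $d\omega = 0$ yields two separate identities: $d_3 *_3\beta = 0$ (equivalently $d_3^*\beta = 0$, so $\beta(t)$ is coclosed on $S^3$ for each $t$), and $d_3\beta + \partial_t(*_3\beta) = 0$ (equivalently $\partial_t\beta = -*_3 d_3\beta$).

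Next, for every $t\in I$, I would define $g(t)$ to be the unique coclosed 1-form on $S^3$ solving
\[ d_3 g(t) = -\tfrac{1}{2}*_3\beta(t). \]
Existence follows from $d_3(*_3\beta(t)) = 0$ together with $H^2(S^3)=0$; uniqueness, after imposing $d_3^* g(t) = 0$, follows from $H^1(S^3)=0$. Since $\beta \mapsto g$ is given by a bounded Hodge-theoretic operator on $S^3$, $g(t)$ is smooth in $t$. Set $a := g$, viewed as a 1-form on $I\times S^3$ with no $dt$-component. Then $da = dt\wedge\partial_t g + d_3 g$, and the $S^3$-part equals $-\tfrac{1}{2}*_3\beta$ by construction. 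For the $dt$-wedge part, $\partial_t g - \tfrac{1}{2}\beta$ is coclosed (from $d_3^* g\equiv 0$ and $d_3^*\beta\equiv 0$) and closed: differentiating the defining equation for $g$ in $t$ gives $d_3\partial_t g = -\tfrac{1}{2}*_3\partial_t\beta = \tfrac{1}{2}d_3\beta$ by the second identity above. Hence $\partial_t g - \tfrac{1}{2}\beta$ is harmonic on $S^3$, and by $H^1(S^3)=0$ it vanishes, so $da = \omega$.

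For the $L^2$ bound, I would apply Corollary \ref{cor: estimate relating to the first eigenvelue of Laplacian on 1-forms} (i) to the coclosed form $g(t)$:
\[ \norm{g(t)}^2_{L^2(S^3)} \leq \tfrac{1}{4}\norm{d_3 g(t)}^2_{L^2(S^3)} = \tfrac{1}{16}\norm{\beta(t)}^2_{L^2(S^3)}. \]
Integrating in $t$ and using $\norm{\omega}^2_{L^2} = \tfrac{1}{2}\int_I\norm{\beta(t)}^2 dt$ yields
\[ \norm{a}^2_{L^2(I\times S^3)} \leq \tfrac{1}{16}\int_I \norm{\beta(t)}^2\,dt = \tfrac{1}{8}\norm{\omega}^2_{L^2(I\times S^3)}, \]
which is the claimed estimate. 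The main obstacle is arranging that the time-derivative relation $\partial_t g = \beta/2$ is automatically forced by the Coulomb-gauge choice on $S^3$; this is exactly where the second identity extracted from $d\omega=0$, together with $H^1(S^3)=0$, does the crucial work.
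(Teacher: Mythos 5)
Your proof is correct and takes essentially the same route as the paper: both reduce $d\omega=0$ to the pair $d_3^*\beta=0$, $\partial_t\beta=-*_3d_3\beta$, construct the same coclosed $t$-dependent primitive on $S^3$ (your $g$ coincides with the paper's $-*_3d_3\Delta_3^{-1}\phi$ under $\beta=2\phi$), verify the $t$-derivative relation from the evolution equation and $H^1(S^3)=0$, and invoke Corollary \ref{cor: estimate relating to the first eigenvelue of Laplacian on 1-forms} (i) to get the constant $1/\sqrt{8}$. The only cosmetic differences are the factor-of-two normalization in the identification of ASD forms and that you produce the primitive via $H^2(S^3)=0$ rather than by the explicit inverse $*_3d_3\Delta_3^{-1}$.
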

\begin{proof}
Since $\omega$ is ASD, it can be written as:
\[ \omega = dt\wedge \phi -*_3\phi,\]
where $\phi\in \Gamma(I\times S^3,\Lambda^1_{S^3})$ (cf. Section \ref{subsection: preliminary estmates over the tube}).
Then $d\omega=0$ is equivalent to 
\[ \frac{\partial \phi}{\partial t}=-*_3d_3\phi, \quad d_3^*\phi =0.\]
Let $\mathrm{Ker} (d_3^*)\subset \Omega^1_{S^3}$ be the space of co-closed $1$-forms in $S^3$, and 
consider the operator $*_3d_3:\mathrm{Ker}(d_3^*)\to \mathrm{Ker}(d_3^*)$.
This is an isomorphism by $H^1_{dR}(S^3)=0$, and its inverse is given by 
$*_3d_3\Delta_3^{-1}:\mathrm{Ker}(d_3^*) \to \mathrm{Ker}(d_3^*)$. 
We set $a:= -*_3d_3\Delta_3^{-1}\phi\in \Gamma(I\times S^3,\Lambda^1_{S^3})$.
$a$ satisfies $d_3^*a=0$ and 
$*_3d_3a=-\phi$. Then
\[ *_3d_3\left(\frac{\partial a}{\partial t}\right) = -\frac{\partial \phi}{\partial t} = *_3d_3\phi.\]
Since $\partial a/\partial t$ and $\phi$ are both contained in $\mathrm{Ker}(d_3^*)$, 
we have $\partial a/\partial t= \phi$.
Then we have $da=\omega$.
Moreover (Corollary \ref{cor: estimate relating to the first eigenvelue of Laplacian on 1-forms} (i))
\[ \int_{\{t\}\times S^3}|\phi|^2d\vol_3 = \int_{\{t\}\times S^3}|d_3a|^2d\vol_3
    \geq 4 \int_{\{t\}\times S^3}|a|^2 d\vol_3.\]
Since $|\omega|^2 = 2|\phi|^2$, we get 
$\norm{\omega}_{L^2(I\times S^3)}\geq \sqrt{8}\norm{a}_{L^2(I\times S^3)}$.
\end{proof}
Let $\mu:\Lambda^-\to \Lambda^+$ be a compact-supported smooth bundle map satisfying 
$|\mu_x|<1$ for all $x\in X$. 
A $2$-form $\omega$ on $X$ is said to be $\mu$-ASD if it satisfies 
$\omega^+=\mu(\omega^-)$ where $\omega^+$ and $\omega^-$ are the self-dual and anti-self-dual 
parts of $\omega$ with respect to the periodic metric $g_0$.
$\omega$ is $\mu$-ASD if and only if $\omega$ is ASD with respect to the
conformal structure corresponding to $\mu$.
(See Corollary \ref{cor: description of conformal structures, manifold case}.)
\begin{proposition} \label{proposition: non-existence of closed L^2 asd forms}
Suppose $b_-(Y)=0$.
If $\omega$ is a smooth $\mu$-ASD $2$-form
on $X$ satisfying $d\omega=0$ and $\norm{\omega}_{L^2(X)}<\infty$,
then $\omega=0$.
(Indeed, if $\omega\in L^2(X,\Lambda^2)$ is $\mu$-ASD and satisfies $d\omega=0$ as a distribution, then 
$\omega$ is smooth by the elliptic regularity. Hence the assumption of the smoothness of $\omega$ can be weakened.)
\end{proposition}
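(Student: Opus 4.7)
The plan is to show $\int_X \omega \wedge \omega = 0$; combined with a pointwise inequality, this forces $\omega \equiv 0$. Writing $\omega = \omega^+ + \omega^-$ with respect to $g_0$, the $\mu$-ASD hypothesis reads $\omega^+ = \mu(\omega^-)$, and the assumption $|\mu_x|<1$ at every $x$ gives $\omega \wedge \omega = (|\omega^+|^2 - |\omega^-|^2)\,d\vol \le 0$ pointwise, with strict inequality wherever $\omega(x)\neq 0$. Thus $\int_X \omega \wedge \omega \le 0$, and it suffices to prove the reverse inequality $\int_X \omega \wedge \omega \ge 0$.

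To get the reverse inequality, I exhaust $X$ by compact pieces and mimic the closed $4$-manifold argument, using $b_-(Y)=0$. For large $s$, set $R_s := q^{-1}([-s,s])$ and compactify $R_s$ to a closed oriented $4$-manifold $\overline{R_s}$ by capping each of the two boundary copies of $S^3$ with a $4$-disc. Because $R_s$ is glued from finitely many of the pieces $Y^{(n)}_T$ described in Section~\ref{subsection: construction}, the compactification $\overline{R_s}$ is diffeomorphic to a connected sum of finitely many copies of $Y$; consequently $b_-(\overline{R_s})=0$ and the intersection form on $H^2(\overline{R_s};\mathbb{R})$ is positive definite.

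I next build a closed form $\tilde{\omega}_s$, compactly supported in $R_s$ and agreeing with $\omega$ on most of $R_s$, so that $\tilde{\omega}_s$ extends by $0$ to a smooth closed form on $\overline{R_s}$. By pigeonhole applied to the identity $\int_{\mathbb{R}} \|\omega\|^2_{L^2(q^{-1}(t))}\,dt = \|\omega\|_{L^2(X)}^2 < \infty$, I choose cut positions $s^*_\pm$ slightly inside $\pm s$ at which $\|\omega\|^2_{L^2(q^{-1}(s^*_\pm))}$ is arbitrarily small. For $s$ large enough that tubes around $q=s^*_\pm$ avoid $\supp(\mu)$, the form $\omega$ is closed and $g_0$-ASD there, so Lemma~\ref{lemma: quantitative version of H^2 of the tube is zero} yields primitives $a^\pm_s$ with $\omega = d a^\pm_s$ and $\|a^\pm_s\|_{L^2} \le \|\omega\|_{L^2}/\sqrt{8}$ on these tubes. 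With smooth cutoffs $\phi^\pm_s$ supported in the tubes and equal to $1$ near $q=\pm s$, set $\tilde{\omega}_s := \omega - d(\phi^+_s a^+_s) - d(\phi^-_s a^-_s)$: this is closed, equals $\omega$ on the core of $R_s$, and vanishes identically near $\partial R_s$.

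Positive definiteness of the intersection form on $\overline{R_s}$ gives $0 \le \int_{\overline{R_s}} \tilde{\omega}_s \wedge \tilde{\omega}_s$, and the final step is to show that this quantity tends to $\int_X \omega\wedge\omega$ as $s\to\infty$. The core contribution $\int_{\text{core}} \omega \wedge \omega$ converges to $\int_X \omega \wedge \omega$ by dominated convergence, using $|\omega\wedge\omega|\le |\omega|^2 \in L^1(X)$. A Stokes computation on the transition tubes reduces the remaining error to boundary integrals $\int_{q^{-1}(s^*_\pm)} a^\pm_s \wedge \omega$, bounded by a constant times $\|\omega\|^2_{L^2(q^{-1}(s^*_\pm))}$ and hence vanishing in the limit by the pigeonhole choice. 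Passing to the limit gives $\int_X \omega \wedge \omega \ge 0$, and combined with the pointwise bound this forces $\omega \equiv 0$. The main obstacle will be making the cap/Stokes identification precise, that is, extending $a^\pm_s$ across the $4$-disc caps so that $\tilde{\omega}_s$ is a smooth closed form on $\overline{R_s}$ and the integration by parts on the closed manifold is valid, together with the pointwise-in-$t$ slice bound $\|a^\pm_s\|_{L^2(q^{-1}(t))} \lesssim \|\omega\|_{L^2(q^{-1}(t))}$ needed to make the boundary terms small; this last bound follows from the explicit Hodge-theoretic formula $a = -{*_3}d_3\Delta_3^{-1}\phi$ used in the proof of Lemma~\ref{lemma: quantitative version of H^2 of the tube is zero}.
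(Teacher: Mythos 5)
Your proposal is correct and follows essentially the same route as the paper: use Lemma \ref{lemma: quantitative version of H^2 of the tube is zero} to find primitives of $\omega$ on the necks outside $\supp(\mu)$, cut off to obtain a compactly supported closed form that lives on a finite connected sum of copies of $Y$, invoke positive definiteness of its intersection form (from $b_-(Y)=0$), and compare the truncated self-intersection with $\int_X\omega\wedge\omega$. The only difference is in the bookkeeping of the error: you use a pigeonhole choice of slices plus a Stokes/boundary-term estimate (which does require the slice-wise bound $\norm{a}_{L^2(q^{-1}(t))}\leq \tfrac{1}{2}\norm{\phi}_{L^2(q^{-1}(t))}$ implicit in the proof of that lemma, as you note), whereas the paper more simply bounds $\norm{\omega'-\omega}_{L^2(X)}$ by a constant times $\norm{\omega}_{L^2(X\setminus U)}$ and applies Cauchy--Schwarz to the bilinear wedge pairing.
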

\begin{proof}
Suppose $\omega\neq 0$. We can assume $\norm{\omega}_{L^2(X)}=1$.
We have $\int_X \omega\wedge\omega = \int_X (|\mu(\omega^-)|^2 -|\omega^-|^2)d\vol <0$.
So we can take $\delta>0$ so that $\int_X\omega\wedge\omega < -\delta$.
Let $\varepsilon>0$ be a positive number satisfying 
\begin{equation} \label{eq: choice of varepsilon in the proof of non-existence of closed L^2 asd forms}
(2+\varepsilon)\varepsilon \leq \delta/2.
\end{equation}
Let $N>0$ be a large integer such that $U:=q^{-1}(-NT, NT)$ satisfies $U\supset \supp (\mu)$ and
\begin{equation} \label{eq: norm of omega over X setminus U}
 \frac{2T-3}{T-2}\norm{\omega}_{L^2(X\setminus U)}\leq \varepsilon.
\end{equation}
(Recall $T>2$.)
Set $V := q^{-1}(-(N+1)T+1, -NT-1)\sqcup q^{-1}(NT+1, (N+1)T-1)$.
$V$ is isometric to the disjoint union of the two copies of $(1,T-1)\times S^3$, and we have $V\subset X\setminus U$.
From Lemma \ref{lemma: quantitative version of H^2 of the tube is zero}, there exists 
a $1$-form $a$ on $V$ satisfying $da=\omega$ and 
$\norm{a}_{L^2(V)}\leq (1/\sqrt{8})\norm{\omega}_{L^2(V)}$.
Let $\beta$ be a smooth function on $X$ such that $0\leq \beta\leq 1$, 
$\supp (d\beta)\subset V$, $\beta=0$ over $|t|\geq (N+1)T-1$, $\beta=1$ over $|t|\leq NT+1$
and $|d\beta|\leq 2/(T-2)$. (Here $t=q(x)$.)
We define a compact-supported $2$-form $\omega'$ by 
\begin{equation*}
\omega' := \begin{cases}
            \omega \quad &\text{on $|t|\leq NT+1$}\\
            d(\beta a)\quad &\text{on $V$}\\
            0 \quad &\text{on $|t|\geq (N+1)T-1$}.
           \end{cases}
\end{equation*}
$\omega'$ is a closed $2$-form ($d\omega'=0$).
\begin{equation*}
 \norm{d(\beta a)}_{L^2(V)}\leq \frac{2}{T-2}\norm{a}_{L^2(V)} + \norm{\omega}_{L^2(V)}
                           \leq \frac{1}{T-2}\norm{\omega}_{L^2(V)} + \norm{\omega}_{L^2(V)}
                           \leq \frac{T-1}{T-2}\norm{\omega}_{L^2(V)}.
\end{equation*}
Then, by (\ref{eq: norm of omega over X setminus U}), 
$\norm{\omega'-\omega}_{L^2(X)}\leq \frac{2T-3}{T-2}\norm{\omega}_{L^2(X\setminus U)} \leq \varepsilon$ and 
$\norm{\omega'}_{L^2(X)}\leq 1+\varepsilon$.
\[ \left|\int_X \omega\wedge\omega -\int_X\omega'\wedge\omega'\right| 
   \leq (\norm{\omega}_{L^2(X)}+\norm{\omega'}_{L^2(X)})\norm{\omega-\omega'}_{L^2(X)}
   \leq (2+\varepsilon)\varepsilon \leq \delta/2.\]
Here we have used (\ref{eq: choice of varepsilon in the proof of non-existence of closed L^2 asd forms}).
Since we have $\int_X\omega\wedge\omega < -\delta$,
\[ \int_X \omega'\wedge \omega' \leq -\delta/2.\]
On the other hand, since $\omega'$ is closed and compact-supported 
($\supp (\omega') \subset q^{-1}(-(N+1)T+1, (N+1)T-1)$), $\omega'$ can be 
considered as a closed $2$-form defined on $Y^{\sharp (2N+1)}$ (the connected sum of the
$(2N+1)$-copies of $Y$).
Since $b_-(Y)=0$, the intersection form of $Y^{\sharp (2N+1)}$ is positive definite.
Hence 
\[ 0\leq  \int_{Y^{\sharp (2N+1)}} \omega'\wedge\omega' = \int_X \omega'\wedge \omega' \leq -\delta/2.\]
Here $\delta$ is positive. This is a contradiction.
\end{proof}
Recall that $E=X\times SU(2)$ is the product principal $SU(2)$-bundle over $X$.
\begin{corollary} \label{corollary: non-existence of reducible instantons}
Suppose $b_-(Y)=0$.
If $A$ is a reducible $\mu$-ASD connection on $E$ satisfying 
\[ \int_X |F_A|^2d\vol < +\infty  , \]
then $A$ is flat.
\end{corollary}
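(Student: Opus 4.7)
The plan is to reduce the statement to Proposition \ref{proposition: non-existence of closed L^2 asd forms} by splitting off the $SU(2)$-direction in $F_A$.

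The reducibility of $A$ on the trivial bundle $E=X\times SU(2)$ means the stabilizer of $A$ in the gauge group strictly contains the center. Since $X$ is connected, this produces a smooth section $s$ of $\ad E$ with $d_A s=0$ and $|s|$ constant; normalizing, I may assume $|s|\equiv 1$. From $d_A s = 0$ and the standard identity $d_A^2 s = [F_A,s]$, I obtain $[F_A,s]=0$ pointwise. Because $s(x)\in su(2)$ is a unit vector, its centralizer is the $1$-dimensional subalgebra $\mathbb{R}\cdot s(x)$. Therefore there is a unique real $2$-form $\omega$ on $X$ such that
\[ F_A = \omega\otimes s. \]

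Next I verify the three hypotheses of Proposition \ref{proposition: non-existence of closed L^2 asd forms} for $\omega$. Since $d_A s=0$, the formula $d_A(\omega\otimes s)=d\omega\otimes s$ together with the Bianchi identity $d_A F_A=0$ gives $d\omega=0$. The $\mu$-ASD condition $F_A^+ = \mu(F_A^-)$ becomes $\omega^+\otimes s = \mu(\omega^-)\otimes s$, and cancelling $s$ (which is a nonzero pointwise factor taking values in a fixed copy of $\mathbb{R}$ at each point) shows $\omega^+=\mu(\omega^-)$, so $\omega$ is $\mu$-ASD. Finally, $|F_A|^2=|\omega|^2|s|^2=|\omega|^2$, so the finite-energy hypothesis $\int_X|F_A|^2d\vol<\infty$ gives $\omega\in L^2(X,\Lambda^2)$.

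Proposition \ref{proposition: non-existence of closed L^2 asd forms} (applied to the smooth form $\omega$) then forces $\omega\equiv 0$, hence $F_A=0$ and $A$ is flat. The only delicate point worth double-checking is the very first step: producing the globally defined parallel section $s$ from the abstract statement ``reducible''. Since $E$ is the product bundle and $X$ is simply-connected, there is no topological obstruction and one can simply take $s$ to be the infinitesimal generator of a one-parameter subgroup of the stabilizer; the rest of the argument is a direct application of the proposition already proved.
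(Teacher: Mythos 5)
Your proposal is correct and is exactly the reduction the paper intends: the corollary is stated without proof immediately after Proposition \ref{proposition: non-existence of closed L^2 asd forms}, the implicit argument being precisely the splitting $F_A=\omega\otimes s$ via a parallel unit section $s$ of $\ad E$ coming from reducibility, after which $\omega$ is a smooth closed $\mu$-ASD $2$-form in $L^2$ and the proposition forces $\omega=0$. All the verifications you carry out (Bianchi gives $d\omega=0$, cancelling $s$ in the $\mu$-ASD condition, $|F_A|=|\omega|$) are the right ones, so there is nothing to add.
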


\section{Moduli theory} \label{section: moduli theory}
\subsection{Sard-Smale's theorem} \label{subsection: Sard-Smale's theorem}
In this subsection we review a variant of Sard-Smale's theorem \cite{Smale} which will be used later.
Let $M_1$, $M_2$, $M_3$ be Banach manifolds. 
We assume that they are all second countable.
Let $f:M_1\times M_2\to M_3$ be a $\mathcal{C}^\infty$-map.
Let $(x_0, y_0)\in M_1\times M_2$ and set $z_0 := f(x_0,y_0)\in M_3$.
Suppose that the following two conditions hold.

\noindent
(i) The derivative $df_{(x_0,y_0)}:T_{x_0}M_1\oplus T_{y_0}M_2\to T_{z_0}M_3$ is surjective.

\noindent 
(ii) The partial derivative $d_1f_{(x_0,y_0)}:T_{x_0}M_1 \to T_{z_0}M_3$ with respect to $M_1$-direction
 is a Fredholm operator with 
$\dim\mathrm{Ker} (d_1f_{(x_0,y_0)}) < \dim \mathrm{Coker} (d_1f_{(x_0,y_0)})$.

Under these conditions we want to prove the following proposition.
(Recall that a subset of a topological space is said to be of first category if it is a 
countable union of nowhere-dense subsets.) 
\begin{proposition} \label{prop: a version of Sard-Smale's theorem}
There exists an open neighborhood $U\times U'\subset M_1\times M_2$ of $(x_0,y_0)$ such that 
the set $\{y\in U'|\, \exists x\in U: f(x,y) =z_0\}$ is of first category in $M_2$.
\end{proposition}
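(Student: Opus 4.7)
The plan is to realize the set $\{y \in U' \mid \exists\, x \in U: f(x,y)=z_0\}$ as the image of a Fredholm map of strictly negative index, and then to apply the Sard--Smale theorem \cite{Smale}. Write $L_i := d_i f_{(x_0,y_0)}$ for $i=1,2$.

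First I would use conditions (i) and (ii) to verify the hypotheses of the Banach implicit function theorem at $(x_0,y_0)$. Since $L_1$ is Fredholm, $\mathrm{image}(L_1)\subset T_{z_0}M_3$ is closed with a finite dimensional complement $C$, and the surjectivity of $L_1\oplus L_2$ forces $L_2$ to surject onto $C$ modulo $\mathrm{image}(L_1)$; lifting a basis of $C$ through $L_2$ to a finite dimensional subspace $V\subset T_{y_0}M_2$ and setting $W := L_2^{-1}(\mathrm{image}(L_1))$ gives a topological direct sum $T_{y_0}M_2 = V\oplus W$. Combined with the splitting $T_{x_0}M_1 = \ker L_1 \oplus X_1$ provided by the Fredholm property of $L_1$, this exhibits $\ker df_{(x_0,y_0)}$ as a closed, complemented subspace of $T_{x_0}M_1\oplus T_{y_0}M_2$. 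The implicit function theorem then produces an open neighborhood $U\times U'\subset M_1\times M_2$ of $(x_0,y_0)$ such that $N := f^{-1}(z_0)\cap (U\times U')$ is a smooth second countable Banach submanifold with $T_{(x_0,y_0)}N = \ker df_{(x_0,y_0)}$.

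Next I would analyze the projection $\pi : N \to M_2$, $(x,y)\mapsto y$. Its derivative at $(x_0,y_0)$ is the restriction of the coordinate projection to $\ker df_{(x_0,y_0)}$, which has kernel $\ker L_1\times\{0\}$ (finite dimensional) and image $\{y \in T_{y_0}M_2 \mid L_2 y \in \mathrm{image}(L_1)\}$, whose codimension equals $\dim\mathrm{coker}(L_1)$ by the surjectivity of $df_{(x_0,y_0)}$. Hence $d\pi$ is Fredholm of index
\[ \mathrm{ind}(\pi) = \dim\ker L_1 - \dim\mathrm{coker}\,L_1 < 0, \]
and after shrinking $U,U'$ the same conclusion holds at every point of $N$.

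Finally, since $N$ is second countable and $\pi$ is a smooth Fredholm map, the Sard--Smale theorem asserts that the set of critical values of $\pi$ is of first category in $M_2$. Because $\mathrm{ind}(\pi) < 0$, the derivative $d\pi$ has cokernel of dimension at least $|\mathrm{ind}(\pi)|>0$ at every point of $N$ and is therefore never surjective, so $\pi$ has \emph{no} regular values and every element of $\pi(N) = \{y\in U'\mid\exists\, x\in U: f(x,y)=z_0\}$ is automatically a critical value. Thus this set is of first category in $M_2$, as required. The genuine technical point is the splitting claim in the first step; once $f^{-1}(z_0)$ is realised in its standard Banach submanifold form, the remainder is bookkeeping plus a direct appeal to Sard--Smale.
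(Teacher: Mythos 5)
Your proof is correct and follows essentially the same route as the paper: use the Fredholm property of $d_1f$ together with the surjectivity of $df$ to split $\ker df_{(x_0,y_0)}$ (the paper packages this as the construction of a bounded right inverse $Q$, which is equivalent to your explicit complement $X_1\oplus V$), apply the implicit function theorem to realize $f^{-1}(z_0)$ locally as a second countable Banach submanifold, identify the kernel and cokernel of $d\pi$ with those of $d_1f$ so that $\pi$ is Fredholm of negative index, and conclude via Sard--Smale that $\pi$ has no regular values in its image, whence $\pi(N)$ is of first category. No gaps.
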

I believe that this is a standard result. But for the completeness of the argument we will give its brief proof below.
\begin{lemma} \label{lemma: the existence of right inverse of df_{(x_0,y_0)}}
There is a bounded linear map $Q: T_{z_0}M_3 \to T_{x_0}M_1\oplus T_{y_0}M_2$ which is a right inverse of $df_{(x_0,y_0)}$, i.e.
$df_{(x_0,y_0)}\circ Q=1$.
\end{lemma}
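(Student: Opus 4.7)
The plan is to construct $Q$ piece by piece, decomposing the target according to the image of the partial derivative $d_1 f_{(x_0,y_0)}$. Write $L := df_{(x_0,y_0)}$ and $L_1 := d_1 f_{(x_0,y_0)}$. By condition (ii) the operator $L_1$ is Fredholm, so $\mathrm{Im}(L_1)$ is a closed subspace of $T_{z_0}M_3$ of finite codimension. Fix a (finite-dimensional) algebraic complement $N$ so that
\[
T_{z_0}M_3 = \mathrm{Im}(L_1)\oplus N,
\]
the splitting being automatically topological because $N$ is finite-dimensional. I will build $Q$ separately on each summand and add.

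On $\mathrm{Im}(L_1)$, the idea is to invert $L_1$ modulo its finite-dimensional kernel. Since $\ker L_1$ is finite-dimensional (hence closed), it admits a closed linear complement $K\subset T_{x_0}M_1$; this is standard (any continuous linear projection onto a finite-dimensional subspace exists in a Banach space). Then $L_1|_K \colon K \to \mathrm{Im}(L_1)$ is a continuous linear bijection between Banach spaces, so by the open mapping theorem its inverse $R := (L_1|_K)^{-1}$ is bounded. Define $Q$ on $\mathrm{Im}(L_1)$ as the composition $\mathrm{Im}(L_1)\xrightarrow{R} K \hookrightarrow T_{x_0}M_1 \hookrightarrow T_{x_0}M_1\oplus T_{y_0}M_2$, using the inclusion into the first factor.

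On $N$, I use surjectivity of $L$ from condition (i). Pick a basis $n_1,\dots,n_k$ of $N$ and vectors $w_1,\dots,w_k \in T_{x_0}M_1\oplus T_{y_0}M_2$ with $L(w_i)=n_i$; extend to a linear map $N \to T_{x_0}M_1\oplus T_{y_0}M_2$ by $n_i \mapsto w_i$, which is automatically bounded since $N$ is finite-dimensional. Adding the two partial maps yields a bounded linear $Q\colon T_{z_0}M_3 \to T_{x_0}M_1\oplus T_{y_0}M_2$ such that $L\circ Q$ equals the identity on each summand of $\mathrm{Im}(L_1)\oplus N$, hence on all of $T_{z_0}M_3$.

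There is no real obstacle here, only bookkeeping: the two ingredients are the existence of a closed complement of a finite-dimensional subspace (used twice, once for $N\subset T_{z_0}M_3$ and once for $\ker L_1\subset T_{x_0}M_1$) and the open mapping theorem. The Fredholm hypothesis is what makes both $\ker L_1$ and $\mathrm{coker}\,L_1$ finite-dimensional, which is exactly what allows this splitting construction to produce a \emph{bounded} right inverse rather than merely an algebraic one.
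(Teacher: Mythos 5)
Your proof is correct and follows essentially the same route as the paper's: split $T_{z_0}M_3=\mathrm{Im}(d_1f)\oplus(\text{finite-dimensional complement})$, invert $d_1f$ on a closed complement of its kernel via the open mapping theorem, and use surjectivity of the full derivative to handle the finite-dimensional piece. The extra justifications you supply (existence of closed complements, boundedness of the inverse) are the standard facts the paper leaves implicit.
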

\begin{proof}
Set $D:=d_1f_{(x_0,y_0)}:T_{x_0}M_1 \to T_{z_0}M_3$.
Since $D$ is Fredholm, we have decompositions: 
$T_{x_0}M_1 = \mathrm{Ker} D\oplus V$ and $T_{z_0}M_3 =\mathrm{Im} D\oplus W$ where 
$V$ and $W$ are closed subspaces and moreover $W$ is finite dimensional.
The restriction $D|_{V} :V\to \mathrm{Im}D$ is an isomorphism.
Since $df_{(x_0,y_0)}$ is surjective and $W$ is finite dimensional, 
there is a bound linear map $T:W\to T_{x_0}M_1 \oplus T_{y_0}M_2$ satisfying 
$df_{(x_0,y_0)}\circ T=1$.
Then the map 
\[ Q:T_{z_0}M_3 = \mathrm{Im} D\oplus W \to T_{x_0}M_1\oplus T_{y_0}M_2, \quad (u,v)\mapsto (D|_V)^{-1}(u) + T(v),\]
gives a right inverse of $df_{(x_0,y_0)}$.
\end{proof}
By the implicit function theorem, there is an open neighborhood $U\times U' \subset M_1\times M_2$ of $(x_0,y_0)$
such that  
\[ M := \{(x,y)\in U\times U'|\, f(x,y)=z_0\} \]
is a smooth submanifold of $M_1\times M_2$, and that for any $(x,y)\in U\times U'$ the derivative 
$df_{(x,y)}:T_x M_1\oplus T_y M_2\to T_{f(x,y)}M_3$ is surjective.
Let $\pi:M\to M_2$ be the natural projection.

The set of Fredholm operators is open in the space of bounded operators, and the index is locally constant on it.
Hence we can choose $U$ and $U'$ so small that for any $(x,y)\in U\times U'$ the map 
$d_1f_{(x,y)}:T_x M_1\to T_{f(x,y)}M_3$ is Fredholm and satisfies 
$\dim \mathrm{Ker}(d_1f_{(x,y)}) < \dim \mathrm{Coker}(d_1f_{(x,y)})$.
For $(x,y)\in M$ we have
\[ T_{(x,y)}M = \{(u,v)\in T_x M_1\oplus T_y M_2|\, d_1f_{(x,y)}u + d_2f_{(x,y)}v=0\}.\]
Then it is easy to see that $\pi:M\to M_2$ is a Fredholm map with 
$\mathrm{Ker}(d\pi_{(x,y)}) \cong \mathrm{Ker}(d_1f_{(x,y)})$ and 
$\mathrm{Coker}(d\pi_{(x,y)})\cong \mathrm{Coker}(d_1f_{(x,y)})$ for $(x,y)\in M$.
(The maps $\mathrm{Ker}(d_1f_{(x,y)})\ni u\mapsto (u,0)\in \mathrm{Ker}(d\pi_{(x,y)})$ and 
$\mathrm{Coker}(d\pi_{(x,y)})\ni [v]\mapsto [d_2f_{(x,y)}(v)]\in \mathrm{Coker}(d_1f_{(x,y)})$
give isomorphisms.)
In particular  
$\mathrm{Index} (d\pi_{(x,y)}) < 0$ for $(x,y)\in M$.
Then a point $y\in M_2$ is regular for $\pi$ if and only if $\pi^{-1}(y)$ is empty.
We apply Sard-Smale's theorem to the map $\pi$ and conclude that 
$\pi(M)$ is of first category in $M_2$.
This proves Proposition \ref{prop: a version of Sard-Smale's theorem}.

\subsection{Review of Floer's function space}\label{subsection: Review of Floer's functional space}
Here we review a function space introduced by Floer \cite{Floer}.
Let $\vec{\tau} = (\tau_0, \tau_1, \tau_2,\cdots)$ be a sequence of positive real numbers indexed by $\mathbb{Z}_{\geq 0}$.
(We will choose a special $\vec{\tau}$ below.)
Let $\mathcal{C}^\infty(\mathbb{R}^n)$ be the set of all $\mathcal{C}^\infty$-functions in $\mathbb{R}^n$.
(We will need only the case $n=4$.)
For $f\in \mathcal{C}^\infty(\mathbb{R}^n)$ we set $|\nabla^k f(x)| := \max_{|\alpha|=k}|\partial^\alpha f(x)|$ $(k\geq 0)$
where $\alpha = (\alpha_1, \cdots, \alpha_n) \in \mathbb{Z}_{\geq 0}^n$ and $|\alpha| := \alpha_1+\cdots+\alpha_n$.
We define the norm $\norm{f}_{\vec{\tau}}$ by 
\[ \norm{f}_{\vec{\tau}} := \sum_{k\geq 0} \tau_k \sup_{x\in \mathbb{R}^n}|\nabla^k f(x)|. \]
We define $\mathcal{C}^{\vec{\tau}}(\mathbb{R}^n)$ as the set of all $f\in \mathcal{C}^\infty(\mathbb{R}^n)$ 
satisfying $\norm{f}_{\vec{\tau}}<\infty$.
$(\mathcal{C}^{\vec{\tau}}(\mathbb{R}^n), \norm{\cdot}_{\vec{\tau}})$ becomes a Banach space.
For an open set $U\subset \mathbb{R}^n$ we define $\mathcal{C}^{\vec{\tau}}_0(U)$ as the space of all 
$f\in \mathcal{C}_0^{\vec{\tau}}(\mathbb{R}^n)$ satisfying $f(x)=0$ for all $x\in \mathbb{R}^n\setminus U$.
$\mathcal{C}^{\vec{\tau}}_0(U)$ is a closed subspace in $\mathcal{C}^{\vec{\tau}}(\mathbb{R}^n)$.
\begin{lemma}  \label{lemma: separability of Floer's function space}
For any bounded open set $U\subset \mathbb{R}^n$,
$\mathcal{C}_0^{\vec{\tau}}(U)$ is separable.
\end{lemma}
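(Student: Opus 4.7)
The plan is to realize $\mathcal{C}_0^{\vec{\tau}}(U)$ isometrically as a subspace of an explicitly separable Banach space; since every subspace of a separable metric space is separable, this suffices.

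For each $k \geq 0$ let $V_k$ denote the finite-dimensional target of $\nabla^k$, and let $X_k \subset C_b(\mathbb{R}^n; V_k)$ be the closed subspace of bounded continuous maps vanishing on $\mathbb{R}^n \setminus \overline{U}$. Because $\overline{U}$ is compact, restriction identifies $X_k$ with a closed subspace of $C(\overline{U}; V_k)$, which is separable (Stone--Weierstrass on a compact metric space). I would then form the weighted $\ell^1$-sum
\[
 Y := \Bigl\{(g_k)_{k \geq 0} : g_k \in X_k,\ \|(g_k)\|_Y := \sum_{k \geq 0} \tau_k \|g_k\|_\infty < \infty \Bigr\}.
\]
This is a Banach space, and picking countable dense sets $D_k \subset X_k$, the collection of finitely-supported sequences with entries in the $D_k$'s is a countable dense subset of $Y$: given $(g_k)\in Y$ and $\varepsilon>0$, choose $N$ with $\sum_{k>N}\tau_k\|g_k\|_\infty<\varepsilon/2$ and then approximate $g_0,\ldots,g_N$ individually.

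Finally, I would define $\Phi : \mathcal{C}_0^{\vec{\tau}}(U) \to Y$ by $\Phi(f) := (\nabla^k f)_{k \geq 0}$. Smoothness of $f$ together with its vanishing outside $U$ forces every $\nabla^k f$ to vanish on the open set $\mathbb{R}^n \setminus \overline{U}$, so $\nabla^k f \in X_k$. By construction $\|\Phi(f)\|_Y = \|f\|_{\vec{\tau}}$, so $\Phi$ is an isometric embedding and $\mathcal{C}_0^{\vec{\tau}}(U)$ inherits separability from $Y$. The only subtlety worth flagging is that the $\vec{\tau}$-topology is strictly stronger than the Fr\'echet $C^\infty$-topology, so one cannot directly apply Stone--Weierstrass or a mollifier density argument to $\mathcal{C}_0^{\vec{\tau}}(U)$ itself; routing through the weighted $\ell^1$-sum builds the weights $\tau_k$ into the target norm from the outset and reduces the question to separability one coordinate at a time, which is classical.
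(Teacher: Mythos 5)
Your proof is correct and follows essentially the same route as the paper: both embed $\mathcal{C}_0^{\vec{\tau}}(U)$ isometrically, via the sequence of all derivatives, into a separable weighted $\ell^1$-sum of spaces of continuous functions and then invoke the fact that subspaces of separable metric spaces are separable. The only cosmetic difference is that the paper indexes the components by multi-indices $\alpha$ and uses $\mathcal{C}_0(\mathbb{R}^n)$, while you group derivatives by order $k$ and use $C(\overline{U};V_k)$; this changes nothing of substance.
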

\begin{proof}
Let $\mathcal{C}_0(\mathbb{R}^n)$ be the Banach space of all continuous functions $f$ in $\mathbb{R}^n$ which vanish
at infinity (i.e. for any $\varepsilon>0$ there is a compact set $K\subset \mathbb{R}^n$
such that $|f(x)| \leq \varepsilon$ for all $x\in \mathbb{R}^n\setminus K$).
Let $B$ be the set of all sequences $\vec{f} = (f_\alpha)_{\alpha\in \mathbb{Z}_{\geq 0}^n}$ with 
$f_\alpha \in \mathcal{C}_0(\mathbb{R}^n)$ satisfying 
\[ \norm{\vec{f}}_B := \sum_{k\geq 0} \tau_k \max_{|\alpha|=k} \norm{f_\alpha}_{\mathcal{C}_0(\mathbb{R}^n)} 
    <\infty , \quad (\norm{f_{\alpha}}_{\mathcal{C}_0(\mathbb{R}^n)} := \sup_{x\in \mathbb{R}^n}|f_{\alpha}(x)|).\]
$(B,\norm{\cdot}_B)$ is a Banach space.
Since $\mathcal{C}_0(\mathbb{R}^n)$ is separable, $B$ is also separable.
The map 
\[ \mathcal{C}^{\vec{\tau}}_0(U) \to B, \quad f\mapsto (\partial^\alpha f)_{\alpha\in \mathbb{Z}_{\geq 0}^n}, \]
is an isometric embedding.
(Note that $\partial^\alpha f$ vanishes at infinity because $f=0$ outside $U$ and $U$ is bounded.)
Hence $\mathcal{C}^{\vec{\tau}}_0(U)$ is separable.
\end{proof}
Let $\beta:\mathbb{R}\to \mathbb{R}$ be a $\mathcal{C}^\infty$-function satisfying 
$\beta(x)=0$ for $x\leq 1/3$ and $\beta(x)=1$ for $x\geq 2/3$.
We define positive numbers $a_k$ $(k\geq 0)$ by setting 
\[ a_k := \max_{x\in \mathbb{R}} |\beta^{(k)}(x)| + a_{k-1}  \quad (a_{-1}:=0).\]
Here $\beta^{(k)}$ is the $k$-th derivative of $\beta$.
We set $\tau_k := (a_k^n k^k)^{-1}$ $(k\geq 1)$ and $\tau_0:=1$.

For $0<\delta<L$, we define a $\mathcal{C}^\infty$-function $\beta_{\delta, L}:\mathbb{R}\to \mathbb{R}$ by 
\begin{equation*}
 \beta_{\delta,L}(x) := \begin{cases}
                           \beta(\frac{x+L}{\delta})  &(x\leq 0) \\
                           \beta(\frac{-x+L}{\delta}) &(x\geq 0).
                        \end{cases}
\end{equation*}
$\beta_{\delta, L}$ approximates the characteristic function of the interval $[-L, L]$ as $\delta\to 0$.
We have 
\begin{equation} \label{eq: upper bound on beta_{delta,L}}
|\beta_{\delta, L}^{(k)}(x)|\leq \delta^{-k}a_k \quad (k\geq 0).
\end{equation}
Note that the right-hand-side is independent of $L$.
For $y\in \mathbb{R}^n$ and $0<\delta<L$ we set 
\[ f_{y,\delta,L}(x) := \prod_{i=1}^n\beta_{\delta,L}(x_i-y_i).\]
$f_{y,\delta,L}$ is supported in the open cube 
$K_{y,L} := (y_1-L,y_1+L)\times \cdots \times (y_n-L,y_n+L)$, and 
$\lim_{\delta\to 0}f_{y,\delta,L} = 1_{K_{y,L}}$ (the characteristic function of 
$K_{y,L}$) in $L^r(\mathbb{R}^n)$ for $1\leq r< \infty$.
\begin{lemma}  \label{lemma: bunmp functions are contained in C^tau}
$f_{y,\delta,L}$ is contained in $\mathcal{C}^{\vec{\tau}}_0(K_{y,L})$.
\end{lemma}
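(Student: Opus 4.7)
The plan is to estimate the derivatives of $f_{y,\delta,L}$ directly, using the product structure and the bound \eqref{eq: upper bound on beta_{delta,L}} on $\beta_{\delta,L}$, and then verify that the specific choice $\tau_k = (a_k^n k^k)^{-1}$ renders $\|f_{y,\delta,L}\|_{\vec{\tau}}$ summable.

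First I would compute $\partial^\alpha f_{y,\delta,L}$ for a multi-index $\alpha = (\alpha_1,\ldots,\alpha_n)$ with $|\alpha|=k$. Since $f_{y,\delta,L}$ is a product of one-variable functions,
\[
\partial^\alpha f_{y,\delta,L}(x) = \prod_{i=1}^n \beta_{\delta,L}^{(\alpha_i)}(x_i - y_i),
\]
so by \eqref{eq: upper bound on beta_{delta,L}} we get
\[
|\partial^\alpha f_{y,\delta,L}(x)| \le \prod_{i=1}^n \delta^{-\alpha_i} a_{\alpha_i} = \delta^{-k} \prod_{i=1}^n a_{\alpha_i}.
\]
Since the sequence $\{a_k\}$ is non-decreasing by construction and each $\alpha_i \le k$, we have $a_{\alpha_i} \le a_k$, and therefore $|\nabla^k f_{y,\delta,L}(x)| \le \delta^{-k} a_k^n$.

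Next, I would substitute into the definition of the norm. With $\tau_0=1$ and $\tau_k = (a_k^n k^k)^{-1}$ for $k \ge 1$, the factors of $a_k^n$ cancel, leaving
\[
\|f_{y,\delta,L}\|_{\vec{\tau}} \le 1 + \sum_{k \ge 1} \frac{\delta^{-k}}{k^k} = 1 + \sum_{k \ge 1} \frac{1}{(k\delta)^k}.
\]
To see the series converges, fix any integer $K_0 \ge 2/\delta$; then for $k \ge K_0$ one has $k\delta \ge 2$, so $(k\delta)^{-k} \le 2^{-k}$, and the tail is dominated by a geometric series. The finite initial segment is obviously bounded, so $\|f_{y,\delta,L}\|_{\vec{\tau}} < \infty$.

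Finally, since $\beta_{\delta,L}$ vanishes outside $[-L,L]$, the function $f_{y,\delta,L}$ vanishes outside the cube $K_{y,L}$, so it belongs to $\mathcal{C}_0^{\vec{\tau}}(K_{y,L})$ as required. There is no real obstacle here — the lemma is essentially a bookkeeping statement verifying that the weights $\tau_k$ were chosen precisely so that smooth bump functions of this product form lie in the Banach space; the mild subtlety is simply checking that the series $\sum (k\delta)^{-k}$ converges for every fixed $\delta>0$, which is handled by the elementary comparison above.
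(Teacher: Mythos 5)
Your proof is correct and follows essentially the same route as the paper: differentiate the product factor by factor, apply the bound $|\beta_{\delta,L}^{(k)}|\leq \delta^{-k}a_k$ to get $|\nabla^k f_{y,\delta,L}|\leq \delta^{-k}a_k^n$, and observe that the weights $\tau_k=(a_k^nk^k)^{-1}$ reduce the norm to $1+\sum_{k\geq 1}(k\delta)^{-k}<\infty$. You merely spell out two details the paper leaves implicit (the monotonicity of $a_k$ and the convergence of the series), which is fine.
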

\begin{proof}
For $\alpha = (\alpha_1, \cdots, \alpha_n)$,
$\partial^\alpha f_{y,\delta,L}(x) = \prod_{i=1}^n \beta^{(\alpha_i)}_{\delta,L}(x_i-y_i)$.
By using (\ref{eq: upper bound on beta_{delta,L}}),
\[ |\partial^\alpha f_{y,\delta,L}(x)|\leq \prod_{i=1}^n (\delta^{-\alpha_i}a_{\alpha_i})
                                      \leq \delta^{-|\alpha|} a_{|\alpha|}^n.\]
Hence $|\nabla^k f_{y,\delta,L}(x)| = \max_{|\alpha|=k}|\partial^\alpha f_{y,\delta,L}(x)|\leq \delta^{-k}a_k^n$.
Therefore 
\[ \sum_{k\geq 0}\tau_k \sup_{x\in \mathbb{R}^n}|\nabla^kf_{y,\delta,L}(x)| 
   \leq 1 + \sum_{k\geq 1}(a_k^n k^k)^{-1}\delta^{-k}a_k^n
   = 1 + \sum_{k\geq 1}(k\delta)^{-k} < \infty.\]
Thus $\norm{f_{y,\delta,L}}_{\vec{\tau}} < \infty$.
\end{proof}
\begin{lemma}  \label{lemma: C^tau is dense in L^r}
For any open set $U\subset \mathbb{R}^n$ and $1\leq r<\infty$, 
the space $\mathcal{C}^{\vec{\tau}}_0(U)$ is dense in $L^r(U)$.
\end{lemma}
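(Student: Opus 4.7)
The plan is to exhibit a dense subclass of $L^r(U)$ which we already know lies in $\mathcal{C}^{\vec{\tau}}_0(U)$, namely linear combinations of the bump functions $f_{y,\delta,L}$ constructed just before the statement. By Lemma \ref{lemma: bunmp functions are contained in C^tau}, each $f_{y,\delta,L}$ with $\overline{K_{y,L}}\subset U$ lies in $\mathcal{C}^{\vec{\tau}}_0(K_{y,L})\subset \mathcal{C}^{\vec{\tau}}_0(U)$, so it suffices to show that finite linear combinations of such $f_{y,\delta,L}$ are dense in $L^r(U)$.

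First I would reduce to approximating indicator functions of open cubes compactly contained in $U$. Given $g\in L^r(U)$, extend it by zero to $\mathbb{R}^n$ and recall the standard fact that simple functions supported on bounded measurable subsets of $U$ are dense in $L^r(U)$; moreover, by outer regularity of Lebesgue measure, any bounded measurable set $E\Subset U$ can be approximated in $L^r$-norm (via its characteristic function) by a finite disjoint union of open cubes $K_{y_i,L_i}$ with $\overline{K_{y_i,L_i}}\subset U$. This is a routine application of the Vitali/Besicovitch covering lemma or simply of the fact that $U$ can be written as an increasing countable union of such cubes. Thus it is enough to approximate each such $1_{K_{y,L}}$ in $L^r$ by an element of $\mathcal{C}^{\vec{\tau}}_0(U)$.

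Second, for fixed $y,L$ with $\overline{K_{y,L}}\subset U$, the paper already records that $f_{y,\delta,L}\to 1_{K_{y,L}}$ in $L^r(\mathbb{R}^n)$ as $\delta\to 0$, which is immediate from $0\leq f_{y,\delta,L}\leq 1_{K_{y,L}}$, pointwise convergence away from $\partial K_{y,L}$, and dominated convergence. Combining the two reductions, for any $\varepsilon>0$ one obtains a finite linear combination $\sum_i c_i f_{y_i,\delta_i,L_i}\in \mathcal{C}^{\vec{\tau}}_0(U)$ within $\varepsilon$ of $g$ in $L^r(U)$.

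There is no real obstacle here; the only point requiring a little care is that the approximating cubes must be \emph{compactly} contained in $U$ (so that their characteristic limits and the corresponding $f_{y,\delta,L}$ vanish outside $U$ and thus live in $\mathcal{C}^{\vec{\tau}}_0(U)$, not merely in $\mathcal{C}^{\vec{\tau}}(\mathbb{R}^n)$). This is ensured either by writing $U$ as the increasing union $U=\bigcup_N U_N$ with $\overline{U_N}\subset U$ and $\overline{U_N}$ compact, replacing $g$ by $g\,1_{U_N}$ (which converges to $g$ in $L^r$ by dominated convergence), and then approximating $g\,1_{U_N}$ by simple functions supported in $U_N$, after which the covering-by-cubes step automatically yields cubes inside $U$.
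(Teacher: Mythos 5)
Your proposal is correct and follows essentially the same route as the paper: reduce to indicator functions of finite-measure sets, use outer regularity and the Vitali covering theorem to approximate by a finite disjoint union of open cubes inside $U$, and then approximate each cube's indicator by the bump functions $f_{y,\delta,L}$ via Lemma \ref{lemma: bunmp functions are contained in C^tau}. The only cosmetic difference is your extra care about compact containment of the cubes, which is not strictly needed since $f_{y,\delta,L}$ already vanishes outside the open cube $K_{y,L}$.
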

\begin{proof}
It is enough to prove that for any $\varepsilon >0$ and any measurable set $E\subset U$ with $\vol(E)<\infty$
there exists $f\in \mathcal{C}^{\vec{\tau}}_0(U)$ satisfying 
$\norm{f-1_E}_{L^r(\mathbb{R}^n)} < \varepsilon$.

There is an open set $V\subset U$ satisfying $E\subset V$ and $\vol(V\setminus E) < (\varepsilon/4)^r$.
By Vitali's covering theorem, there are open cubes
$K_i = K_{y_i, L_i} \subset V$ $(i=1, 2, \cdots, N)$ such that $K_i\cap K_j=\emptyset$ $(i\neq j)$ and 
$\vol(E\setminus \bigsqcup_{i=1}^N K_i) < (\varepsilon/4)^r$.
Then 
\[ \norm{1_E -\sum_{i=1}^N 1_{K_i}}_{L^r} 
   \leq \left( \vol(E\setminus \bigsqcup_{i=1}^N K_i) + \vol(V\setminus E)\right)^{1/r} \leq \varepsilon/2.\]
From Lemma \ref{lemma: bunmp functions are contained in C^tau}, 
there are $f_i \in \mathcal{C}^{\vec{\tau}}_0(K_i)\subset \mathcal{C}_0^{\vec{\tau}}(U)$ $(i=1, \cdots,N)$ satisfying 
$\norm{1_{K_i}-f_i}_{L^r} < \varepsilon/2^{i+1}$.
Then 
\[ \norm{1_E-\sum_{i=1}^Nf_i}_{L^r} \leq \norm{1_E-\sum_{i=1}^N1_{K_i}}_{L^r} 
   + \sum_{i=1}^N \norm{1_{K_i} -f_i}_{L^r} < \varepsilon.\]
\end{proof}
Let us go back to our infinite connected sum space 
$X = Y^{\sharp \mathbb{Z}}$.
Take two non-empty pre-compact open sets $U$ and $V$ in $X$ such that 
$\overline{U}\subset V$ and $V$ is diffeomorphic to $\mathbb{R}^4$.
We fix a diffeomorphism between $V$ and $\mathbb{R}^4$ (i.e. a coordinate chart on $V$).
Moreover we fix bundle trivializations of $\Lambda^+$ and $\Lambda^-$ over $V$.
Here $\Lambda^+$ and $\Lambda^-$ are the vector bundles of self-dual and anti-self-dual $2$-forms 
with respect to $g_0$.
Then a bundle map $\mu: \Lambda^-|_V\to \Lambda^+|_V$ over $V$ can be identified with a matrix-valued function in $\mathbb{R}^4$
by using the coordinate chart on $V$ and the bundle trivializations of $\Lambda^+|_V$ and $\Lambda^-|_V$.
So we can consider its norm $\norm{\mu}_{\vec{\tau}}$.
We define a function space $\mathcal{C}^{\vec{\tau}}_0(U, \mathrm{Hom}(\Lambda^-, \Lambda^+))$
as the set of all $\mathcal{C}^\infty$-bundle maps $\mu: \Lambda^-|_V\to \Lambda^+|_V$ satisfying 
$\norm{\mu}_{\vec{\tau}}<\infty$ and $\mu_x =0$ for all $x\in V\setminus U$.
From Lemmas \ref{lemma: separability of Floer's function space} and \ref{lemma: C^tau is dense in L^r}, 
we get the following.
\begin{lemma} \label{lemma: separability and density of Floer's function space}
The Banach space $\mathcal{C}_0^{\vec{\tau}}(U, \mathrm{Hom}(\Lambda^-, \Lambda^+))$ is separable, and it is
dense in $L^r(U, \mathrm{Hom}(\Lambda^-, \Lambda^+))$ $(1\leq r<\infty)$.
\end{lemma}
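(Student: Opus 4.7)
The plan is to reduce everything to the scalar statements already proved in Lemmas \ref{lemma: separability of Floer's function space} and \ref{lemma: C^tau is dense in L^r}. Via the fixed diffeomorphism $V\cong \mathbb{R}^4$ and the fixed smooth trivializations of $\Lambda^+|_V$ and $\Lambda^-|_V$, any $\mathcal{C}^\infty$-bundle map $\mu:\Lambda^-|_V\to \Lambda^+|_V$ is identified with a smooth $3\times 3$ matrix-valued function $(\mu_{ij})_{1\le i,j\le 3}$ on $\mathbb{R}^4$. Because $U$ is pre-compact in $X$ with $\overline{U}\subset V$, its image in $\mathbb{R}^4$ under the chart is a bounded open set, so the scalar lemmas apply to each coordinate $\mu_{ij}$. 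Under this identification the Banach space $\mathcal{C}^{\vec{\tau}}_0(U,\mathrm{Hom}(\Lambda^-,\Lambda^+))$ is isomorphic (with equivalent norm) to the $9$-fold product $\prod_{i,j}\mathcal{C}^{\vec{\tau}}_0(U)$, and $L^r(U,\mathrm{Hom}(\Lambda^-,\Lambda^+))$ is isomorphic with equivalent norm to $\prod_{i,j}L^r(U)$.

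First I would establish separability. A finite product of separable Banach spaces is separable, so $\prod_{i,j}\mathcal{C}^{\vec{\tau}}_0(U)$ is separable by Lemma \ref{lemma: separability of Floer's function space}, and hence so is $\mathcal{C}^{\vec{\tau}}_0(U,\mathrm{Hom}(\Lambda^-,\Lambda^+))$.

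Next I would establish density. Given $\mu\in L^r(U,\mathrm{Hom}(\Lambda^-,\Lambda^+))$ with components $\mu_{ij}\in L^r(U)$ and given $\varepsilon>0$, Lemma \ref{lemma: C^tau is dense in L^r} supplies $f_{ij}\in \mathcal{C}^{\vec{\tau}}_0(U)$ with $\norm{f_{ij}-\mu_{ij}}_{L^r(U)}<\varepsilon/9$. Reassembling the $(f_{ij})$ via the trivialization yields an element of $\mathcal{C}^{\vec{\tau}}_0(U,\mathrm{Hom}(\Lambda^-,\Lambda^+))$ which approximates $\mu$ in $L^r$ within $\mathrm{const}\cdot \varepsilon$, where the constant comes only from the norm-equivalence between the bundle $L^r$-norm and the componentwise $L^r$-norm.

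There is essentially no obstacle; the only point to verify is the equivalence of norms in the reduction step. This reduces to checking that the smooth trivializations of $\Lambda^{\pm}|_V$ together with the fixed coordinate chart induce transition functions which are smooth and non-degenerate on the compact set $\overline{U}$. Their uniform bounds on $\overline{U}$ (and uniform bounds on all derivatives on $\overline{U}$) force the $\vec{\tau}$-norm and $L^r$-norm on matrix-valued sections to be equivalent, componentwise, to the corresponding sums of scalar norms, which completes the reduction to the scalar case.
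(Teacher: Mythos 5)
Your proposal is correct and follows essentially the same route as the paper, which simply asserts that the lemma follows from Lemmas \ref{lemma: separability of Floer's function space} and \ref{lemma: C^tau is dense in L^r} via the fixed chart and trivializations; your componentwise reduction and norm-equivalence check just make that implicit step explicit (indeed, the $\vec{\tau}$-norm on bundle maps is \emph{defined} through the trivialization, so only the $L^r$-comparison on the pre-compact $\overline{U}$ needs the equivalence argument).
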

Since $\mu\in \mathcal{C}^{\vec{\tau}}_0(U,\mathrm{Hom}(\Lambda^-,\Lambda^+))$ vanishes outside of $U$ and 
$\overline{U}\subset V$, $\mu$ can be smoothly extended all over $X$ by zero.
By this extension, we consider that all $\mu\in \mathcal{C}_0^{\vec{\tau}}(U, \mathrm{Hom}(\Lambda^-,\Lambda^+))$
are defined over $X$.

\subsection{Metric perturbation} \label{subsection: metric perturbation}
Recall that $X=Y^{\sharp \mathbb{Z}}$ is the infinite connected sum space with the periodic metric $g_0$
and the weight function $W = e^{\alpha |q(x)|'}$, 
and that $E=X\times SU(2)$ is the product principal $SU(2)$-bundle on $X$.
In this subsection we suppose that $0<\alpha<1$ and
the condition (\ref{eq: condition on T for the linear theory})
in Section \ref{section: linear theory} holds.
Therefore we can use the results proved in Section \ref{section: linear theory}.

Let $A_0$ be an adapted connection on $E$.
We define $\mathcal{A}=\mathcal{A}_{A_0}$ as the set of connections $A=A_0+a$ with 
$a\in L^{2,W}_3(X, \Lambda^1(\ad E))$
(Section \ref{subsection: the image of D_A is closed}).
Note that the definition of the Sobolev space $L^{2,W}_3(X, \Lambda^1(\ad E))$ uses
the connection $A_0$.
Let $\mathcal{C}\subset \mathcal{C}_0^{\vec{\tau}}(U, \mathrm{Hom}(\Lambda^-,\Lambda^+))$
be the set of all $\mu\in  \mathcal{C}_0^{\vec{\tau}}(U, \mathrm{Hom}(\Lambda^-,\Lambda^+))$
satisfying $|\mu_x| <1$ for all $x\in X$.
Here the norm $|\mu_x|$ is defined by using the metric $g_0$.
$\mathcal{C}$ is an open set in $\mathcal{C}^{\vec{\tau}}_0(U, \mathrm{Hom}(\Lambda^-,\Lambda^+))$.
Each $\mu\in \mathcal{C}$ defines a conformal structure which coincides with $[g_0]$ outside $U$
(see Corollary \ref{cor: description of conformal structures, manifold case}).
A connection $A$ on $E$ is said to be $\mu$-ASD if it satisfies $F_A^+ = \mu(F_A^-)$
where $F^+_A$ and $F^-_A$ are the self-dual and anti-self-dual parts of $F_A$ with respect to $g_0$.
\begin{lemma} \label{lemma: elementary properties of parametrized moduli space}
(i) For any $A\in \mathcal{A}$ we have 
$\int_X tr(F_A\wedge F_A) = \int_X tr(F_{A_0}\wedge F_{A_0})$.

\noindent
(ii) If $A_0$ is not equivalent to a flat connection as an adapted connection,
then any $A\in \mathcal{A}$ is not flat.

\noindent 
(iii) If $A_0$ is equivalent to a flat connection as an adapted connection and if 
$A\in \mathcal{A}$ is $\mu$-ASD for some $\mu\in \mathcal{C}$, then $A$ is flat.

\noindent 
(iv) If $\int_X tr(F_{A_0}\wedge F_{A_0}) < 0$, then for any $\mu\in \mathcal{C}$ there is no 
$\mu$-ASD connection in $\mathcal{A}$.
\end{lemma}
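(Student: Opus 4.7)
All four parts will follow once (i) is in hand: then (ii) is a direct application of Proposition \ref{prop: classification of adapted connections}, while (iii) and (iv) combine (i) with the standard Chern--Weil sign calculation for $\mu$-ASD curvatures. So the main task is to prove (i). My plan is to use the path $A_s := A_0 + sa$ ($s\in[0,1]$), where $A = A_0 + a$ with $a\in L^{2,W}_3(X,\Lambda^1(\ad E))$. Differentiating and applying the Bianchi identity gives
\[ \tfrac{d}{ds}\mathrm{tr}(F_{A_s}\wedge F_{A_s}) = 2\,d\,\mathrm{tr}(a\wedge F_{A_s}), \]
so $\mathrm{tr}(F_A\wedge F_A)-\mathrm{tr}(F_{A_0}\wedge F_{A_0}) = d\eta$ with $\eta := 2\int_0^1 \mathrm{tr}(a\wedge F_{A_s})\,ds$. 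The crucial analytic input is pointwise exponential decay: applying the Sobolev embedding $L^2_3\hookrightarrow\mathcal{C}^1$ uniformly on the periodic ends of $X$ to $Wa\in L^2_3$ gives $|a(x)|, |\nabla a(x)| \leq \const\cdot e^{-\alpha|q(x)|}$ for $|q(x)|\gg 1$. Since $F_{A_0}$ is compactly supported and the connection matrix of $A_0$ is bounded on the ends, this propagates to exponential decay of $\eta$. With the standard cutoff $\beta_R$ (equal to $1$ on $|q|\leq R$, to $0$ on $|q|\geq 2R$, with $|d\beta_R|\leq 2/R$), Stokes' theorem gives
\[ \int_X \beta_R\,d\eta = -\int_X d\beta_R\wedge\eta, \]
the right-hand side is bounded by $\tfrac{\const}{R}\cdot\vol(\supp d\beta_R)\cdot\sup_{\supp d\beta_R}|\eta|\leq \const\cdot e^{-\alpha R}\to 0$, and the left-hand side converges to $\int_X d\eta$, establishing (i).

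Given (i), part (ii) is immediate: any flat $A\in\mathcal{A}$ is itself an adapted connection with $\frac{1}{8\pi^2}\int_X\mathrm{tr}(F_A^2)=0$, so (i) and Proposition \ref{prop: classification of adapted connections} make $A_0$ equivalent to $A(0)$, which is gauge equivalent to the product flat connection $\rho$ (since $u_0$ is null-homotopic), contradicting the hypothesis. For (iii) and (iv), the decomposition $\mathrm{tr}(F_A\wedge F_A)=(|F_A^-|^2-|F_A^+|^2)\,d\vol$ combined with $F_A^+=\mu(F_A^-)$ and the uniform bound $\|\mu\|_\infty<1$ (continuity of $|\mu_x|<1$ on the compact support of $\mu$) yields
\[ \int_X\mathrm{tr}(F_A\wedge F_A) \geq (1-\|\mu\|_\infty^2)\int_X|F_A^-|^2\,d\vol \geq 0, \]
with equality if and only if $F_A\equiv 0$. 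In (iii) the hypothesis on $A_0$ makes the right-hand side of (i) vanish, forcing $F_A=0$; in (iv) the strict negativity hypothesis contradicts this nonnegativity, precluding any $\mu$-ASD connection in $\mathcal{A}$.

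The main obstacle is the pointwise exponential decay of $a$ underpinning (i). The needed uniformity of the Sobolev embedding on the ends of $X$ is automatic from the periodic structure, but this is the one analytic step that requires genuine care; once it is in place, everything else is short and formal.
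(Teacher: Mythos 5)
Your treatment of (ii), (iii) and (iv) matches the paper's: (ii) is the contrapositive of the paper's one-line deduction from (i) and Proposition \ref{prop: classification of adapted connections}, and for (iii)--(iv) the paper uses the same Chern--Weil sign computation (pointwise, via $tr(F_A^2)=(|F_A^-|^2-|\mu(F_A^-)|^2)d\vol\geq 0$, rather than through the global constant $1-\norm{\mu}_\infty^2$, but this is immaterial). For (i) your route differs: the paper simply observes that the claim holds for compactly supported smooth $a$ by the explicit transgression $tr(F(A_0+a)^2)-tr(F(A_0)^2)=d\,tr(2a\wedge F(A_0)+a\wedge d_{A_0}a+\tfrac23 a^3)$ plus Stokes, and leaves the reduction to that case (density of compactly supported forms in $L^{2,W}_3$ and $L^{2,W}_3$-continuity of $A\mapsto \int tr(F_A^2)$) implicit. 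Your direct cutoff argument makes that limiting step explicit, which is a legitimate and arguably more self-contained alternative.

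However, there is one step in your proof of (i) that fails as written: the Sobolev embedding $L^2_3\hookrightarrow\mathcal{C}^1$ does \emph{not} hold in dimension $4$ (one needs $k-n/2>m$, and $3-2=1$ is only borderline for $m=1$; indeed $L^2_2(\mathbb{R}^4)\not\hookrightarrow L^\infty$, so $\nabla a\in L^2_2$ gives no pointwise bound). Consequently the claimed pointwise decay $|\nabla a(x)|\leq \const\cdot e^{-\alpha|q(x)|}$, and hence the bound on $\sup_{\supp d\beta_R}|\eta|$ (which involves $d_{A_0}a$ through $F_{A_s}$), is not justified. The repair is routine and does not change your structure: estimate the boundary term by Cauchy--Schwarz instead of by a sup bound, namely
\[ \left|\int_X d\beta_R\wedge\eta\right|\leq \frac{\const}{R}\,\norm{a}_{L^2(\supp d\beta_R)}\left(\norm{F_{A_0}}_{L^2}+\norm{d_{A_0}a}_{L^2}+\norm{a}_{L^4}^2\right), \]
and note that $\norm{a}_{L^2(\supp d\beta_R)}\leq e^{-\alpha R}\norm{a}_{L^{2,W}}\to 0$ since $W\geq e^{\alpha R}$ there, while the second factor is finite because $a\in L^{2,W}_3\subset L^2_3$ (as $W\geq 1$). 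With that substitution your proof of (i) is complete; the convergence $\int\beta_R\,d\eta\to\int d\eta$ is fine since $F_A, F_{A_0}\in L^2$ gives $d\eta\in L^1$.
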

\begin{proof}
(i)
It is enough to prove that for any compact-supported smooth $a\in \Omega^1(\ad E)$ we have 
$\int_X tr (F(A_0+a)^2) = \int_X tr (F(A_0)^2)$.
Since we have $tr (F(A_0+a)^2) -tr (F(A_0)^2) = d \, tr(2a\wedge F(A_0) + a\wedge d_{A_0}a + \frac{2}{3}a^3)$,
it follows from Stokes' theorem.

\noindent
(ii) Since $A_0$ is not equivalent to the flat connection as an adapted connection,
the integral $\int_X tr(F(A_0)^2)$ is not equal to zero.
(See Proposition \ref{prop: classification of adapted connections}.)
Hence the result follows from (i).

\noindent 
(iii) 
If $A\in \mathcal{A}$ is $\mu$-ASD, then 
$tr(F_A^2) = (|F_A^-|^2 -|\mu(F_A^-)|^2) d\vol$ ($d\vol$ is the volume form with respect to $g_0$).
We have $|F_A^-|^2 -|\mu(F_A^-)|^2\geq 0$, and moreover if $F_A$ is not zero at $x\in X$ then 
$|F_A^-|^2 -|\mu(F_A^-)|^2> 0$ at $x\in X$.
If $A_0$ is equivalent to the flat connection, then $\int_X tr(F(A_0)^2) =0$.
Hence if $A\in \mathcal{A}$ is $\mu$-ASD then 
\[ \int_X (|F^-_A|^2 -|\mu(F_A^-)|^2) d\vol =0.\]
Therefore $F_A=0$ all over $X$.
We can prove (iv) by a similar argument.
\end{proof}
We define $\mathcal{M} \subset \mathcal{A}\times \mathcal{C}$ by 
\[ \mathcal{M} := \{(A, \mu)\in \mathcal{A}\times \mathcal{C}|\, \text{$A$ is $\mu$-ASD}\}.\]
Let $\pi :\mathcal{M}\to \mathcal{C}$ be the projection.
The main purpose of this subsection is to prove the following proposition
by using the metric perturbation technique originally due to 
Freed-Uhlenbeck \cite{Freed-Uhlenbeck}.
\begin{proposition} \label{prop: generic metric theorem}
Suppose that $b_+(Y)\geq 1$ and $b_-(Y)=0$ and that $A_0$ is not equivalent to a flat connection 
as an adapted connection.
Then $\pi(\moduli)$ is of first category in $\mathcal{C}$.
\end{proposition}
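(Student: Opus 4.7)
I would invoke the Sard--Smale variant Proposition~\ref{prop: a version of Sard-Smale's theorem}, applied after projecting the infinite-dimensional Kuranishi obstruction onto a sufficiently large finite-dimensional subspace. Since $\ker D_{A_*}$ is finite-dimensional (Proposition~\ref{prop: closedness of the image of the linearlization}) while $\ker D'_{A_*}$ is infinite-dimensional (Proposition~\ref{prop: conclusion of the linear theory}), the index inequality $\dim\ker<\dim\mathrm{Coker}$ required by that variant will hold automatically on a projected problem; the real content is a transversality statement for variations of $\mu$.

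Fix $(A_*,\mu_*)\in\moduli$. By Lemma~\ref{lemma: elementary properties of parametrized moduli space}(ii) $A_*$ is not flat, and Corollary~\ref{corollary: non-existence of reducible instantons} then forces $A_*$ to be irreducible. Combining the Coulomb slice of Lemma~\ref{lemma: Coulom gauge} with the Hodge decomposition of Proposition~\ref{prop: conclusion of the linear theory}(ii), an implicit function theorem argument produces neighborhoods $U_1\subset\ker D_{A_*}$ of $0$ and $U_2\subset\mathcal{C}$ of $\mu_*$, a smooth map $b:U_1\times U_2\to H$ valued in the $L^{2,W}$-orthogonal complement $H$ of $\ker D_{A_*}$ inside $\ker d_{A_*}^{*,W}\cap L^{2,W}_3$, and a smooth Kuranishi map
\[
\Psi:U_1\times U_2\longrightarrow \ker(d_{A_*}^{*,W}(1+\mu_*^*)),\qquad
\Psi(h,\mu)=P\bigl(F_{A_*+h+b(h,\mu)}^+ - \mu(F_{A_*+h+b(h,\mu)}^-)\bigr),
\]
where $P$ denotes $L^{2,W}$-orthogonal projection onto the obstruction; its zero set parametrizes $\moduli$ modulo gauge near $(A_*,\mu_*)$. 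Differentiation at $(0,\mu_*)$ yields $d_1\Psi=0$ (any $h\in\ker D_{A_*}$ lies in the slice and is killed by $d_{A_*}^+-\mu_* d_{A_*}^-$) and $d_2\Psi(\dot\mu)=-P(\dot\mu(F_{A_*}^-))$. For any finite-dimensional $V\subset \ker(d_{A_*}^{*,W}(1+\mu_*^*))$ with $\dim V>\dim\ker D_{A_*}$, set $\Psi_V:=P_V\circ\Psi$, where $P_V$ is orthogonal projection onto $V$. Then $d_1\Psi_V=0$ is Fredholm between finite-dimensional spaces with $\dim\ker<\dim\mathrm{Coker}$, and surjectivity of $d_2\Psi_V:T_{\mu_*}\mathcal{C}\to V$ is exactly the transversality claim $(\star)$ below, applied to a basis of $V$. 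Proposition~\ref{prop: a version of Sard-Smale's theorem} applied to $\Psi_V$ then makes $\{\mu:\exists h,\,\Psi_V(h,\mu)=0\}$ of first category near $\mu_*$, and the implication $\Psi=0\Rightarrow\Psi_V=0$ passes this to the local contribution to $\pi(\moduli)$. Second-countability of $\mathcal{A}$ (affine in the separable Hilbert space $L^{2,W}_3$) and of $\mathcal{C}$ (by Lemma~\ref{lemma: separability and density of Floer's function space}) lets one cover $\moduli$ by countably many such Kuranishi charts, so $\pi(\moduli)$ is a countable union of first-category sets, hence of first category.

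The main obstacle is the transversality claim
\[
(\star)\quad \forall\,\eta\in\ker(d_{A_*}^{*,W}(1+\mu_*^*))\setminus\{0\},\ \exists\,\dot\mu\in\mathcal{C}_0^{\vec{\tau}}(U,\mathrm{Hom}(\Lambda^-,\Lambda^+)): \ \int_X W^2\langle\eta,\dot\mu(F_{A_*}^-)\rangle\,d\vol\neq 0.
\]
I would argue by contradiction. Density of $\mathcal{C}_0^{\vec{\tau}}$ in $L^r$ (Lemma~\ref{lemma: separability and density of Floer's function space}) extends the vanishing to every $\dot\mu\in L^r(U,\mathrm{Hom}(\Lambda^-,\Lambda^+))$, and localizing $\dot\mu$ to arbitrarily small balls--together with smoothness of $\eta$ (elliptic regularity, Proposition~\ref{prop: conclusion of the linear theory}) and of $F_{A_*}$ (from $L^{2,W}_3\hookrightarrow C^1$)--yields the pointwise identity $\langle\eta(x),T(F_{A_*}^-(x))\rangle_{\ad E}=0$ for every $x\in U$ and every linear $T:\Lambda_x^-\to\Lambda_x^+$; equivalently, the images of $\eta(x)$ and $F_{A_*}^-(x)$, viewed as linear maps into $\ad E_x$, are orthogonal in $\ad E_x$. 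At any $x_0\in U$ where $F_{A_*}^-(x_0):\Lambda^-_{x_0}\to\ad E_{x_0}$ is surjective, this forces $\eta(x_0)=0$. By standard gauge-theoretic arguments using irreducibility of $A_*$ together with Aronszajn unique continuation applied to the elliptic system satisfied by $F_{A_*}$ (Bianchi plus the $\mu_*$-ASD relation), the rank-$3$ locus of $F_{A_*}^-$ is open, nonempty, and meets $U$, so $\eta$ vanishes on an open subset of $U$. Finally, Aronszajn-type unique continuation applied to the first-order elliptic system $D'_{A_*}(0,\eta)=0$ propagates $\eta\equiv 0$ to all of $X$, contradicting $\eta\neq 0$ and proving $(\star)$.
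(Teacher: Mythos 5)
Your overall architecture — fix $(A_*,\mu_*)$, cut the infinite-dimensional obstruction $\ker(d_{A_*}^{*,W}(1+\mu_*^*))$ down to a finite-dimensional $V$ with $\dim V>\dim\ker D_{A_*}$, apply the Sard--Smale variant to a map with negative-index first partial derivative, and conclude by second countability — matches the paper's, up to the cosmetic difference that the paper applies Proposition \ref{prop: a version of Sard-Smale's theorem} directly to the truncated map $f(b,\nu)=\Pi\{F^+(A+b)-\nu(F^-(A+b))\}$ on the Coulomb slice rather than to a fully reduced Kuranishi map $\Psi_V$. The transversality statement $(\star)$ you isolate is exactly the surjectivity of $\Pi_H\circ df_{(0,\mu)}$ in the paper. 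But your proof of $(\star)$ has a genuine gap at its final step.

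From the vanishing of $\int_X W^2\langle\eta,\dot\mu(F_{A_*}^-)\rangle$ for all $\dot\mu$ you correctly deduce (via density and smoothness) that the images of $(F_{A_*}^-)_x$ and $\eta_x$ in $(\ad E)_x$ are orthogonal for every $x\in U$. You then claim that the rank-$3$ locus of $F_{A_*}^-$ is open, nonempty, and meets $U$, attributing this to ``standard gauge-theoretic arguments using irreducibility.'' No such argument exists: irreducibility controls the holonomy algebra generated by curvature transported along paths, not the pointwise rank of $F_{A_*}^-$, which can be $\le 2$ (even $\le 1$) on all of $U$ — and even if the rank-$3$ locus were nonempty somewhere on $X$, nothing forces it to meet the fixed perturbation region $U$. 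This is precisely the difficulty the Freed--Uhlenbeck argument is built to handle. The correct route (the paper's): since $(\ad E)_x$ is $3$-dimensional, orthogonality of the images gives $\min(\mathrm{rank}(F_{A_*}^-)_x,\mathrm{rank}(\eta_x))\le 1$ on $U$; unique continuation shows neither $F_{A_*}^-$ nor $\eta$ vanishes on a nonempty open set, so there is a nonempty open $\mathcal{O}\subset U$ on which one of them has rank exactly $1$; the rank-one lemma (Sublemma \ref{sublemma: metric perturbation and reducibility}, i.e.\ \cite[Lemma (4.3.25)]{Donaldson-Kronheimer}), applied to whichever of $(1+\mu)F_{A_*}^-$ or $(1-\mu^*)W^2\eta$ is $d_{A_*}$-closed and of rank one, forces $A_*$ to be reducible, hence flat by Corollary \ref{corollary: non-existence of reducible instantons}, contradicting Lemma \ref{lemma: elementary properties of parametrized moduli space}(ii). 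Without this rank-one reducibility step your contradiction does not go through.
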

In the rest of this subsection we always assume that $b_+(Y)\geq 1$ and $b_-(Y)=0$ and 
that $A_0$ is not equivalent to a flat connection as an adapted connection.

Fix $(A, \mu)\in \mathcal{M}$.
Let $\mathrm{Ker}d_A^{*,W}\cap L^{2,W}_3$ be the space of $b\in L^{2,W}_3(X, \Lambda^1(\ad E))$
satisfying $d_A^{*,W}b=0$.
Let $\mathrm{Ker}D_A$ be the space of $b\in L^{2,W}_4(X, \Lambda^1(\ad E))$ satisfying 
$D_A b = -d_A^{*,W}b+(d_A^+-\mu d_A^-)b=0$, and 
$\mathrm{Ker}(d_A^{*,W}(1+\mu^*))$ be the space of $\eta\in L^{2, W}_4(X, \Lambda^+(\ad E))$ satisfying 
$d_A^{*,W}(1+\mu^*)\eta=0$.
$\mathrm{Ker}D_A$ is finite dimensional (Proposition \ref{prop: closedness of the image of the linearlization}), and 
$\mathrm{Ker}(d_A^{*,W}(1+\mu^*))$ is infinite dimensional (Proposition \ref{prop: conclusion of the linear theory}).
Hence we can take a finite dimensional sub-vector space $H\subset \mathrm{Ker}(d_A^{*,W}(1+\mu^*))$ satisfying 
$\dim H > \dim \mathrm{Ker} D_A$.
Let $H'\subset \mathrm{Ker}(d_A^{*,W}(1+\mu^*))$ be the $L^{2,W}$-orthogonal complement of $H$ in 
$\mathrm{Ker}(d_A^{*,W}(1+\mu^*))$.
Since $\mathrm{Ker}(d_A^{*,W}(1+\mu^*))$ is closed in $L^{2,W}(X, \Lambda^+(\ad E))$,
$H'$ is a closed subspace in $L^{2,W}(X, \Lambda^+(\ad E))$.

The spaces $(d_A^+-\mu d_A^-)(\mathrm{Ker}d_A^{*,W}\cap L^{2, W}_3)$, $H$ and $H'$ are
closed subspaces in $L^{2,W}_2(X, \Lambda^+(\ad E))$, and they are
$L^{2,W}$-orthogonal to each other (Proposition \ref{prop: conclusion of the linear theory} (ii)).
Moreover, from Proposition \ref{prop: conclusion of the linear theory} (ii), 
\begin{equation} \label{eq: decomposition of L^{2,W}_2 into three components}
 L^{2,W}_2(X, \Lambda^+(\ad E)) = (d_A^+-\mu d_A^-)(\mathrm{Ker}d_A^{*,W}\cap L^{2, W}_3)\oplus H \oplus H'.
\end{equation}
Let $\Pi: L^{2,W}_2(X, \Lambda^+(\ad E))\to (d_A^+-\mu d_A^-)(\mathrm{Ker}d_A^{*,W}\cap L^{2,W}_3)\oplus H$ be the 
projection with respect to this decomposition.
We define 
\begin{equation} \label{eq: truncated parametrized ASD equation}
 \begin{split}
 f: (\mathrm{Ker}d_A^{*,W}\cap L^{2, W}_3)\times \mathcal{C} &\to (d_A^+-\mu d_A^-)(\mathrm{Ker}d_A^{*,W}\cap L^{2,W}_3)\oplus H,\\
 (b, \nu) &\mapsto \Pi\{F^+(A+b) -\nu(F^-(A+b))\}.
 \end{split}
\end{equation}
We have $f(0, \mu)=0$.
The derivative of $f$ at $(0, \mu)$ is given by 
\begin{equation} \label{eq: the derivative of f in the metric perturbation}
 \begin{split}
 df_{(0,\mu)}: (\mathrm{Ker}d_A^{*,W}\cap L^{2,W}_3)\oplus \mathcal{C}_0^{\vec{\tau}}(U, \mathrm{Hom}(\Lambda^-,\Lambda^+))
  &\to  (d_A^+-\mu d_A^-)(\mathrm{Ker}d_A^{*,W}\cap L^{2, W}_3)\oplus H, \\
   (b, \nu)&\mapsto (d_A^+-\mu d_A^-)b -\Pi(\nu(F^-_A)). 
 \end{split}
\end{equation}
\begin{lemma}  \label{lemma: metric perturbation}
(i) The map (\ref{eq: the derivative of f in the metric perturbation}) is surjective.

\noindent 
(ii) The partial derivative $d_1 f_{(0,\nu)}:\mathrm{Ker}d_A^{*,W}\cap L^{2,W}_3
\to (d_A^+-\mu d_A^-)(\mathrm{Ker}d_A^{*,W}\cap L^{2,W}_3)\oplus H$,
$b\mapsto (d_A^+-\mu d_A^-)b$, with respect to $(\mathrm{Ker}d_A^{*,W}\cap L^{2, W}_3)$-direction is a Fredholm operator with 
its index $<0$.
\end{lemma}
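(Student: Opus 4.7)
The plan is to dispatch (ii) first, since it amounts to reading off information from the linear theory in Section \ref{section: linear theory}, and then to address (i), whose new content is a Freed--Uhlenbeck--type genericity step.

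For (ii), I will identify $\mathrm{Ker}(d_1 f_{(0,\mu)})$ with $\mathrm{Ker}\,D_A$: a $b\in \mathrm{Ker}\,d_A^{*,W}\cap L^{2,W}_3$ lies in the kernel precisely when $(d_A^+-\mu d_A^-)b=0$, which together with the Coulomb condition says $D_A b=0$; the regularity half of Proposition \ref{prop: closedness of the image of the linearlization} then upgrades $b$ to $L^{2,W}_4$. Since $\mathrm{Ker}\,D_A$ is finite-dimensional by the same proposition, so is the kernel. The image is by construction the first summand $(d_A^+-\mu d_A^-)(\mathrm{Ker}\,d_A^{*,W}\cap L^{2,W}_3)$, which is closed in $L^{2,W}_2(X,\Lambda^+(\ad E))$ by Proposition \ref{prop: conclusion of the linear theory}(ii), so the cokernel is $H$. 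Thus $d_1 f_{(0,\mu)}$ is Fredholm of index $\dim\mathrm{Ker}\,D_A-\dim H$, which is strictly negative by our choice of $H$.

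For (i), the $b$-direction alone surjects onto the first summand, so it remains to show that $\nu\mapsto -\Pi(\nu(F_A^-))$ hits every element of the $H$-summand. Since $H$ is finite-dimensional, by duality this is equivalent to the claim that no nonzero $\eta\in H$ satisfies
\[ \int_X W^2\langle \nu(F_A^-),\eta\rangle\, d\vol=0 \]
for all $\nu\in \mathcal{C}_0^{\vec{\tau}}(U,\mathrm{Hom}(\Lambda^-,\Lambda^+))$. By the density assertion in Lemma \ref{lemma: separability and density of Floer's function space} the orthogonality extends to all $L^2$-sections $\nu$ supported in $U$, which forces the pointwise identity $\langle \nu_x(F_A^-(x)),\eta(x)\rangle=0$ for every $x\in U$ and every $\nu_x\in \mathrm{Hom}(\Lambda^-_x,\Lambda^+_x)$. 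A short linear-algebra calculation (expand $F_A^-$ and $\eta$ in orthonormal bases of $\Lambda^\mp_x$ and $\ad E_x$) reformulates this as the pointwise orthogonality of the images of $F_A^-(x)$ and $\eta(x)$, viewed as linear maps into the three-dimensional space $\ad E_x$.

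To finish, I invoke two unique-continuation inputs. Lemma \ref{lemma: elementary properties of parametrized moduli space}(ii) shows $A$ is not flat, while $b_-(Y)=0$ combined with Corollary \ref{corollary: non-existence of reducible instantons} prevents $A$ from being reducible, so $A$ is irreducible and non-flat. The classical Freed--Uhlenbeck argument (compare Donaldson--Kronheimer \S 4.3) then provides a dense open subset on which $F_A^-$ has maximal rank $3$; in particular, at some $x_0\in U$ the image $\mathrm{Im}\,F_A^-(x_0)$ fills $\ad E_{x_0}$, which via the pointwise orthogonality forces $\eta(x_0)=0$, and then by continuity $\eta\equiv 0$ on $U$. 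Since $\eta$ also satisfies the elliptic equation $D_A'(0,\eta)=d_A^{*,W}(1+\mu^*)\eta=0$, Aronszajn-type unique continuation (applied for instance to the second-order operator $D_A D_A'$ which annihilates $(0,\eta)$) propagates $\eta=0$ to all of $X$, contradicting $\eta\neq 0$. The main obstacle is the full-rank step, which requires combining the local algebraic fact that an irreducible $SU(2)$-ASD connection with $F_A^-$ of rank $\leq 2$ on an open set reduces to a $U(1)$-subbundle with unique continuation for the perturbed ASD equation; both ingredients are intrinsically local, so they transfer to the non-compact weighted setting without modification.
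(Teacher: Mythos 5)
Part (ii) and the first half of your argument for (i) are correct and essentially identical to the paper's: the $b$-direction covers the first summand, finite-dimensionality of $H$ lets you dualize, the density of $\mathcal{C}^{\vec{\tau}}_0(U,\mathrm{Hom}(\Lambda^-,\Lambda^+))$ in $L^2$ upgrades the integral orthogonality to the pointwise statement that the images of $(F_A^-)_x$ and $\eta_x$ in the three-dimensional fibre $(\ad E)_x$ are orthogonal for all $x\in U$. The gap is in your last step. You assert that irreducibility of $A$ produces a dense open set on which $F_A^-$ has maximal rank $3$, citing as an ingredient a ``local algebraic fact'' that rank $\leq 2$ of $F_A^-$ on an open set forces reducibility. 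No such fact is available: the Freed--Uhlenbeck/Donaldson--Kronheimer input (\cite[Lemma (4.3.25)]{Donaldson-Kronheimer}, Sublemma \ref{sublemma: metric perturbation and reducibility} in the paper) yields reducibility only from a \emph{rank-one} section $\phi$ of $\Lambda^{\mp}(\ad E)$ satisfying a first-order closedness identity $d_A(1\pm\mu^{(*)})\phi=0$; it says nothing about rank $2$, and ``the curvature of an irreducible instanton has full rank on a dense set'' is not a theorem you can quote. So your contradiction never gets off the ground in the case where $F_A^-$ has rank $2$ on an open subset of $U$.

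The correct way to close the argument uses the orthogonality symmetrically in $F_A^-$ and $\eta$. Orthogonal images in a $3$-dimensional space give $\mathrm{rank}(F_A^-)_x+\mathrm{rank}(\eta_x)\leq 3$, hence $\min(\mathrm{rank}(F_A^-)_x,\mathrm{rank}(\eta_x))\leq 1$ at every $x\in U$. Unique continuation (together with Lemma \ref{lemma: elementary properties of parametrized moduli space} (ii)) shows that neither $F_A^-$ nor $\eta$ can vanish on a non-empty open set, so there is a non-empty open $\mathcal{O}\subset U$ on which \emph{one of the two} has rank exactly one. One then applies the rank-one sublemma to whichever it is: to $F_A^-$ using the Bianchi identity $d_A(1+\mu)F_A^-=d_AF_A=0$, or to $W^2\eta$ using $d_A\bigl((1-\mu^*)W^2\eta\bigr)=0$, which is exactly the condition $\eta\in H\subset\mathrm{Ker}(d_A^{*,W}(1+\mu^*))$ (note the weight $W^2$ must be inserted here). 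Either way $A$ is reducible, hence flat by Corollary \ref{corollary: non-existence of reducible instantons} (this is where $b_-(Y)=0$ enters), contradicting non-flatness. The case your argument cannot handle --- $\eta$ of rank one while $F_A^-$ has rank $2$ or $3$ --- is precisely the case requiring the sublemma for $\eta$ rather than for the curvature. (A minor additional slip: even granting full rank of $F_A^-$ at a single $x_0$, ``$\eta(x_0)=0$, and then by continuity $\eta\equiv 0$ on $U$'' is a non sequitur; you would need the full-rank locus to be dense in $U$.)
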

\begin{proof}
The statement (ii) is obvious because $\mathrm{Ker}D_A$ and $H$ are both finite dimensional and satisfy 
$\dim \mathrm{Ker} D_A < \dim H$.

Next we will show (i) by using the argument of
Donaldson-Kronheimer \cite[p. 154]{Donaldson-Kronheimer}.
Let $\Pi_H:L^{2,W}_2(X, \Lambda^+(\ad E))\to H$ be projection to $H$ 
with respect to the decomposition (\ref{eq: decomposition of L^{2,W}_2 into three components}).
It is enough for the proof of (i) to show that the map 
$\Pi_H\circ df_{(0,\mu)}: (\mathrm{Ker}d_A^{*,W}\cap L^{2, W}_3)\oplus 
\mathcal{C}_0^{\vec{\tau}}(U, \mathrm{Hom}(\Lambda^-,\Lambda^+))\to H$ is surjective.
Here we have $\Pi_H\circ df_{(0,\mu)}(b,\nu)=-\Pi_H(\nu(F_A^-))$.

Suppose that it is not surjective.
Since $H$ is finite dimensional, this implies that there exists a non-zero $\eta\in H$ satisfying 
$\langle \eta, \nu(F_A^-)\rangle_{L^{2,W}}=0$ for all 
$\nu\in \mathcal{C}_0^{\vec{\tau}}(U, \mathrm{Hom}(\Lambda^-,\Lambda^+))$.
(Here $\langle \cdot, \cdot\rangle_{L^{2,W}}$ is the $L^{2,W}$-inner product.)
This is equivalent to 
$\langle F_A^-\cdot \eta, \nu\rangle_{L^{2,W}}=0$ for 
$\nu\in \mathcal{C}_0^{\vec{\tau}}(U, \mathrm{Hom}(\Lambda^-,\Lambda^+))$.
Here $F_A^-\cdot \eta \in \Gamma(\Lambda^-\otimes \Lambda^+)$ is the contraction of 
$F_A^-\otimes \eta \in \Gamma(\Lambda^-(\ad E)\otimes \Lambda^+(\ad E))$ by the 
inner product of $\ad E$, and we identify $\Lambda^-\otimes\Lambda^+$
with $\mathrm{Hom}(\Lambda^-,\Lambda^+)$ by the metric $g_0$.
Since $\mathcal{C}_0^{\vec{\tau}}(U, \mathrm{Hom}(\Lambda^-,\Lambda^+))$ is dense
in $L^2(U, \mathrm{Hom}(\Lambda^-,\Lambda^+))$ (Lemma \ref{lemma: separability and density of Floer's function space}),
the above means that $F_A^-\cdot \eta=0$ over $U$.
Then for every point $x\in U$,
the images of the maps
\[ (F_A^-)_x:(\Lambda^-)_x^*\to (\ad E)_x , \quad 
    \eta_x: (\Lambda^+)^*_x \to (\ad E)_x,\]
are orthogonal to each other.
Since the rank of $\ad E$ is equal to $\dim su(2) = 3$, this implies that 
$\min (\mathrm{rank}(F_A^-)_x, \mathrm{rank}(\eta_x))\leq 1$
for every $x\in U$.
Then we use the following sublemma. 
This is \cite[Lemma (4.3.25)]{Donaldson-Kronheimer}.
\begin{sublemma} \label{sublemma: metric perturbation and reducibility}
Let $\mathcal{O} \subset X$ be an non-empty open set.
Suppose that one of the following conditions (i), (ii) is satisfied.
Then $A$ is reducible over $X$.
 
\noindent 
(i) There is $\phi\in \Gamma(\mathcal{O}, \Lambda^-(\ad E))$ such that $\phi$ has rank $1$ over $\mathcal{O}$
(as a map from $(\Lambda^-)^*$ to $\ad E$) and $d_A(1+\mu)\phi=0$ over $\mathcal{O}$.

\noindent
(ii) There is $\phi\in \Gamma(\mathcal{O}, \Lambda^+(\ad E))$ such that $\phi$ has rank $1$ over $\mathcal{O}$ 
(as a map from $(\Lambda^+)^*$ to $\ad E$) and $d_A(1-\mu^*)\phi=0$ over $\mathcal{O}$.
\end{sublemma}
\begin{proof}
We assume the condition (i). The case (ii) can be proved in the same way.
By making $\mathcal{O}$ smaller, we can assume that 
$\phi= s\otimes \omega$ where $s\in \Gamma(\mathcal{O}, \ad E)$ and $\omega\in \Gamma(\mathcal{O},\Lambda^-)$
with $|s|=1$.
Here $\omega$ is not zero at any point of $\mathcal{O}$.
$d_A(1+\mu)\phi=d_A(s\otimes (1+\mu)\omega)=0$ implies
\[ d_As\wedge (1+\mu)\omega + s\otimes d(1+\mu)\omega=0.\]
Since $|s|=1$, we have $0=d(s,s)= 2(d_As, s)$.
From this and the above equation, we get
$d_As\wedge (1+\mu)\omega=0$.
Since $\omega\in \Omega^-$ and $\mu(\omega)\in \Omega^+$,
\[ |d_As\wedge \omega| = \frac{1}{\sqrt{2}}|d_As| |\omega|, \quad
   |d_As\wedge \mu(\omega)| = \frac{1}{\sqrt{2}}|d_As| |\mu(\omega)|.\]
Since $|\mu(\omega)| < |\omega|$, $d_As\wedge (1+\mu)\omega=0$ implies 
$d_As=0$.
This shows that $A$ is reducible over $\mathcal{O}$.
Since $X$ is simply-connected and $A$ is $\mu$-ASD, 
the unique continuation principle (\cite[Lemma (4.3.21)]{Donaldson-Kronheimer}) 
implies that $A$ is reducible over $X$.
\end{proof}
We have $d_A (1+\mu)F_A^- = d_AF_A=0$ and $d_A ((1-\mu^*)W^2\eta)=0$
since $\eta\in H\subset \mathrm{Ker}(d_A^{*,W}(1+\mu^*))$.
If $F^-_A$ is zero on some non-empty open set, then $A$ is flat on it.
Then the unique continuation principle (\cite[pp. 150-152]{Donaldson-Kronheimer}, \cite{Agmon-Nirenberg},
\cite[p. 248, Remark 3]{Aronszajn}) implies that $A$ is flat all over $X$.
But this contradicts Lemma \ref{lemma: elementary properties of parametrized moduli space} (ii) 
because $A_0$ is not equivalent to a flat connection as an adapted connection.
Therefore $F_A^-$ cannot vanish on any non-empty open set. 
The unique continuation principle also
implies that $\eta$ cannot vanish on any non-empty open set.
(Note that $(1-\mu^*)W^2\eta$ is self-dual with respect to the 
conformal structure corresponding to $\mu$.)

Since we have $\min(\mathrm{rank}(F_A^-)_x, \mathrm{rank}(\eta_x))\leq 1$ for every $x\in U$,
there is a non-empty open set $\mathcal{O}\subset U$ such that one of 
$F_A^-$, $\eta$ has rank $1$ over $\mathcal{O}$.
Then one of the conditions (i), (ii) in Sublemma \ref{sublemma: metric perturbation and reducibility}
is satisfied.
Thus $A$ is reducible on $X$.
Then, from Corollary \ref{corollary: non-existence of reducible instantons},
$A$ is flat over $X$.
But this contradicts Lemma \ref{lemma: elementary properties of parametrized moduli space} (ii).
\end{proof}
$\mathrm{Ker}d_A^{*,W}\cap  L^{2,W}_3$ and 
$\mathcal{C}\subset \mathcal{C}^{\vec{\tau}}(U, \mathrm{Hom}(\Lambda^-,\Lambda^+))$ are both separable 
(see Lemma \ref{lemma: separability and density of Floer's function space}) and hence 
second countable. 
Therefore we can apply Proposition \ref{prop: a version of Sard-Smale's theorem} to the map $f$ in 
(\ref{eq: truncated parametrized ASD equation})
and conclude that there exists an open neighborhood 
$\mathcal{U}\times \mathcal{U}'$ of $(0,\mu)$ in $(\mathrm{Ker}d_A^{*,W}\cap L^{2,W}_3)\times \mathcal{C}$
such that the set 
$\{\nu\in \mathcal{U}'|\, \exists b\in \mathcal{U}: f(b, \nu)=0\}$ is of first category in $\mathcal{C}$.
\begin{lemma} \label{lemma: local generic metric theorem}
There exists an open neighborhood $\mathcal{V}$ of $(A,\mu)$ in $\mathcal{M}$ 
such that $\pi(\mathcal{V})$ is of first category in $\mathcal{C}$.
\end{lemma}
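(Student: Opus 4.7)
The plan is to transfer the first-category conclusion just obtained for the truncated equation $f(b,\nu)=0$ back to the full moduli space $\mathcal{M}$ by means of a Coulomb slice around $A$. From Proposition \ref{prop: a version of Sard-Smale's theorem} we already have open neighborhoods $\mathcal{U}\subset \mathrm{Ker}\,d_A^{*,W}\cap L^{2,W}_3$ of $0$ and $\mathcal{U}'\subset \mathcal{C}$ of $\mu$ such that the set
\[ S:=\{\,\nu\in\mathcal{U}'\mid \exists\, b\in\mathcal{U},\ f(b,\nu)=0\,\} \]
is of first category in $\mathcal{C}$. It therefore suffices to construct an open $\mathcal{V}\subset\mathcal{M}$ containing $(A,\mu)$ with $\pi(\mathcal{V})\subset S$.

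The technical heart of the argument is a local slice theorem in the weighted setting. Let $\mathcal{G}$ denote a small neighborhood of the identity in the group of gauge transformations $u:E\to E$ with $u-1\in L^{2,W}_4$, and consider
\[ \Psi:\mathcal{G}\times\bigl(\mathrm{Ker}\,d_A^{*,W}\cap L^{2,W}_3\bigr)\to \mathcal{A},\qquad (u,b)\mapsto u(A+b). \]
By the multiplication rule (\ref{eq: multiplication rule}), the right-hand side is smooth in the relevant weighted Sobolev topologies, and its derivative at $(1,0)$ sends $(\xi,b)$ to $-d_A\xi+b$, which is an isomorphism by Lemma \ref{lemma: Coulom gauge}. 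The implicit function theorem then shows that $\Psi$ is a local diffeomorphism near $(1,0)$, so after shrinking $\mathcal{U}$ if necessary we may assume that every $A'\in\mathcal{A}$ sufficiently $L^{2,W}_3$-close to $A$ admits a unique presentation $A'=u(A+b)$ with $u\in\mathcal{G}$ and $b\in\mathcal{U}$.

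With this slice in place, I take $\mathcal{V}\subset\mathcal{M}$ to be a neighborhood of $(A,\mu)$ so small that the decomposition above applies to its first factor and the second factor of every point of $\mathcal{V}$ lies in $\mathcal{U}'$. Because the parameter $\nu$ acts only on the form factor $\Lambda^{\pm}$ while $u$ acts only on $\ad E$, the two actions commute, so $\nu(u F^-(A+b)u^{-1})=u\,\nu(F^-(A+b))\,u^{-1}$ and hence $A'$ is $\nu$-ASD if and only if $A+b$ is. For any $(A',\nu)\in\mathcal{V}$ the associated $b\in\mathcal{U}$ therefore satisfies $F^+(A+b)-\nu(F^-(A+b))=0$, and a fortiori $f(b,\nu)=\Pi\{F^+(A+b)-\nu(F^-(A+b))\}=0$; so $\nu\in S$ and $\pi(\mathcal{V})\subset S$, which is of first category. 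The main thing to check carefully is that the slice construction really does go through in the weighted space $L^{2,W}_3$ around $A=A_0+a$ (which is only of finite regularity and not compact-supported); this rests entirely on Lemma \ref{lemma: Coulom gauge} (whose hypothesis that $A$ be $\mu$-ASD is satisfied here) together with the multiplication rule (\ref{eq: multiplication rule}), both already established.
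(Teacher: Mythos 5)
Your proposal is correct and follows essentially the same route as the paper: a Coulomb slice $(u,b)\mapsto e^u(A+b)$ whose derivative $(u,b)\mapsto -d_Au+b$ is an isomorphism by Lemma \ref{lemma: Coulom gauge}, the inverse function theorem to put every nearby connection in the form $u(A+b)$ with $b\in\mathcal{U}$, and gauge invariance of the $\nu$-ASD equation to conclude $f(b,\nu)=0$ and hence $\pi(\mathcal{V})\subset S$. No further comment is needed.
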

\begin{proof}
Consider the following map (Coulomb gauge):
\[ L^{2,W}_4(X,\Lambda^0(\ad E))\times (\mathrm{Ker}d_A^{*,W}\cap L^{2,W}_3)\to \mathcal{A},\quad 
   (u,b)\mapsto e^u(A+b).\]
The derivative of this map at $(0,0)$ is given by 
\[ L^{2,W}_4(X,\Lambda^0(\ad E))\oplus (\mathrm{Ker}d_A^{*,W}\cap L^{2,W}_3)\to L^{2,W}_3(X, \Lambda^1(\ad E)),\quad 
   (u, b)\mapsto -d_A u + b.\]
This is isomorphic (Lemma \ref{lemma: Coulom gauge}).
Therefore, by the inverse mapping theorem, there is an open neighborhood $\mathcal{W}$ of $A$ in $\mathcal{A}$ such that 
for any $B\in \mathcal{W}$ there are $u\in L^{2,W}_4(X, \Lambda^0(\ad E))$ and 
$b\in \mathcal{U} \subset (\mathrm{Ker}d_A^{*,W}\cap L^{2,W}_3)$ satisfying 
$B=e^u(A+b)$.
Set $\mathcal{V} := (\mathcal{W}\times \mathcal{U}') \cap \mathcal{M}$.
Then $\pi(\mathcal{V})$ is contained in the set $\{\nu\in \mathcal{U}'|\, \exists b\in \mathcal{U}: f(b, \nu)=0\}$,
which is of first category in $\mathcal{C}$.
\end{proof}
Since $\mathcal{M}\subset \mathcal{A}\times \mathcal{C}$ is second countable, 
Lemma \ref{lemma: local generic metric theorem} implies 
Proposition \ref{prop: generic metric theorem}.

\section{Proof of Theorem \ref{thm: main theorem}}\label{section: proof of the main theorem}
We will prove Theorem \ref{thm: main theorem} in this section.
So we assume $b_-(Y)=0$ and $b_+(Y)\geq 1$.
We fix $0< \alpha <1$. (For example, $\alpha=1/2$ will do.)
We choose a positive parameter $T$ so that
\[ T > \max\left(T_\alpha, T_{-\alpha}, \frac{4}{1-\alpha}\right).\]
This implies
\[ T > 4, \quad T\geq \max(T_\alpha, T_{-\alpha}), \quad 1-4/T > \alpha.\]
Recall that we assumed $T > 4$ in Section \ref{subsection: exponential decay} and 
$T\geq \max(T_\alpha, T_{-\alpha})$ in Sections \ref{section: linear theory} and \ref{subsection: metric perturbation}.
The condition $1-4/T>\alpha$ is related to 
Corollary \ref{cor: existence of adapted connection to which instanton converges exponentially}.
We will show that there is a complete Riemannian metric $g$ on $X$ satisfying the 
conditions (a) and (b) in Theorem \ref{thm: main theorem}.

Let $A(m)$ $(m\in \mathbb{Z})$ be adapted connections on $E$ introduced in 
Section \ref{subsection: classification of adapted connections}.
They satisfy $\int_X tr(F(A(m))^2) =8\pi^2 m$.
$A(0)$ is equivalent to a flat connection as an adapted connection. 
$\{A(m)|\, m\in \mathbb{Z}\}$ 
becomes a complete system of representatives of equivalence classes of adapted connections on $E$.
(See Proposition \ref{prop: classification of adapted connections}.)
We define $\mathcal{A}_m$ as the set of all connections $A(m) +a$ such that $a\in L^2_{3,loc}(X,\Lambda^1(\ad E))$  
satisfies $\nabla_{A(m)}^ka\in L^{2,W}$ for $0\leq k\leq 3$.
We set 
\[ \mathcal{M}_m := \{(A, \mu)\in \mathcal{A}_m\times \mathcal{C}|\, \text{$A$ is $\mu$-ASD}\}.\]
Here $\mathcal{C}$ is the space of 
$\mu\in \mathcal{C}_0^{\vec{\tau}}(U, \mathrm{Hom}(\Lambda^-,\Lambda^+))$ satisfying 
$|\mu_x| <1$ $(x\in X)$ as in Section \ref{subsection: metric perturbation}.
If $m<0$, then $\mathcal{M}_m$ is empty by Lemma \ref{lemma: elementary properties of parametrized moduli space} (iv).
$(A, \mu)\in \mathcal{M}_0$ if and only if $A$ is flat 
by Lemma \ref{lemma: elementary properties of parametrized moduli space} (iii).

Let $\pi_m:\mathcal{M}_m \to \mathcal{C}$ be the natural projection.
Then $\bigcup_{m\geq 1}\pi_m(\mathcal{M}_m)$ is of first category in $\mathcal{C}$
by Proposition \ref{prop: generic metric theorem}.
$\mathcal{C}$ is an open set in the Banach space 
$\mathcal{C}^{\vec{\tau}}_0(U, \mathrm{Hom}(\Lambda^-,\Lambda^+))$.
Thus, by Baire's category theorem, there exists 
$\mu\in \mathcal{C}\setminus \left(\bigcup_{m\geq 1}\pi_m(\mathcal{M}_m)\right)$.
Let $g$ be a Riemannian metric on $X$ whose conformal equivalence class corresponds to $\mu$.
(See Corollary \ref{cor: description of conformal structures, manifold case}.)
Since $\mu$ is zero outside $U$ (a pre-compact open set in $X$), 
we can choose $g$ so that it is equal to $g_0$ outside a compact set.
In particular it is a complete metric.

We want to prove that there is no non-flat instanton with respect to the metric $g$.
Suppose, on the contrary, that there exists a non-flat $g$-ASD connection $A$ on $E$ satisfying 
$\int_X |F_A|^2d\vol < \infty$.
Then by Corollary \ref{cor: existence of adapted connection to which instanton converges exponentially}
and the condition $1-4/T>\alpha$,
there is a gauge transformation $u:E\to E$ such that 
$u(A)$ is contained in some $\mathcal{A}_m$.
This means that $\mu\in \pi_m(\mathcal{M}_m)$.
Since $A$ is not flat, we have $m\geq 1$.
This contradicts the choice of $\mu$. 

We have completed all the proofs of Theorem \ref{thm: main theorem}.

%%%%%%%%%%%%%%%%%%%%%%%%%%%%%%%%%%%%%%%%%%%%%%%%%%

\vspace{10mm}

\address{ Masaki Tsukamoto \endgraf
Department of Mathematics, Faculty of Science \endgraf
Kyoto University \endgraf
Kyoto 606-8502 \endgraf
Japan
}

\textit{E-mail address}: \texttt{tukamoto@math.kyoto-u.ac.jp}

\end{document}